\newcommand{\tabstyle}{%
  \footnotesize
  \setlength{\tabcolsep}{3pt}
  \renewcommand{\arraystretch}{1.05}
}
\newcommand{\SmallDot}{\textcolor{blue}{\footnotesize\textbullet}}
\newcommand{\CDot}{\(\cdot\)}
\newtheorem{thm}{Theorem}[section]
\newtheorem{prop}{Proposition}[section]%
\newtheorem{rem}{Remark}[section]%
\newtheorem{lem}{Lemma}[section]
\numberwithin{equation}{section}
\newtheorem{defn}{Definition}[section]%
\begin{document}

\title[Original Article]{De Finetti's Control for Refracted Skew Brownian Motion}


\author[1]{\fnm{Zhongqin} \sur{Gao}}\email{zhongqingaox@126.com}

\author[1]{\fnm{Yan} \sur{Lv}}\email{lvyan1998@aliyun.com}

\author*[2]{\fnm{Xiaowen} \sur{Zhou}}\email{xiaowen.zhou@concordia.ca}

\affil[1]{\orgdiv{School of Mathematics and Statistics}, \orgname{Nanjing University of Science and Technology}, \orgaddress{\street{Xiaolingwei Street}, \city{Nanjing}, \postcode{210094}, \state{Jiangsu}, \country{China}}}

\affil*[2]{\orgdiv{Department of Mathematics and Statistics}, \orgname{Concordia University}, \orgaddress{\street{de Maisonneuve Blvd. West}, \city{Montreal}, \postcode{H3G 1M8}, \state{Quebec}, \country{Canada}}}


\abstract{In this paper we propose a refracted skew Brownian motion as a risk model with endogenous regime switching, which generalizes the refracted diffusion risk process introduced by Gerber and Shiu. We consider an optimal dividend problem for the refracted skew Brownian risk model and identify sufficient conditions, respectively, for barrier strategy, band strategy and their variants to be optimal.}


\keywords{Skew Brownian motion; Regime switching; Optimal dividend; Exit time; Barrier strategy; Band strategy; Stochastic control}


\pacs[MSC Classification]{60G40, 60J80, 93E20}

\maketitle
\thispagestyle{firstpage}

\section{Introduction}\label{sec1}
The optimal dividend problem, a cornerstone of stochastic control theory aimed at maximizing expected discounted dividend payments until ruin, has been extensively explored.
For Brownian motion with drift, the optimal dividend problem has been analyzed by \cite{Gerber2004}, yielding barrier optimality.
More generally, \cite{Shreve1984} studied optimal consumption for It\^{o} diffusions with absorbing and reflecting boundaries and a discounted penalty at absorption. \cite{Paulsen2003} analyzed optimal dividend policies for It\^{o} diffusions under solvency constraints with Lipschitz-continuous drift and diffusion coefficients. \cite{Asmussen2000} solved a joint control problem of dividends and excess-of-loss reinsurance for a diffusion process whose drift and volatility depend on the retention level. \cite{Decamps2007} examined the interaction between dividends and the timing of a growth investment in a diffusion model with piecewise-constant drift that switches endogenously at a stopping time. These diffusion-based formulations typically lead to barrier-type (or band-type, in joint control) optimal policies.

For the Cram\'{e}r-Lundberg risk process, \cite{Gerber1969} showed that the optimal dividend strategy is the so-called band strategy by discrete approximation and limiting argument, and for the particular case of exponentially distributed claim amounts, the band strategy collapses to a barrier strategy. This result was rederived by means of viscosity theory in \cite{Azcue2005}. \cite{Albrecher2008} derived the optimal dividend strategy for the Cram\'{e}r-Lundberg risk model with interest which is again of band type and for exponential claim sizes collapses to a barrier strategy.
For a more general risk process, namely the spectrally negative L\'{e}vy process (SNLP) (see \cite{Bertoin1996}), \cite{Avram2007} gave a sufficient condition involving the generator of the L\'{e}vy process for the optimality of the barrier strategy. \cite{Loeffen2008} showed that the optimal strategy is a barrier strategy if the L\'{e}vy measure has a completely monotone density. \cite{Kyprianou2009} further showed that the optimal strategy is a barrier strategy whenever the L\'{e}vy measure has a density which is log-convex.
\cite{Avram2015} identified necessary and sufficient conditions for optimality of single and impulse-band dividend strategies under a general ruin penalty, allowing for the presence or absence of fixed per-dividend transaction costs.
In addition, \cite{Avanzi2009} provided a review of dividend strategies.

Most dividend-control models are interface-free: there are no fixed thresholds that trigger a change in the dynamics (no skew or refraction). Typically, the models encode state dependence by smooth coefficients rather than threshold-driven regime switches.
However, real-world market frictions, cumulative shocks and regulatory interventions generate threshold effects that interface-free models cannot capture. This motivates a threshold-based, state-dependent specification with an interface as the driver of regime changes.

We therefore propose a skew Brownian surplus process with a two-valued drift, called refracted skew Brownian motion, that couples skew transmission at a fixed threshold with refraction. More specifically, the model introduces a fixed threshold (interface) at which the dynamics change: the drift is refracted across the threshold (yielding piecewise drift), and asymmetric transmission (skew) is implemented via a local-time term that assigns unequal up- and down-crossing tendencies.
Such a two-valued drift encodes transitions between low- and high-risk states, offering a regime-switching mechanism. Moreover, we use this framework to characterize optimal payout policies and to quantify how skew and regime-specific drifts influence the resulting optimal strategies.

We first introduce the skew Brownian motion.
Let process $X\equiv(X_t)_{t\geq0}$, defined on a filtered probability space $(\mathcal{F}, (\mathcal{F}_t)_{t\geq 0}, \mathbb{P}) $, be a solution to the following stochastic differential equation (SDE) with a singular drift.
\begin{align}\label{JD_a11}
	X_t=X_0+\int_0^{t} \sigma (X_s) \mathrm{d}B_s+\int_{\mathbb{R}} \nu(\mathrm{d} x) L^X(t, x),\ \ \ \ t\geq 0,
\end{align}
where $\sigma: \mathbb{R}\rightarrow \mathbb{R}$ is a nonnegative bounded measurable function, $B$ denotes a standard one-dimensional Brownian motion with initial value $0$, $\nu(\mathrm{d} x)$ is a bounded measure on $\mathbb{R}$, $L^X(t, x)$ is the symmetric local time at level $x$ up to time $t\in\mathbb{R}_+$ of process $X$, i.e.
\begin{align}\label{loctime}
	L^X(t,x):=\lim_{\varepsilon\to 0+}\frac{1}{2\varepsilon}\mathrm{Lebesgue}(s\leq t: |X_s-x|\leq\varepsilon).
\end{align}

SDEs of type \eqref{JD_a11} have been studied in \cite{Gall1984}, \cite{Engelbert1985} and \cite{Etore2018}, where existence and uniqueness of a strong solution is proved
under conditions such that $\sigma$ is uniformly elliptic, bounded and of finite variation, and $\nu$ has a finite mass with $|\nu(\{x\})|<1$ for any  $x\in\mathbb{R}$. More generally, a value $x$ such that $|\nu(\{x\})|=1$ corresponds to a reflection of the process over or below this point depending on the sign of $\nu(\{x\})$.
For a given real constant $\beta$ and Dirac measure $\delta_0$ at $0$, when $\sigma\equiv1$ and $\nu(\mathrm{d} x)=\beta \delta_0(\mathrm{d}x)$, we obtain
\begin{align}\label{JD_a22}
	X_t=X_0+B_t+\beta L^X(t, 0),\ \ \ \ t\geq 0,
\end{align}
the so called skew Brownian motion  that was initially studied in \cite{Ito1965} and \cite{Walsh1978}. It is shown in \cite{Harrison1981}  that SDE \eqref{JD_a22} has a unique strong solution if and only if $\left| \beta \right|\leq1$ where $\left| \beta \right|=1$ corresponds to the reflected Brownian motion.
We refer to \cite{Lejay2006} and references therein for a review on skew Brownian motion.

Skew Brownian motion, as an extension of standard Brownian motion, introduces asymmetries in its transition probabilities and boundary behaviors, making it an useful tool for modeling systems with inherent directional biases or state-dependent interactions. It also finds many applications in mathematical finance.
\cite{Rossello2012} studied the arbitrage under skew Brownian motion.
\cite{Gairat2017} pointed it out that in  a driftless two-valued local volatility model, the underlying price, after rescaling, follows the dynamic of skew Brownian motion with two-valued drift. They further obtained formulas for pricing of European options  using joint density for the skew Brownian motion.
\cite{Alvarez2017} studied the optimal stopping  problem arising in the timing of an irreversible investment when the underlying follows a skew Brownian motion.
\cite{Hussain2023} considered the pricing of American options and the corresponding optimal stopping  problem with asset price dynamic following the Azzalini Ito-McKean skew Brownian motion, which is a specific case of skew Brownian motions represented as the sum of a standard Brownian motion and an independent reflecting Brownian motion.
\cite{Hussain2024} investigated the probabilistic distribution functions of maximum of skew Brownian motion and stock price process driven by maximum of skew Brownian motion.

Refracted Brownian motion extends the standard Brownian motion by introducing a state-dependent drift that changes at a specified threshold, making it a versatile tool for modeling surplus dynamics in risk theory. This model is particularly suited to scenarios where premium rates or other key parameters adjust based on reserve levels, and was introduced in \cite{Gerber2006} as the so called refracted diffusion risk process.
\cite{Renaud2021} showed that imposing a linear bound on the dividend rate yields refracted-diffusion surplus process and a mean-reverting optimal policy.
The refracted Brownian motion also constitutes a special case of refracted spectrally negative L\'evy processes introduced in \cite{Kyprianou2010}.
	
We now introduce the refracted skew Brownian motion.
For $\sigma\equiv 1$ and
$$\nu(\mathrm{d} x)= \beta \delta_a(\mathrm{d}x)+(\mu_+\mathbf{1}_{\{x> a\}}+\mu_-\mathbf{1}_{\{x< a\}})\mathrm{d} x,$$
where $\mu_+, \mu_-\in \mathbb{R}$, $\mathbf{1}_{\cdot}$ denotes the indicator function and $\delta_a$ denotes the Dirac measure at $a> 0$, the SDE \eqref{JD_a11} becomes
\begin{eqnarray}\label{JD_a}
	X_t=X_0+B_t+\beta L^X(t, a)+\mu_+\int_0^t \mathbf{1}_{\{X_s> a\}}\mathrm{d} s +\mu_-\int_0^t \mathbf{1}_{\{X_s< a\}}\mathrm{d} s,\ \ t\geq 0.
\end{eqnarray}
If $\beta\in(-1,1)$, then SDE \eqref{JD_a} has a unique solution (see \cite{Gall1984}) the refracted skew Brownian motion, that is the focus of this paper.
If $\beta=0$, process $X$ becomes the so called refracted diffusion risk process. We refer  to \cite{Mazzonetto2016} for numerical issues involving refracted skew Brownian motion.

The refracted skew Brownian motion can be used as a toy model of  endogenous regime-switching process  in which the process has distinct dynamics depending on whether it takes value above or below threshold $a$ where the local time term can be interpreted as the cost or reward associated to the switching of  regimes. This simple model already allows the optimality of a  class  of  barrier and band strategies together with their variants, and the fewer model parameters helps to better understand the interplay between the them. More sophisticated and realistic models with state-dependent diffusion coefficient and (or) jumps can also be proposed and studied in the future.

Our approach is a modification of previous work on optimal dividend problem in risk theory literature.
To identify the optimal strategies, we first prove the corresponding Hamilton-Jacobi-Bellman inequalities that characterize the value function.  Since barrier dividend strategies often serve as the optimal strategies for various surplus processes, we then propose different versions of barrier strategies and for each barrier strategy find an explicit expression of the associated value function.
Applying the Hamilton-Jacobi-Bellman inequalities we identify conditions for each of the barrier strategy to be optimal.

In another interesting finding, we show that a band strategy can be optimal in a broad class of settings, whether band or barrier is optimal depends on the interaction among interface skew, regime-specific drifts and other model parameters. Sufficient conditions are identified for different versions of band strategies to be optimal.
The above setup of the model allows us to analyze how the phase transition of different optimal strategies is affected by the skew and different regimes, which addresses key challenges and contributes to the broader understanding of the stochastic systems with endogenous regime switching.

The rest of the paper is arranged as follows. The two-sided exit problem is solved in Section \ref{exit_problem} for the  refracted skew Brownian risk process. The  Hamilton-Jacobi-Bellman inequalities are shown in Section \ref{HJB}. In Sections \ref{barrier_strategy} and \ref{band_strategy} we find conditions for barrier strategy and band strategy to be optimal, respectively. Numerical illustrations are provided in Section \ref{example}.
\section{Solutions to the exit problems}\label{exit_problem}
In this section, we derive  explicit expressions of the Laplace transforms of exit times for the refracted skew Brownian motion, which provides a theoretical basis for the follow-up study. The law and the expectation with respect to $X$ issued at $x\in \mathbb{R}$ are denoted as $\mathbb{P}_x$ and $\mathbb{E}_x$, respectively.
For any $y\in\mathbb{R}$, define the first hitting time for  process $X$ by
\begin{eqnarray*}
&\tau_{y}:=\inf\{t\geq 0, X_t=y\},
\end{eqnarray*}
with the convention $\inf\emptyset=\infty$.
For any $y< x<z$, define the first exit time of the interval $(y,z)$ for the process $X$ by
\begin{eqnarray*}
&\tau_{y, z}:=\tau_y\wedge \tau_{z}=\min\{\tau_{y}, \tau_{z}\}.
\end{eqnarray*}

To obtain the Laplace transforms, we first  find the general solutions $g_{1,q}(x)$ and $g_{2,q}(x)$ in $C(\mathbb{R})\cap C^{2}(\mathbb{R}\backslash \{a\})$  to the following differential equation
\begin{align}\label{g1g2g3}
&\frac{1}{2}g''(x)+\mu_+\mathbf{1}_{\{x> a\}}g'(x)+\mu_-\mathbf{1}_{\{x< a\}}g'(x)=qg(x),
\end{align}
with $(1+\beta) g'(a+)=(1-\beta)g'(a-)$ for $\beta\in (-1, 1)$ and $q\geq 0$. For $q\geq 0$, let
\begin{eqnarray}
&&g_{1,q}(x):=e^{\rho_{1}^+(x-a)} {\mathbf{1}_{\{x> a\}}}+{\Big(c_1(q)e^{\rho_{2}^-{(x-a)}}+\big(1-c_1(q)\big) e^{\rho_{1}^-{(x-a)}} \Big)}{\mathbf{1}_{\{x\leq a\}}},\label{g1}\\
&&g_{2,q}(x):=\Big(\big(1-c_2(q)\big)e^{\rho_{2}^+(x-a)}+c_2(q) e^{\rho_{1}^+(x-a)} \Big){1_{\{x> a\}}}{+e^{\rho_{2}^-(x-a)}}{1_{\{x\leq a\}}},\label{g2}
\end{eqnarray}
where both $\rho_1^-<0$ and $\rho_2^->0$ satisfy
\begin{align}\label{dfgh}
&\frac{1}{2}{\rho_i^-}^2+\mu_-\rho_i^--q=0, i=1,2,
\end{align}
and both $\rho_{1}^+<0$ and $\rho_{2}^+>0$ satisfy
\begin{align}\label{dfgh212}
&\frac{1}{2}{\rho_i^+}^2+\mu_+\rho_i^+-q=0, i=1,2,
\end{align}
i.e.
\begin{align}\label{dfgh21245}
&\rho_{1}^{\pm}:=-(\mu_{\pm}+\sqrt{\mu_{\pm}^2+2q}), \ \ \ \ \ \ \rho_{2}^{\pm}:=-\mu_{\pm}+\sqrt{\mu_{\pm}^2+2q}.
\end{align}
Since
\begin{eqnarray}\label{g1qx}
&&(1+\beta) g_{i,q}'(a+)=(1-\beta)g_{i,q}'(a-),\ \  i=1,2,
\end{eqnarray}
we have
\begin{align}\label{c1c266}
&c_1(q):=\frac{(1+\beta) \rho_{1}^+-(1-\beta)\rho_{1}^-}
{(1-\beta)(\rho_{2}^--\rho_{1}^-)},\ \ \ \ \ \
c_2(q):=\frac{(1+\beta) \rho_{2}^+-(1-\beta)\rho_{2}^-}{(1+\beta)(\rho_{2}^+-\rho_{1}^+)}.
\end{align}
In particular, for $\mu_-=\mu_+=0$, we have $c_1(q)=\frac{-\beta}{1-\beta}$ and $c_2(q)=\frac{\beta}{1+\beta}$.  The range of $c_i(q), i=1, 2$ is provided below, with the proof in Appendix \ref{c11c223}.
\begin{lem}\label{c1c201}
$c_i(q)<1$ for all $q>0$ and $i=1, 2$. Further, we have
$c_i(q)<0$ if and only if $(1+\beta) \rho_{i}^+<(1-\beta)\rho_{i}^-$.
\end{lem}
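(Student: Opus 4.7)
\smallskip
\noindent\textbf{Proof proposal.} The plan is to exploit the known signs of the roots $\rho_i^{\pm}$ in \eqref{dfgh21245} and the fact that $\beta\in(-1,1)$ so that both $1\pm\beta>0$. For $q>0$ the formulas \eqref{dfgh21245} give $\rho_1^{\pm}<0$ and $\rho_2^{\pm}>0$. In particular
\[
(1-\beta)(\rho_{2}^- - \rho_{1}^-) > 0, \qquad (1+\beta)(\rho_{2}^+ - \rho_{1}^+) > 0,
\]
so the denominators in the definition \eqref{c1c266} of $c_1(q)$ and $c_2(q)$ are strictly positive.

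The second assertion then follows immediately: since the denominator is positive, the sign of $c_i(q)$ equals the sign of its numerator, giving the stated equivalence
\[
c_i(q) < 0 \ \Longleftrightarrow\ (1+\beta)\rho_i^+ < (1-\beta)\rho_i^-, \quad i=1,2.
\]

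For the first assertion, I would clear denominators and reduce both inequalities $c_1(q)<1$ and $c_2(q)<1$ to a common simpler inequality. Using positivity of the denominator, $c_1(q)<1$ is equivalent to
\[
(1+\beta)\rho_1^+ - (1-\beta)\rho_1^- < (1-\beta)(\rho_2^- - \rho_1^-),
\]
which simplifies at once to $(1+\beta)\rho_1^+ < (1-\beta)\rho_2^-$. Similarly, $c_2(q)<1$ is equivalent to
\[
(1+\beta)\rho_2^+ - (1-\beta)\rho_2^- < (1+\beta)(\rho_2^+ - \rho_1^+),
\]
which collapses to exactly the same inequality $(1+\beta)\rho_1^+ < (1-\beta)\rho_2^-$. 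This common inequality is trivially true because its left-hand side is a positive multiple of the negative number $\rho_1^+$ and its right-hand side is a positive multiple of the positive number $\rho_2^-$.

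There is no real obstacle in this argument; the only thing to watch is the strictness of the signs of $\rho_i^{\pm}$ (which requires $q>0$, matching the hypothesis) and the strictness $|\beta|<1$ guaranteeing $1\pm\beta>0$. Both are given, so the proof should be a few lines of sign-chasing with no additional machinery needed.
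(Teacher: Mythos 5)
Your proof is correct and follows essentially the same route as the paper's: the paper computes $1-c_i(q)$ explicitly and observes that both reduce to the common positive quantity $(1-\beta)\rho_2^- -(1+\beta)\rho_1^+$ over a positive denominator, which is exactly the inequality $(1+\beta)\rho_1^+<(1-\beta)\rho_2^-$ you arrive at by clearing denominators; the sign argument for the second assertion is likewise identical.
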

\begin{thm}\label{lap 1}
For any $q>0$, $x, y, z\in\mathbb{R}$ and $x\in(y, z)$, we have
\begin{eqnarray}
 &&\mathbb{E}_{x}[e^{-q \tau_{z}}; \tau_{z}<\tau_{y}]=
 \frac{w(x,y)}{w(z,y)},\label{e1}\\
 &&\mathbb{E}_{x}[e^{-q \tau_{y}}; \tau_{y}<\tau_{z}]=\frac{w(x,z)}{w(y,z)},\label{e2}
\end{eqnarray}
where $g_{1,q}(x)$ and $g_{2,q}(x)$ are defined in \eqref{g1} and \eqref{g2}, respectively, and
\begin{eqnarray}\label{wxy}
&&w(x,y):=g_{2,q}(x)g_{1,q}(y)-g_{1,q}(x)g_{2,q}(y).
\end{eqnarray}
\end{thm}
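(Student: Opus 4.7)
The plan is to identify $g_{1,q}$ and $g_{2,q}$ as two independent $q$-harmonic functions for the refracted skew Brownian motion $X$, and then to read off the two-sided exit Laplace transforms by optional stopping combined with Cramer's rule.

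First I would show that for $i=1,2$ the process
\begin{align*}
M^{(i)}_t := e^{-qt}\, g_{i,q}(X_t),\qquad t\geq 0,
\end{align*}
is a $\mathbb{P}_x$-local martingale. Since $g_{i,q}\in C(\mathbb{R})\cap C^2(\mathbb{R}\setminus\{a\})$ but its first derivative is typically discontinuous at $a$, I would apply the Itô–Tanaka formula to $g_{i,q}(X_t)$, using the semimartingale decomposition \eqref{JD_a} of $X$ and the symmetric local time defined in \eqref{loctime}. Together with the product rule applied to $e^{-qt}g_{i,q}(X_t)$, the finite-variation part splits as
\begin{align*}
&\int_0^t e^{-qs}\Bigl(\tfrac{1}{2}g_{i,q}''(X_s)+\bigl(\mu_+\mathbf{1}_{\{X_s>a\}}+\mu_-\mathbf{1}_{\{X_s<a\}}\bigr)g_{i,q}'(X_s)-q\,g_{i,q}(X_s)\Bigr)\,\mathrm{d}s \\
&\qquad+\tfrac{1}{2}\int_0^t e^{-qs}\bigl[(1+\beta)g_{i,q}'(a+)-(1-\beta)g_{i,q}'(a-)\bigr]\,\mathrm{d}L^X(s,a).
\end{align*}
The first line vanishes because $g_{i,q}$ solves the ODE \eqref{g1g2g3} pointwise on $\mathbb{R}\setminus\{a\}$, and the coefficient in the second line vanishes by the skew boundary condition \eqref{g1qx}. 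What remains is a stochastic integral with respect to $B$, so $M^{(i)}$ is a local martingale.

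Next, since $g_{i,q}$ is continuous on the compact interval $[y,z]$ it is bounded there, and $\tau_{y,z}<\infty$ holds $\mathbb{P}_x$-a.s.\ (bounded drift plus Brownian fluctuation force exit from any bounded interval). Localization together with bounded convergence therefore yields
\begin{align*}
g_{i,q}(x)=\mathbb{E}_x\!\left[e^{-q\tau_{y,z}}\,g_{i,q}(X_{\tau_{y,z}})\right],\qquad i=1,2.
\end{align*}
Using $X_{\tau_{y,z}}\in\{y,z\}$, this is a $2\times 2$ linear system in the two unknowns $\mathbb{E}_x[e^{-q\tau_y};\tau_y<\tau_z]$ and $\mathbb{E}_x[e^{-q\tau_z};\tau_z<\tau_y]$ whose determinant is exactly $w(z,y)$; linear independence of $g_{1,q}$ and $g_{2,q}$ (visible from their differing behavior at $\pm\infty$) ensures $w(z,y)\neq 0$ for $y<z$. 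Cramer's rule yields $\mathbb{E}_x[e^{-q\tau_z};\tau_z<\tau_y]=w(x,y)/w(z,y)$, giving \eqref{e1}; swapping the roles of $y$ and $z$ gives \eqref{e2}.

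The main obstacle is the Itô–Tanaka step: one has to use the version of the formula compatible with the symmetric local time \eqref{loctime}, so that the $\beta\,\mathrm{d}L^X(s,a)$ term from \eqref{JD_a} combines with the jump of $g_{i,q}'$ at $a$ to produce precisely the coefficient $(1+\beta)g_{i,q}'(a+)-(1-\beta)g_{i,q}'(a-)$, which is then killed by the skew condition \eqref{g1qx}. Once this harmonicity identity is established, the linear-algebra step producing the formulas in terms of $w(\cdot,\cdot)$ is routine.
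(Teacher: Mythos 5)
Your proposal is correct and follows essentially the same route as the paper: establish that $e^{-qt}g_{i,q}(X_t)$ is a local martingale via the It\^{o}--Tanaka formula (with the ODE \eqref{g1g2g3} killing the drift term and the skew condition \eqref{g1qx} killing the local-time term), justify optional stopping at $\tau_{y,z}$, and solve the resulting $2\times 2$ system. The only cosmetic difference is in the optional-stopping step, where you invoke boundedness of $g_{i,q}$ on $[y,z]$ plus bounded convergence while the paper bounds the quadratic variation of the stochastic integral and uses the It\^{o} isometry; both are valid.
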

\begin{proof}
For $q>0$ and $i=1,2$, applying the It\^{o}-Tanaka-Meyer formula (c.f. \cite{Lejay2006}, Section 6, and \cite{Protter2005}, Theorem 70) to $\big(e^{-q t}g_{i,q}(X_{t})\big)_{t\geq 0}$, we have
\begin{align*}
&e^{-q t} g_{i,q}(X_{t})=g_{i,q}(x) + M_t^{(i)}
+ \big(\frac{1+\beta}{2} g_{i,q}'(a+)-\frac{1-\beta}{2}g_{i,q}'(a-)\big)\int_0^{t}e^{-q s} L^{X}(\mathrm{d}s, a)\\
& \ \ \
+\int_0^{t} e^{-q s}\big(\frac{1}{2}g_{i,q}''(X_{s-})
+ (\mu_+\mathbf{1}_{\{X_{s-}> a\}} +\mu_-\mathbf{1}_{\{X_{s-}< a\}})g_{i,q}'(X_{s-})-q g_{i,q}(X_{s-})\big)\mathrm{d}s,
\end{align*}
where $M_{t}^{(i)} := \int_0^t e^{-q s}g_{i,q}'(X_{s-})\mathrm{d} B_s$, $i = 1, 2$ are local martingales with quadratic variations $\langle M^{(i)} \rangle_t=\int_0^t \bigl(\mathrm{e}^{-qs}g_{i,q}'(X_{s})\bigr)^2\,\mathrm{d}s$.
By \eqref{g1g2g3} and \eqref{g1qx}, these reduce to $e^{-q t} g_{i,q}(X_{t})
=g_{i,q}(x)+ M_t^{(i)}$, and then $e^{-qt}g_{i,q}(X_t)$ are local martingales.

Let $t_n := \inf\{t\geq 0: |X_t|\geq n\}$. Since $g_{i,q}'$ is continuous on the compact set $[y,z]\cap [-n, n]$, it is bounded there: $\sup_{x\in[y, z]\cap[-n, n]} |g_{i,q}'(x)| := C_n<\infty$.
Then,
\begin{align*}
  \mathbb{E}[\langle M^{(i)} \rangle_{t_n \wedge \tau_{y,z}}]
  = \mathbb{E}\big[\int_0^{t_n\wedge \tau_{y,z}} \bigl(\mathrm{e}^{-qs}g_{i,q}'(X_{s})\bigr)^2\,\mathrm{d}s \big]
  \le C_n^2 \, \frac{1 - e^{-2q t_n}}{2q} \le \frac{C_n^2}{2q} < \infty.
\end{align*}
By It\^{o} isometry (c.f. \cite{Karatzas1991}, Proposition 3.2.10), the stochastic integrals $M_{t_n \wedge \tau_{y,z}}^{(i)}$ are square-integrable, therefore there are martingales. Equivalently, let $n\uparrow \infty$, for $x\in(y,z)$, we have
\[g_{1,q}(x)=\mathbb{E}_x [e^{-q(\tau_y\wedge \tau_z)}g_{1,q}(X_{\tau_y\wedge \tau_z})]
=g_{1,q}(y)\mathbb{E}_x [e^{-q\tau_y}; \tau_y<\tau_z]+g_{1,q}(z)\mathbb{E}_x [e^{-q\tau_z}; \tau_z<\tau_y], \]
\[g_{2,q}(x)=\mathbb{E}_x [e^{-q(\tau_y\wedge \tau_z)}g_{2,q}(X_{\tau_y\wedge \tau_z})]
=g_{2,q}(y)\mathbb{E}_x [e^{-q\tau_y}; \tau_y<\tau_z]+g_{2,q}(z)\mathbb{E}_x [e^{-q\tau_z}; \tau_z<\tau_y]. \]
The results in \eqref{e1}-\eqref{e2} are derived by solving the above system of equations.
\end{proof}

Note that $\mathbb{E}_{x}[e^{-q \tau_{y, z}}]=\mathbb{E}_{x}[e^{-q \tau_{y}}; \tau_{y}<\tau_{z}]+\mathbb{E}_{x}[e^{-q \tau_{z}}; \tau_{z}<\tau_{y}]$.
\begin{thm}\label{lap 2}
For any $q>0$, $x, y, z\in\mathbb{R}$ and $x\in(y, z)$, we have
\begin{align}\label{e3}
\mathbb{E}_{x}[e^{-q \tau_{y, z}}]
=\frac{w(x,y)-w(x,z)}{w(z,y)}.
\end{align}
In addition, $\mathbb{E}_{y}[e^{-q \tau_{y, z}}]=1$ and $\mathbb{E}_{z}[e^{-q \tau_{y, z}}]=1$.
\end{thm}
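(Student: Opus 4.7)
The plan is to read off \eqref{e3} almost immediately from Theorem \ref{lap 1}, since the pre-statement remark already records the decomposition
\[
\mathbb{E}_{x}[e^{-q\tau_{y,z}}]
=\mathbb{E}_{x}[e^{-q\tau_{y}};\tau_{y}<\tau_{z}]
+\mathbb{E}_{x}[e^{-q\tau_{z}};\tau_{z}<\tau_{y}],
\]
valid because $\tau_{y,z}=\tau_{y}\wedge\tau_{z}$ and, since the diffusion coefficient $\sigma\equiv 1$ is nondegenerate, the event $\{\tau_{y}=\tau_{z}\}$ has $\mathbb{P}_x$-probability zero for $x\in(y,z)$. Thus the problem reduces to substituting the two formulas \eqref{e1} and \eqref{e2} into the above sum.

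The one key algebraic observation I would highlight is the antisymmetry
\[
w(y,z)=g_{2,q}(y)g_{1,q}(z)-g_{1,q}(y)g_{2,q}(z)=-w(z,y),
\]
which is immediate from the definition \eqref{wxy}. Using it, the second summand $w(x,z)/w(y,z)$ rewrites as $-w(x,z)/w(z,y)$, and the two fractions combine over the common denominator $w(z,y)$ to give exactly \eqref{e3}. Note that this step tacitly requires $w(z,y)\neq 0$ for $y<z$, which is guaranteed because $g_{1,q}$ and $g_{2,q}$ are linearly independent solutions of \eqref{g1g2g3} satisfying the interface condition at $a$; otherwise one of the one-sided Laplace transforms in Theorem \ref{lap 1} would already be ill-defined.

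For the boundary identities $\mathbb{E}_y[e^{-q\tau_{y,z}}]=\mathbb{E}_z[e^{-q\tau_{y,z}}]=1$ I would give two quick justifications. The probabilistic one is that under $\mathbb{P}_y$ we have $\tau_y=0$ and hence $\tau_{y,z}=0$, and symmetrically under $\mathbb{P}_z$. The analytic check is that $w(y,y)=w(z,z)=0$ by definition of $w$, so substituting $x=y$ into \eqref{e3} yields $-w(y,z)/w(z,y)=1$ and substituting $x=z$ yields $w(z,y)/w(z,y)=1$; this consistency check is worth recording because it confirms that the formula extends continuously to the endpoints.

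There is essentially no obstacle: once Theorem \ref{lap 1} is in hand, the proof of Theorem \ref{lap 2} is a two-line algebraic manipulation together with the antisymmetry of $w$. The only point one must be slightly careful about is the well-definedness of the quotient on the boundary of $(y,z)$, which is handled by the continuity remark above.
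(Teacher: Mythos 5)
Your proposal is correct and follows exactly the route the paper intends: the paper gives no separate proof of Theorem~\ref{lap 2} beyond the displayed decomposition $\mathbb{E}_{x}[e^{-q \tau_{y, z}}]=\mathbb{E}_{x}[e^{-q \tau_{y}}; \tau_{y}<\tau_{z}]+\mathbb{E}_{x}[e^{-q \tau_{z}}; \tau_{z}<\tau_{y}]$, into which one substitutes \eqref{e1}--\eqref{e2} and uses the antisymmetry $w(y,z)=-w(z,y)$. Your additional remarks on $w(z,y)\neq 0$ and the endpoint checks are correct but not needed beyond what the paper already assumes.
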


Taking the limit as $z\uparrow\infty$ or $y\downarrow-\infty$ in \eqref{e3}, we obtain the following theorem.
\begin{thm}\label{lap 3}
Given $q>0$ and $x,r\in\mathbb{R}$, we have for any $x\geq r$,
\begin{eqnarray}\label{e4}
 \mathbb{E}_{x}[e^{-q \tau_{r}}]=e^{\rho_1^+ (x-r)}\mathbf{1}_{\{r> a\}}+
 \frac{g_{1,q}(x)}{c_1(q)e^{\rho_2^-(r-a)}+\big{(}1-c_1(q)\big{)}e^{\rho_1^-(r-a)}}
 \mathbf{1}_{\{r\leq a\}},
\end{eqnarray}
and for $x<r$,
\begin{eqnarray}\label{e5}
 \mathbb{E}_{x}[e^{-q \tau_{r}}]=e^{\rho_2^- (x-r)}\mathbf{1}_{\{r\leq a\}}+
 \frac{g_{2,q}(x)}{\big{(}1-c_2(q)\big{)}e^{\rho_2^+(r-a)}+c_2(q)e^{\rho_1^+(r-a)}}
 \mathbf{1}_{\{r> a\}}.
\end{eqnarray}
\end{thm}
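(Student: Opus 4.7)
The plan is to deduce both identities by letting an auxiliary barrier recede to $\pm\infty$ in the one-sided exit formulas \eqref{e1}--\eqref{e2}. For $x\geq r$, take $y=r$ in \eqref{e2} to obtain $\mathbb{E}_x[e^{-q\tau_r};\tau_r<\tau_z]=w(x,z)/w(r,z)$, then let $z\uparrow\infty$. Since $X$ is continuous and nonexploding, $\tau_z\uparrow\infty$ almost surely, so $\{\tau_r<\tau_z\}\uparrow\{\tau_r<\infty\}$ and dominated convergence (with integrand bounded by $1$) yields $\mathbb{E}_x[e^{-q\tau_r}]$ in the limit, under the usual convention $e^{-q\cdot\infty}=0$. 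The mirror argument using \eqref{e1} with $z=r$ and $y\downarrow-\infty$ handles the case $x<r$.

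The analytical core is to evaluate $\lim_{z\to\infty}w(x,z)/w(r,z)$ and $\lim_{y\to-\infty}w(x,y)/w(r,y)$. From \eqref{g1}--\eqref{g2} together with the sign pattern $\rho_1^\pm<0<\rho_2^\pm$ in \eqref{dfgh21245}, for $z>a$ the function $g_{1,q}(z)=e^{\rho_1^+(z-a)}$ decays exponentially, while $g_{2,q}(z)$ is asymptotically dominated by $(1-c_2(q))e^{\rho_2^+(z-a)}$, whose coefficient is strictly positive by Lemma \ref{c1c201}. Consequently
\begin{align*}
w(x,z)\sim -g_{1,q}(x)(1-c_2(q))e^{\rho_2^+(z-a)},
\end{align*}
and the same asymptotic holds with $x$ replaced by $r$, so the ratio converges to $g_{1,q}(x)/g_{1,q}(r)$. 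The symmetric analysis as $y\downarrow-\infty$ uses $1-c_1(q)>0$ in the dominant term $(1-c_1(q))e^{\rho_1^-(y-a)}$ of $g_{1,q}(y)$ and yields $g_{2,q}(x)/g_{2,q}(r)$.

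It then remains to match these ratios to the piecewise expressions. For \eqref{e4}: when $r>a$ one automatically has $x>a$, both $g_{1,q}$ values collapse to single exponentials and the ratio simplifies to $e^{\rho_1^+(x-r)}$; when $r\leq a$, $g_{1,q}(r)$ is exactly the denominator in \eqref{e4}, while the numerator $g_{1,q}(x)$ automatically handles both subcases $x>a$ and $r\leq x\leq a$ through its piecewise definition in \eqref{g1}. The verification of \eqref{e5} is entirely analogous with $g_{2,q}$ and $c_2(q)$ taking the place of $g_{1,q}$ and $c_1(q)$. The most delicate step is the asymptotic extraction from $w$: one must rule out that the subdominant exponential pieces of $g_{1,q}$ and $g_{2,q}$ conspire to cancel the leading contribution after subtraction in \eqref{wxy}. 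Strict positivity of $1-c_i(q)$ supplied by Lemma \ref{c1c201} is precisely the ingredient that prevents such a cancellation, and the argument would degenerate at $|\beta|=1$, the reflecting case deliberately excluded from the model.
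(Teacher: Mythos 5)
Your proof is correct and follows essentially the same route as the paper: both pass to the limit as the auxiliary barrier recedes to $\pm\infty$ in the two-sided exit identities of Theorem \ref{lap 1} and extract the dominant exponential from $w$, the only cosmetic difference being that the paper applies L'H\^{o}pital's rule and then divides by $e^{\rho_2^+(z-a)}$ whereas you read off the asymptotics directly. Your explicit appeal to $1-c_i(q)>0$ from Lemma \ref{c1c201} to justify the non-degeneracy of the leading term, and the monotone-convergence justification of $\lim_{z\uparrow\infty}\mathbb{E}_x[e^{-q\tau_r};\tau_r<\tau_z]=\mathbb{E}_x[e^{-q\tau_r}]$, are welcome refinements of details the paper leaves implicit.
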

\begin{proof}
For $x\geq r$, letting $y=r$ and $z\uparrow\infty$ in \eqref{e3},
by L'H\^{o}pital's rule we have
\begin{align*}
\mathbb{E}_{x}[e^{-q \tau_{r}}]
&=\lim_{z\uparrow\infty}\mathbb{E}_{x}[e^{-q \tau_{r, z}}]
=\lim_{z\uparrow\infty}\frac{g_{2,q}'(z)g_{1,q}(x)-g_{1,q}'(z)g_{2,q}(x)}
{g_{2,q}'(z)g_{1,q}(r)-g_{1,q}'(z)g_{2,q}(r)}\\
&=\lim_{z\uparrow\infty}
\frac{\big((1-c_2)\rho_2^+e^{\rho_2^+ (z-a)}+c_2 \rho_1^+e^{\rho_1^+ (z-a)}\big)g_{1,q}(x)-\rho_{1}^+e^{\rho_{1}^+(z-a)} g_{2,q}(x)}
{\big((1-c_2)\rho_2^+e^{\rho_2^+ (z-a)}+c_2 \rho_1^+e^{\rho_1^+ (z-a)}\big)g_{1,q}(r)-\rho_{1}^+e^{\rho_{1}^+(z-a)} g_{2,q}(r)}.
\end{align*}
Dividing both the numerator and denominator by $e^{\rho_2^+(z-a)}$ in the above equation yields
\begin{align*}
\mathbb{E}_{x}[e^{-q \tau_{r}}]
&=\lim_{z\uparrow\infty}
\frac{\big((1-c_2)\rho_2^++c_2 \rho_1^+e^{(\rho_1^+-\rho_{2}^+) (z-a)}\big)g_{1,q}(x)-\rho_{1}^+e^{(\rho_{1}^+-\rho_{2}^+)(z-a)} g_{2,q}(x)}
{\big((1-c_2)\rho_2^++c_2 \rho_1^+e^{(\rho_1^+-\rho_{2}^+) (z-a)}\big)g_{1,q}(r)-\rho_{1}^+e^{(\rho_{1}^+-\rho_{2}^+)(z-a)} g_{2,q}(r)}\\
&=\frac{g_{1,q}(x)}{g_{1,q}(r)}.
\end{align*}
Then \eqref{e4} follows from \eqref{g1}.
Laplace transform \eqref{e5} can be obtained similarly.
\end{proof}
\begin{rem}\label{cor w-}
For any $q>0$, $y< z<a$ and $x\in(y,z)$, we have
\begin{eqnarray*}
 &&\mathbb{E}_{x}[e^{-q \tau_{z}}; \tau_{z}<\tau_{y}]=
 \frac{e^{\rho_2^-(x-y)}-e^{\rho_1^-(x-y)}}
{e^{\rho_2^-(z-y)}-e^{\rho_1^-(z-y)}},\label{ex1}\ \ \
 \mathbb{E}_{x}[e^{-q \tau_{y}}; \tau_{y}<\tau_{z}]=\frac{e^{\rho_2^-(x-z)}-e^{\rho_1^-(x-z)}}
{e^{\rho_2^-(y-z)}-e^{\rho_1^-(y-z)}}.\nonumber
\end{eqnarray*}
\end{rem}
\begin{rem}
For $\beta=0$ and $\mu_+=\mu_-=\mu$, $X$ in \eqref{JD_a} reduces to Brownian motion with drift $\mu$. In this case, for $q>0$ and $x\in(y,z)$, we have
\begin{eqnarray*}
		&&\mathbb{E}_x[e^{-q \tau_z}; \tau_z<\tau_y]
=\frac{e^{\rho_2 (x-y)}-e^{\rho_1(x-y)}}
{e^{\rho_2(z-y)}-e^{\rho_1(z-y)}},\nonumber\ \ \
\mathbb{E}_x[e^{-q \tau_y}; \tau_y<\tau_z]
=\frac{e^{\rho_2(x-z)}-e^{\rho_1(x-z)}}
{e^{\rho_2(y-z)}-e^{\rho_1(y-z)}},
\end{eqnarray*}
where $\rho_2=\rho_2^-=\rho_2^+$ and $\rho_1=\rho_1^-=\rho_1^+$.
In addition, for $q>0$ and $x, r\in \mathbb{R}$ we have
\begin{eqnarray*}
&&\mathbb{E}_x[e^{-q \tau_r}]
=e^{\rho_2(x-r)}\mathbf{1}_{\{x< r\}}+e^{\rho_1(x-r)}\mathbf{1}_{\{x\geq r\}}.
\end{eqnarray*}
These results are consistent with those presented in Part II, Section 2 of \cite{Borodin2012}.
\end{rem}

\section{Conditions for optimal dividend strategies}\label{HJB}
In this section, we first present the optimal dividend problem for refracted skew Brownian risk process, and then prove the corresponding Hamilton-Jacobi-Bellman inequalities.

A dividend strategy $\pi\equiv (D_t^{\pi})_{t\geq 0}$ is a $(\mathcal{F}_t)$-adapted process starting at $0$ with sample paths that are non-decreasing  and  right continuous with left limits, where  $D_t^{\pi}$  represents the cumulated dividends up to time $t$ under the strategy $\pi$. Define a controlled risk process $U^{\pi}\equiv(U_t^{\pi})_{t\geq 0}$ with $U_0^{\pi}=x$ by
\begin{eqnarray}\label{sde2}
	\mathrm{d} U_t^{\pi}:=\mathrm{d} B_t+\beta L^{U^{\pi}}(\mathrm{d}t, a)+(\mu_+\mathbf{1}_{\{U_t^{\pi}>a\}}
+\mu_-\mathbf{1}_{\{U_t^{\pi}< a\}})\mathrm{d} t- \mathrm{d} D_t^{\pi},
\end{eqnarray}
where $L^{U^{\pi}}(\mathrm{d}t, a)$ is defined as the local time in \eqref{loctime}. Let $T^{\pi}:=\inf\{t\geq 0: U_t^{\pi}\leq0\}$ be the ruin time.
For initial capital $x\in\mathbb{R}_+$,
the expected total amount of dividends (discounted at rate $q>0$) until ruin associated to  $\pi$ is given by
\begin{eqnarray*}\label{def V}
	V_{\pi}(x):=\mathbb{E}_x\Big[\int_0^{T^{\pi}} e^{-q t} \mathrm{d} D_t^{\pi}\Big].
\end{eqnarray*}
A strategy $\pi$ is called admissible if ruin does not occur due to dividend payments, i.e. $\Delta D_t^{\pi}=D_{t}^{\pi}-D_{t-}^{\pi}\leq U_{t-}^{\pi} \vee 0$ for any $0\leq t< T^{\pi}$, and $\Delta D_t^{\pi}=0$ for $t\geq T^{\pi}$, and $b\vee x=\max\{b,x\}$, and SDE \eqref{sde2} has a unique strong solution.
Let $\Pi$ be the set of all admissible dividend strategies. Define a value function $V_*$ by
$$V_*(x):=\sup_{\pi\in\Pi}V_{\pi}(x),\,\, x\geq 0.$$
A dividend strategy $\pi_*\in\Pi$ is optimal if $V_{\pi_*}(x)=V_*(x)$ for all $x\in \mathbb{R_+}$.

Write $\mathcal{P}:=\{p_k, k=1,\cdots,N\}$ for a fixed finite subset of $(0, \infty)\backslash \{a\}$.
Let function $f: \mathbb{R}_+\rightarrow \mathbb{R}$ be right continuous at $0$ and continuous on $(0, \infty)$. Suppose that derivatives $f'$ and $f''$ on $\mathbb{R}_+\backslash (\mathcal{P}\cup\{a\})$ are locally bounded, and both the left- and right-derivative at each $y\in \mathcal{P}\cup\{a\}$ exist.
For $y\in \mathbb{R}_+\backslash (\mathcal{P}\cup\{a\})$, define the operator $\mathcal{A}$ by
\begin{eqnarray}\label{gen}
	&&\mathcal{A} f(y):=\frac{1}{2}f''(y)+(\mu_+\mathbf{1}_{\{y>a\}}
+\mu_-\mathbf{1}_{\{y< a\}})f'(y).
\end{eqnarray}
\begin{lem}[Verification Lemma]\label{lem hjb}
\begin{enumerate}[label=(\roman*)]
\item
Suppose that $V\in C^{2}\big(\mathbb{R}_+ \backslash (\mathcal{P}\cup\{a\})\big)\cap C(\mathbb{R}_{+})$
and its first derivative has finite left- and right-limits at each $p_{k}\in\mathcal{P}$ denoted by $V'(p_{k}-)$ and $V'(p_{k}+)$, respectively.
If $V$ satisfies the following Hamilton-Jacobi-Bellman (HJB) inequalities
\begin{align}
&(\mathcal{A}-q)V(x)\leq 0 &&\mathrm{for}\ x\in \mathbb{R_+}\backslash (\mathcal{P}\cup\{a\});\label{hjb1}\\
&1-V'(x)\leq 0    &&\mathrm{for}\ x\in \mathbb{R_+}\backslash (\mathcal{P}\cup\{a\});\label{hjb2}\\
&(1+\beta) V'(a+)-(1-\beta)V'(a-)\leq 0;   \label{hjb3}\\
&V'(x+)-V'(x-)\leq 0   && \mathrm{for}\ x\in \mathcal{P}; \label{hjb4}
	\end{align}
then $V(x)\geq V_*(x)$ for all $x\in \mathbb{R_+}$.
\item
If $\hat{\pi}$ is an admissible dividend strategy with the associated expected discounted dividend function, $V_{\hat{\pi}}$, satisfying the smoothness condition and the HJB inequalities \eqref{hjb1}-\eqref{hjb4} in (i), then $V_{\hat{\pi}}(x)=V_*(x)$.
\end{enumerate}
\end{lem}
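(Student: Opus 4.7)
The plan is to apply the Itô-Tanaka-Meyer formula to $e^{-qt}V(U_t^\pi)$ for an arbitrary $\pi\in\Pi$, and then use each of the HJB inequalities to show that every non-martingale term is non-positive, producing the comparison $e^{-qt}V(U_t^\pi)\le V(x)+M_t-\int_0^t e^{-qs}\,dD_s^\pi$ with $M$ a local martingale. Because $V$ is $C^2$ off the finite set $\mathcal{P}\cup\{a\}$ and admits one-sided derivatives at each singular point, the appropriate version of Itô's formula (see \cite{Protter2005}, Thm. 70, and \cite{Lejay2006}, Sec. 6) introduces a symmetric-local-time correction $\tfrac{1}{2}(V'(y+)-V'(y-))L^{U^\pi}(t,y)$ at each $y\in\mathcal{P}\cup\{a\}$; away from these points the interior contribution is just the usual $(\mathcal{A}-q)V$ integrand.

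Splitting $D^\pi=D^{\pi,c}+D^{\pi,d}$ and substituting the SDE \eqref{sde2}, the formula produces
\begin{align*}
e^{-qt}V(U_t^\pi)=\,&V(x)+M_t+\int_0^t e^{-qs}(\mathcal{A}-q)V(U_{s-}^\pi)\,ds\\
&+\frac{(1+\beta)V'(a+)-(1-\beta)V'(a-)}{2}\int_0^t e^{-qs}\,dL^{U^\pi}(s,a)\\
&+\sum_{p_k\in\mathcal{P}}\frac{V'(p_k+)-V'(p_k-)}{2}\int_0^t e^{-qs}\,dL^{U^\pi}(s,p_k)\\
&-\int_0^t e^{-qs}V'(U_{s-}^\pi)\,dD_s^{\pi,c}+\sum_{0<s\le t}e^{-qs}\bigl[V(U_s^\pi)-V(U_{s-}^\pi)\bigr],
\end{align*}
where $M_t:=\int_0^t e^{-qs}V'(U_{s-}^\pi)\,dB_s$ is a local martingale and the coefficient of $L^{U^\pi}(\cdot,a)$ is obtained by combining the skew drift $\beta\,dL^{U^\pi}(\cdot,a)$ in \eqref{sde2}, evaluated against $V'$ by the symmetric-local-time convention $\int_0^t V'(U_{s-}^\pi)\,dL^{U^\pi}(s,a)=\tfrac{V'(a+)+V'(a-)}{2}L^{U^\pi}(t,a)$, with the Itô-Tanaka jump correction at $a$. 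The interior integrand is $\le 0$ by \eqref{hjb1}, and the two local-time coefficients are $\le 0$ by \eqref{hjb3} and \eqref{hjb4}. Finally, \eqref{hjb2} gives $V'\ge 1$, so $V(U_s^\pi)-V(U_{s-}^\pi)=-\int_0^{\Delta D_s^\pi}V'(U_{s-}^\pi-u)\,du\le-\Delta D_s^\pi$ and $-V'(U_{s-}^\pi)\,dD_s^{\pi,c}\le-dD_s^{\pi,c}$, so the three dividend contributions combine into $-\int_0^t e^{-qs}\,dD_s^\pi$.

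To conclude, I localize $M$ by $\tau_n:=\inf\{t:|U_t^\pi|\ge n\}\wedge n$, evaluate at $t\wedge\tau_n\wedge T^\pi$, and take expectations to obtain
\begin{equation*}
V(x)\ge\mathbb{E}_x\Big[\int_0^{t\wedge\tau_n\wedge T^\pi}e^{-qs}\,dD_s^\pi\Big]+\mathbb{E}_x\big[e^{-q(t\wedge\tau_n\wedge T^\pi)}V(U_{t\wedge\tau_n\wedge T^\pi}^\pi)\big].
\end{equation*}
Monotone convergence on the dividend integral together with bounded convergence on the terminal term --- using that $U_{T^\pi}^\pi=0$ on $\{T^\pi<\infty\}$ together with $V(0)=0$ (or the harmless extension $V\equiv 0$ on $(-\infty,0]$), and a growth bound on $V$ that the discount factor $e^{-qt}$ dominates on $\{T^\pi=\infty\}$ --- yields $V(x)\ge V_\pi(x)$ as $n\to\infty$ and $t\to\infty$. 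Taking the supremum over $\pi\in\Pi$ proves (i). Part (ii) is then immediate: applying (i) to $V_{\hat\pi}$ gives $V_{\hat\pi}\ge V_*$, while $V_{\hat\pi}\le V_*$ by definition of $V_*$.

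The main obstacle is the bookkeeping at the skew point $a$: the factor $\tfrac{1}{2}$ in the symmetric local time is essential for the Itô-Tanaka correction $\tfrac{1}{2}(V'(a+)-V'(a-))$ to combine with $\tfrac{\beta}{2}(V'(a+)+V'(a-))$ into exactly $\tfrac{1}{2}\bigl[(1+\beta)V'(a+)-(1-\beta)V'(a-)\bigr]$ matching \eqref{hjb3}; any misalignment here would break the whole reduction. A secondary technical point is controlling $e^{-qt}V(U_t^\pi)$ as $t\to\infty$, which needs an a priori growth bound on $V$; this is transparent for the explicit barrier and band candidates constructed in Sections \ref{barrier_strategy}-\ref{band_strategy}, where $V$ grows at most like $e^{\rho_2^+ x}$ and is suppressed by the discount via the characteristic equations \eqref{dfgh}-\eqref{dfgh212}.
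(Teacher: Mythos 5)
Your proposal is correct and follows essentially the same route as the paper: apply the It\^{o}-Tanaka-Meyer formula with symmetric local-time corrections at $\mathcal{P}\cup\{a\}$, check that the coefficient of $L^{U^{\pi}}(\cdot,a)$ combines the skew term $\tfrac{\beta}{2}(V'(a+)+V'(a-))$ with the jump correction $\tfrac12(V'(a+)-V'(a-))$ to match \eqref{hjb3}, bound each remaining term by \eqref{hjb1}, \eqref{hjb2} and \eqref{hjb4}, localize, and pass to the limit by monotone convergence. The only cosmetic difference is at the terminal term, where the paper simply discards $e^{-q(t\wedge T_n)}V(U_{t\wedge T_n}^{\pi})\geq 0$ rather than invoking $V(0)=0$ and a growth bound, so your extra convergence argument there is unnecessary but harmless.
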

\begin{proof}
We only prove (i).
Note that $V$ can be expressed as the difference of two convex functions, c.f. Section 6 of \cite{Lejay2006}.
Then $V$ is locally Lipschitz on $\mathbb{R}_+$ and differentiable a.e., and its derivative $V'$ has locally bounded variation. Subsequently, by standard bounded variation theory, $V'$ is regulated, that is, the one-sided limits $V'(p\pm)$ exist and are finite for each $p\in \mathbb{R}_+ \cup\{a\}$ (c.f. Section 10 of \cite{Rockafellar1998}). Under our regularity assumptions $V'(p\pm)$ coincide with the one-sided derivatives of $V$ at $p$.
The second generalized derivative of $V$ is given by
\begin{eqnarray*}
\mu(\mathrm{d}y)=V{''}(y)\mathrm{d}y
+\sum_{p\in\mathcal{P}\cup\{a\}}(V'(p+)-V'(p-))\delta_p(\mathrm{d}y),
\end{eqnarray*}
where $\delta_p$ denotes a Dirac mass at $p$.
Define $T_n:=\inf\{t>0: U_t^{\pi}\notin (0, n) \}$.
Applying the It\^{o}-Tanaka-Meyer formula to
$\big(e^{-q(t\wedge T_n)}V(U_{t\wedge T_n}^{\pi})\big)_{t\geq 0}$, we have that under $\mathbb{P}_x$,
\begin{align*}
&e^{-q(t\wedge T_n)}V(U_{t\wedge T_n}^{\pi})
		=V(x)-\int_0^{t\wedge T_n}q e^{-q s} V(U_{s-}^{\pi})\mathrm{d}s+\int_0^{t\wedge T_n}e^{-q s}V'(U_{s-}^{\pi})\mathrm{d}U_{s}^{\pi}
\\ & +\frac{1}{2}\int_0^{t\wedge T_n}e^{-qs}\int_\mathbb{R_+}\mu(\mathrm{d}y)L^{U^{\pi}}(\mathrm{d}s, y)
+\sum_{0\leq s \leq t\wedge T_n}e^{-q s}\big(V(U_{s}^{\pi})-V(U_{s-}^{\pi})
-V'(U_{s-}^{\pi})\Delta U_{s}^{\pi}\big)
\\ &=V(x)-\int_0^{t\wedge T_n}q e^{-q s} V(U_{s-}^{\pi})\mathrm{d}s+\int_0^{t\wedge T_n} e^{-q s}(\mu_+\mathbf{1}_{\{U_{s-}^{\pi}>a\}}+\mu_-\mathbf{1}_{\{U_{s-}^{\pi}< a\}})V'(U_{s-}^{\pi})\mathrm{d}s
\\ &  +\int_0^{t\wedge T_n}e^{-q s}V'(U_{s-}^{\pi})\mathrm{d}B_{s}
-\int_0^{t\wedge T_n}e^{-q s}V'(U_{s-}^{\pi})\mathrm{d}D^{\pi, c}_{s}
-\sum_{0\leq s \leq t\wedge T_n}e^{-q s}V'(U_{s-}^{\pi})\Delta D_{s}^{\pi}
\\ & +\frac{\beta}{2} (V '(a+)+V'(a-))\int_0^{t\wedge T_n}e^{-q s} L^{U^{\pi}}(\mathrm{d}s, a)
+ \frac{1}{2}\int_0^{t\wedge T_n}e^{-qs}\int_\mathbb{R_+}V{''}(y)
L^{U^{\pi}}(\mathrm{d}s, y)\mathrm{d}y
\\ &
+ \frac{1}{2}\sum_{p\in \mathcal{P}\cup\{a\}}\big(V'(p+)-V'(p-)\big)
\int_0^{t\wedge T_n}e^{-q s} L^{U^{\pi}}(\mathrm{d}s, p)
\\ &
+\sum_{0\leq s \leq t\wedge T_n}e^{-q s}\big(V(U_{s}^{\pi})-V(U_{s-}^{\pi})-V'(U_{s-}^{\pi})\Delta U_{s}^{\pi}\big).
	\end{align*}
By the occupation time formula (c.f. \cite{Lejay2006}, equation (34)), for $t<T^{\pi}$, we have \[\int_\mathbb{R_+}V{''}(y)L^{U^{\pi}}(t, y)\mathrm{d}y=\int_0^t V{''}(U_{s-}^\pi)\mathrm{d}s\,\,\mathrm{and}\,\, \Delta U_{s}^{\pi}=-\Delta D_{s}^{\pi}.\]
Then, by \eqref{gen} we have
\begin{align*}
e^{-q(t\wedge T_n)}V(U_{t\wedge T_n}^{\pi})
&=V(x)+\int_0^{t\wedge T_n}e^{-q s}(\mathcal{A}-q) V(U_{s-}^{\pi})\mathrm{d}s
		+M_{t\wedge T_n}\\
&\ \ -\int_0^{t\wedge T_n}e^{-q s}V'(U_{s-}^{\pi})\mathrm{d}D^{\pi, c}_{s}
+\sum_{0\leq s \leq t\wedge T_n}e^{-q s}\big(V(U_{s}^{\pi})-V(U_{s-}^{\pi})\big)
\\&\ \
+\big(\frac{1+\beta}{2} V'(a+)-\frac{1-\beta}{2}V'(a-)\big)\int_0^{t\wedge T_n}e^{-q s} L^{U^{\pi}}(\mathrm{d}s, a)\\
&\ \ + \frac{1}{2}\sum_{p\in \mathcal{P}}\big(V'(p+)-V'(p-)\big)
\int_0^{t\wedge T_n}e^{-q s} L^{U^{\pi}}(\mathrm{d}s, p),
\end{align*}
where $(D_t^{\pi,c})_{t\geq 0}$ denotes the continuous part of the process $(D_t^{\pi})_{t\geq 0}$ and
 $M_{t}:=\int_0^t e^{-q s}V'(U_{s-}^{\pi})\mathrm{d} B_s$ for $t\geq 0$ is a local martingale with $M_0=0$.

For any $x_1, x_2\in\mathbb{R}_+$ with $x_1<x_2$, if $V\in C^1([x_1, x_2])$,
then by \eqref{hjb2} we have $V'(x)\geq 1$ for $x\in[x_1, x_2]$, and by the mean value theorem, we have
$V(x_2)-V(x_1)\geq x_2-x_1$. Similarly, if $V\in C^1([x_1, p)\cup(p, x_2])$ for $p\in\mathcal{P}\cup\{a\}$,
then
\begin{equation*}
V(x_2)-V(x_1)
=V(x_2)-V(p)+V(p)-V(x_1)\geq x_2-p+p-x_1=x_2-x_1.
\end{equation*}
Thus,
$V(U_s^{\pi})-V(U_{s-}^{\pi})
=-(V(U_{s-}^{\pi})-V(U_s^{\pi}))
\leq -(U_{s-}^{\pi}- U_s^{\pi})=\Delta U_s^{\pi}=-\Delta D_s^{\pi}.$
Combining \eqref{hjb1}, \eqref{hjb3} and \eqref{hjb4} we have
\begin{align*}
		V(x)&=e^{-q(t\wedge T_n)}V(U_{t\wedge T_n}^{\pi})-\int_0^{t\wedge T_n}e^{-q s}(\mathcal{A}-q) V(U_{s-}^{\pi})\mathrm{d}s-M_{t\wedge T_n}
\\& \ \
+\int_0^{t\wedge T_n}e^{-q s}V'(U_{s-}^{\pi})\mathrm{d}D^{\pi, c}_{s}
-\sum_{0\leq s \leq t\wedge T_n}e^{-q s}\big(V(U_{s}^{\pi})-V(U_{s-}^{\pi})\big)
\\&\ \
-\big(\frac{1+\beta}{2} V'(a+)-\frac{1-\beta}{2}V'(a-)\big)\int_0^{t\wedge T_n}e^{-q s} L^{U^{\pi}}(\mathrm{d}s, a)\\
&\ \
-\frac{1}{2}\sum_{p\in \mathcal{P}}\big(V'(p+)-V'(p-)\big)
\int_0^{t\wedge T_n}e^{-q s} L^{U^{\pi}}(\mathrm{d}s, p)\\
& \geq e^{-q(t\wedge T_n)}V(U_{t\wedge T_n}^{\pi})+\int_0^{t\wedge T_n}e^{-q s}\mathrm{d} D^{\pi, c}_{s}
+\sum_{0\leq s \leq t\wedge T_n}e^{-q s}\Delta D_s^{\pi}-M_{t\wedge T_n}\\
& = e^{-q(t\wedge T_n)}V(U_{t\wedge T_n}^{\pi})+\int_0^{t\wedge T_n}e^{-q s}\mathrm{d} D_s^{\pi}-M_{t\wedge T_n}.
	\end{align*}
Note that ${T_n}{\longrightarrow}T^\pi$ for $\mathbb{P}_x$-a.s. Taking expectation on both sides of the inequality above
and letting $t, n\uparrow\infty$, since $V\geq 0$ on $\mathbb{R}_+$, by the monotone convergence theorem
\[V(x) \geq \lim_{t,n\uparrow\infty}\mathbb{E}_x\Big[\int_0^{t\wedge T_n}e^{-q s}\mathrm{d} D_s^{\pi}\Big]=\mathbb{E}_x\Big[\int_0^{T^{\pi}}e^{-q s}\mathrm{d} D_s^{\pi}\Big]= V_{\pi}(x).\]
This completes the proof.
\end{proof}
\begin{defn}\label{defpi2}
For $\pi\in\Pi$, suppose that $V_\pi$ is piecewise $C^1$ on $\mathbb{R}_+$ with at most finitely many non-$C^1$ points. Define
$\mathcal{P}_\pi
:=\{ y \in \mathbb{R}_+ \setminus\{a\}: V_\pi \text{ is not } C^1 \text{ at } y \}$.
Then, \(\mathcal{P}_\pi\) is a finite (possibly empty) subset of $\mathbb{R}_+\setminus\{a\}$.
\end{defn}

\section{Optimal barrier strategies}\label{barrier_strategy}
In this section, we consider the barrier strategy for dividend payment.  According to such a strategy $\pi_b$ for $b\geq0$, intuitively a minimal amount of dividend is paid whenever the surplus process is going to upcross level $b$ to  keep the controlled surplus below level $b$. If the underlying surplus process is a Brownian motion with drift, the controlled process is described  by the drifted Brownian motion reflected at level $b$ and the accumulated dividends is represented by the local time for the reflected process.
For $b>a$, the refracted skew Brownian surplus process follows the dynamics of a drifted Brownian motion near $b$.
Therefore, the dividend process $ (D_t^{\pi_b})_{t\geq 0}$  is well defined and the controlled process  $U^{\pi_b}$ is the unique solution to \eqref{sde2} with $\pi$ replaced by $\pi_b$.
\subsection{Expected discounted dividend function for barrier strategies}
Denote by $\hat{\tau}_y$ the time at which process $U^{\pi_b}$ first hits the boundary $y$,
\begin{align*}
	&\hat{\tau}_{y}:=\inf\{t\geq 0, U_t^{\pi_b}\leq y\}.
\end{align*}
The time of ruin $T^{\pi_b}$ is equal to $\hat{\tau}_{0}$. We write $V_b$ for $V_{\pi_b}$ and present an expression $V_{b}$ for a general barrier dividend strategy $\pi_b$ whose proof is deferred to Appendix \ref{prof value}.
\begin{lem}\label{value function}
	For any $0\leq a_0< b$ and $b\in\mathbb{R}_+\backslash\{a\}$, we have
	\begin{align}\label{vpib1}
		&V_{b}(b,a_0):=\mathbb{E}_b\Big[\int_0^{\hat{\tau}_{a_0}} e^{-q t} \mathrm{d} D_t^{\pi_{b}}\Big]=\frac{w(b, a_0)}{w_{b}(b, a_0)},
	\end{align}
	where $w(x,y)$ is given by \eqref{wxy} and
	\begin{align}\label{wxpart}
		&w_{x}(x,y):=\partial w(x,y)/\partial x.
	\end{align}
Further, for $x\in\mathbb{R}_+$ and $b\in\mathbb{R}_+\backslash\{a\}$, we have
	\begin{eqnarray}\label{val 1}
		&V_b(x)=\left \{
		\begin{array}{ll}
			\frac{W(x)}{W'(b)} & \mathrm{for}\ 0\leq x\leq b, \\
x-b+\frac{W(b)}{W'(b)} & \mathrm{for}\ x>b,
		\end{array} \right.
	\end{eqnarray}
where
\begin{eqnarray}\label{sca 1}
	&&W(x):=w(x,0)=g_{2,q}(x)g_{1,q}(0)-g_{1,q}(x)g_{2,q}(0).
	\end{eqnarray}
\end{lem}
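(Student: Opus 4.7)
The plan is to combine a strong Markov argument on the sub-barrier interval with a smooth-pasting condition at $b$, and to rigorously verify the result by an It\^o-Tanaka-Meyer computation in the spirit of Lemma \ref{lem hjb}. Throughout, \eqref{val 1} will follow from \eqref{vpib1} by specializing $a_0 = 0$ and extending above $b$ by a lump-sum payment. The key observation is that under the barrier strategy $\pi_b$ the controlled process $U^{\pi_b}$ is silent below $b$, so it coincides with the uncontrolled refracted skew Brownian motion $X$ until the first exit time $\tau_{a_0,b}$; above $b$, the strategy immediately releases the excess $x-b$ as a lump-sum dividend.

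For $a_0 \le x \le b$, since no dividends are paid before $\tau_{a_0,b}$, the strong Markov property applied at $\tau_{a_0,b}$ together with Theorem \ref{lap 1} gives
\begin{equation*}
V_b(x,a_0) = \mathbb{E}_x\bigl[e^{-q\tau_b};\, \tau_b < \tau_{a_0}\bigr]\, V_b(b,a_0) = \frac{w(x,a_0)}{w(b,a_0)}\, V_b(b,a_0).
\end{equation*}
For $x > b$, the instantaneous lump-sum payment yields $V_b(x,a_0) = (x-b) + V_b(b,a_0)$, and hence $V_b'(b+,a_0) = 1$. Imposing smooth pasting $V_b'(b-,a_0) = V_b'(b+,a_0)$ (legitimate since $b \neq a$, so $U^{\pi_b}$ near $b$ is a drifted Brownian motion reflected at $b$) determines the constant as $V_b(b,a_0) = w(b,a_0)/w_b(b,a_0)$, which is \eqref{vpib1}. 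Setting $a_0 = 0$, so that $\hat\tau_0 = T^{\pi_b}$, $W(x) = w(x,0)$, and $W'(b) = w_b(b,0)$, then gives \eqref{val 1}.

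To make the smooth-pasting identification rigorous I would introduce the candidate $U(x) := w(x,a_0)/w_b(b,a_0)$ for $a_0 \le x \le b$, extended by $U(x) := (x-b) + w(b,a_0)/w_b(b,a_0)$ for $x > b$. Because $g_{1,q}$ and $g_{2,q}$ solve \eqref{g1g2g3} and satisfy the transmission condition \eqref{g1qx}, $U$ satisfies $(\mathcal{A}-q)U = 0$ on $(a_0,b) \setminus \{a\}$, the transmission condition at $a$, and $U'(b-) = U'(b+) = 1$ by construction. Applying the It\^o-Tanaka-Meyer formula to $e^{-q(t\wedge T_n)}\, U(U_{t\wedge T_n}^{\pi_b})$ and taking expectations, the $(\mathcal{A}-q)U$ integral vanishes, the local-time term at $a$ vanishes by the transmission condition, and the local-time term at $b$ combined with $U'(b-) = 1$ collapses to $-\int_0^{\cdot} e^{-qs}\,dD_s^{\pi_b}$. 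Sending $t,n \to \infty$ and exploiting boundedness of $U$ on $[a_0,b]$ then recovers $U(b) = V_b(b,a_0)$. The main obstacle is exactly this bookkeeping --- tracking the simultaneous contributions of the local time at the kink $a$, the local time at the reflecting barrier $b$, and the killing at $a_0$, and justifying the limit passage on the stochastic integral --- which parallels the verification-lemma argument but is applied to a function designed so that all HJB inequalities hold with equality.
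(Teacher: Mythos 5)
Your proposal is correct in substance but reaches \eqref{vpib1} by a genuinely different route than the paper. The paper identifies $V_b(b,a_0)$ by decomposing the payout at the first passage of the controlled process to level $a$: on $\{U^{\pi_b}_t>a\}$ the reflected process is an ordinary drifted Brownian motion reflected at $b$, so the dividends up to $\hat\tau_a$ and the Laplace transform $\mathbb{E}_b[e^{-q\hat\tau_a}]$ are read off from known fluctuation identities (Proposition 1 of Renaud--Zhou and Theorem 4.1 of Zhou), after which a renewal-type equation in $V_b(b,a_0)$ is solved using the algebraic identity $w_b(b,a)w(b,a_0)-w_b(b,b)w(a,a_0)=w(b,a)w_b(b,a_0)$. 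You instead construct the candidate $U(x)=w(x,a_0)/w_b(b,a_0)$ solving $(\mathcal{A}-q)U=0$ with $U(a_0)=0$, the transmission condition at $a$, and $U'(b-)=1$, and verify $U=V_b(\cdot,a_0)$ by the It\^o--Tanaka--Meyer computation of Lemma \ref{lem hjb} run with equalities. Your route is self-contained (no appeal to external fluctuation identities for reflected processes) at the cost of redoing the semimartingale bookkeeping; the paper's route outsources that work to cited results but must then splice the two regimes together at $a$ by the strong Markov property.

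One caveat: your first-pass derivation via ``smooth pasting'' $V_b'(b-,a_0)=V_b'(b+,a_0)=1$ is not legitimate as stated --- continuous fit of the derivative at a reflecting barrier is a \emph{conclusion} about the value function (here it follows from the explicit formula, since $V_b'(b-)=W'(b)/W'(b)=1$), not an a priori property you may impose to pin down the constant; used alone it would be circular. Your verification step repairs this, because there the condition $U'(b-)=1$ is a design choice for the candidate, and the martingale argument --- the $(\mathcal{A}-q)U$ integral vanishing, the local time at $a$ vanishing by \eqref{g1qx}, the dividend integral collapsing to $-\int e^{-qs}\,\mathrm{d}D_s^{\pi_b}$ because $D^{\pi_b}$ increases only where $U'=1$, and the terminal term vanishing since $U(a_0)=0$ and $U$ is bounded on $[a_0,b]$ --- then proves $U(x)=V_b(x,a_0)$ without assuming anything about the regularity of $V_b$ itself. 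So the proof should be presented with the verification as the argument and the smooth-pasting remark only as motivation.
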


Notice that, for $0\leq x< a$ and $\beta=0$, by \eqref{sca 1} and \eqref{1c1q1} we have
\begin{align*}
&W(x)=\big(1-c_1(q)\big)e^{-(\rho_1^-+\rho_2^-)a}
(e^{\rho_2^-x}-e^{\rho_1^-x})=\frac{(\rho_2^- -\rho_1^+)e^{2\mu_-a}}{\rho_2^--\rho_1^-} (e^{\rho_2^- x}-e^{\rho_1^- x}),
\end{align*}
which is proportional to the scale function $\frac{2}{\rho_2^--\rho_1^-}(e^{\rho_2^- x}-e^{\rho_1^- x})$ of the classical Brownian motion with a drift $\mu_-$.
In addition, $V_{a-}$ and $V_{a+}$ denote the limits as $b$ approaches $a$ from the left and right, respectively, i.e. $V_{a-}=\lim_{b\rightarrow a-} V_b$ and $V_{a+}=\lim_{b\rightarrow a+} V_b$.

\begin{rem}
Because the	Skew Brownian motion has a singular drift at level $a$, its behavior at $a$ is very different from the standard diffusion. For example, its transition density function is not continuous at $a$; see \cite{Ahmadi2024}.
Due to the skewness, the scale function $W$ is continuous, but not differentiable at $a$, and as a result, $V_{a-}\neq V_{a+}$ in general.
\end{rem}

\begin{rem}\label{brownone}
For $\beta=0$ and $\mu_+=\mu_-=\mu$, $X$ in \eqref{JD_a} reduces to Brownian motion with drift $\mu$. In this case, the function \eqref{val 1} simplifies to
\begin{eqnarray*}
		&V_b(x)=\left \{
		\begin{array}{ll}
			\frac{e^{\rho_2 x}-e^{\rho_1 x}}{\rho_2 e^{\rho_2 b}- \rho_1 e^{\rho_1 b}} & \mathrm{for}\ 0\leq x\leq b, \\
x-b+\frac{e^{\rho_2 b}-e^{\rho_1 b}}{\rho_2 e^{\rho_2 b}- \rho_1 e^{\rho_1 b}} & \mathrm{for}\ x>b,
		\end{array} \right.
	\end{eqnarray*}
where $\rho_2=\rho_2^-=\rho_2^+$ and $\rho_1=\rho_1^-=\rho_1^+$, consistent with (2.11) of \cite{Gerber2004}.
\end{rem}
\subsection{Optimal barrier strategies}
By \eqref{val 1},
to identify the optimal barrier strategy $\pi_*$ that maximizes $V_b$ for any given $x\geq 0$, one needs to discuss the convexity and extreme behaviour of the function $W'$.
We next present expressions of $W'(x)$ for $x\geq 0$. Its proof is deferred to Appendix \ref{prop ww11}.
\begin{prop}\label{prop ww}
For any $0\leq x< a$, we have
\begin{align}\label{wa-}
&W'(x)=\big(1-c_1(q)\big)e^{-(\rho_1^-+\rho_2^-)a}
(\rho_2^-e^{\rho_2^-x}-\rho_1^-e^{\rho_1^-x}).
\end{align}
For any $x>a>0$, we have
\begin{align}\label{wa221}
W'(x)&=\rho_2^+\Big(c_1(q)e^{-\rho_2^-a}+\big(1-c_1(q)\big)e^{-\rho_1^-a}\Big)
\big(1-c_2(q)\big)e^{\rho_2^+(x-a)}\\
&\ \  -\rho_1^+\Big(\big(1-c_1(q)c_2(q)\big)e^{-\rho_2^-a}-c_2(q)\big(1-c_1(q)\big)e^{-\rho_1^-a}\Big)
e^{\rho_1^+(x-a)}.\nonumber
\end{align}
In particular,
\begin{align}\label{wa-a+}
(1-\beta)W'(a-)=(1+\beta) W'(a+).
\end{align}
\end{prop}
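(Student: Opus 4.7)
The plan is to establish the three formulas by direct computation, treating $W(x)=g_{2,q}(x)g_{1,q}(0)-g_{1,q}(x)g_{2,q}(0)$ as an explicit linear combination of exponentials on each side of $a$. Since $0\le x<a$ (we assume $a>0$ so $0$ lies in the lower regime), the value $g_{1,q}(0)$ and $g_{2,q}(0)$ are given by the lower-regime branches of \eqref{g1}--\eqref{g2}; I will record them once as
\begin{align*}
g_{1,q}(0)&=c_1(q)e^{-\rho_2^- a}+\bigl(1-c_1(q)\bigr)e^{-\rho_1^- a},\qquad
g_{2,q}(0)=e^{-\rho_2^- a},
\end{align*}
and keep these as fixed constants throughout.

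For the first formula, on $[0,a)$ I substitute the lower-regime expressions for $g_{1,q}(x)$ and $g_{2,q}(x)$ into $W(x)$. The key simplification is that $g_{2,q}(x)g_{1,q}(0)$ and $g_{1,q}(x)g_{2,q}(0)$ share the coefficient $c_1(q)e^{\rho_2^-(x-a)}e^{-\rho_2^- a}$, which cancels; what remains collapses to a single factor $(1-c_1(q))e^{-(\rho_1^-+\rho_2^-)a}(e^{\rho_2^- x}-e^{\rho_1^- x})$, and differentiating yields \eqref{wa-}. For the second formula, on $(a,\infty)$ I substitute the upper-regime expressions for $g_{1,q}(x)$ and $g_{2,q}(x)$. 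Expanding and grouping by the two exponentials $e^{\rho_2^+(x-a)}$ and $e^{\rho_1^+(x-a)}$ gives
\begin{align*}
W(x)&=g_{1,q}(0)(1-c_2(q))\,e^{\rho_2^+(x-a)}+\bigl(g_{1,q}(0)c_2(q)-g_{2,q}(0)\bigr)e^{\rho_1^+(x-a)}.
\end{align*}
Substituting the explicit $g_{1,q}(0),g_{2,q}(0)$ into the bracketed coefficient of $e^{\rho_1^+(x-a)}$ and rewriting $c_1(q)c_2(q)e^{-\rho_2^- a}-e^{-\rho_2^- a}=-(1-c_1(q)c_2(q))e^{-\rho_2^- a}$ reproduces the coefficient appearing in \eqref{wa221}; differentiating then gives the stated formula.

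The third formula is the cleanest: since $W$ is a $g_{1,q}(0)$/$g_{2,q}(0)$-weighted combination of $g_{2,q}$ and $g_{1,q}$, the one-sided derivatives at $a$ inherit the interface condition \eqref{g1qx}. Explicitly,
\begin{align*}
(1+\beta)W'(a+)&=g_{1,q}(0)(1+\beta)g_{2,q}'(a+)-g_{2,q}(0)(1+\beta)g_{1,q}'(a+)\\
&=g_{1,q}(0)(1-\beta)g_{2,q}'(a-)-g_{2,q}(0)(1-\beta)g_{1,q}'(a-)=(1-\beta)W'(a-),
\end{align*}
which yields \eqref{wa-a+} without any further computation. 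As a consistency check, one may verify \eqref{wa-a+} from the explicit formulas \eqref{wa-} and \eqref{wa221} by invoking \eqref{c1c266}, but the argument via \eqref{g1qx} is both shorter and more conceptual.

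I expect the first two parts to be entirely routine, with the only mild obstacle being bookkeeping: keeping track of the four exponents $\rho_{1,2}^{\pm}$ and correctly collecting coefficients of $e^{\rho_1^{\pm} a}$ and $e^{\rho_2^{\pm} a}$. No sharper inequality or functional-analytic input is needed beyond the definitions \eqref{g1}--\eqref{c1c266} and differentiation.
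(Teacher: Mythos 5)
Your proposal is correct and follows essentially the same route as the paper: both compute $g_{1,q}(0)$, $g_{2,q}(0)$ from the lower-regime branches, expand $W$ (or $W'$) regime by regime, and derive \eqref{wa-a+} directly from the transmission condition \eqref{g1qx} applied to the linear combination defining $W$. The only difference is that you simplify $W(x)$ before differentiating while the paper differentiates $g_{i,q}$ first, which is immaterial.
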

Since $\lim_{b\uparrow\infty}W'(b)=\infty$, we have $\lim_{b\uparrow\infty}V_b(x)=0$, and then, $V_b(x)$ attains its maximum for a finite value of $b\geq 0$.
To determine the extreme behaviour of $W'$ over intervals $(0,a)$ and $(a,\infty)$, we consider the possible solutions of equation $W''(x)=0$ in the two intervals. For $0\leq x< a$,
\begin{eqnarray}\label{b11}
&&W''(x)=\big(1-c_1(q)\big)e^{-(\rho_1^-+\rho_2^-)a}\big({\rho_2^-}^2e^{\rho_2^-x}
-{\rho_1^-}^2e^{\rho_1^-x}\big),
\end{eqnarray}
and for $x>a>0$,
\begin{align}\label{b22}
W''(x)&={\rho_2^+}^2\big(1-c_2(q)\big)\Big(c_1(q)e^{-\rho_2^-a}+\big(1-c_1(q)\big)e^{-\rho_1^-a}\Big)
e^{\rho_2^+(x-a)}\\
&\ \  -{\rho_1^+}^2 \Big(\big(1-c_1(q)c_2(q)\big)e^{-\rho_2^-a}-c_2(q)\big(1-c_1(q)\big)e^{-\rho_1^-a}\Big)
e^{\rho_1^+(x-a)}.\nonumber
\end{align}
\begin{prop}\label{propb-b+K}
Equation $W''(x)=0, x\in\mathbb{R}$, has a unique solution $b_-$ for $W''$ given by \eqref{b11}, and a unique solution $b_+$ if $K(\beta)>0$ for $W''$ given by \eqref{b22}, where
\begin{eqnarray}
&& b_-:=\frac{2}{\rho_2^--\rho_1^-}\ln\frac{-\rho_1^-}{\rho_2^-}, \label{b-}\\
&& b_+:=a+\frac{1}{\rho_2^+-\rho_1^+}\ln K(\beta), \label{b+} \\
&& K(\beta):=\frac{{\rho_1^+}^2\big((1-c_1(q)c_2(q))e^{-\rho_2^-a}-c_2(q)(1-c_1(q))
e^{-\rho_1^-a}\big)}{{\rho_2^+}^2(1-c_2(q))(c_1(q)e^{-\rho_2^-a}+(1-c_1(q))e^{-\rho_1^-a})} \label{K}.
\end{eqnarray}
\end{prop}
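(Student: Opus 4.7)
The plan is to reduce each claim to a single scalar exponential equation of the form $\alpha\, e^{\lambda y} = 1$ with $\lambda > 0$ and strictly positive coefficients, whose unique real solution is then immediate by strict monotonicity of the exponential. Both statements are essentially algebraic rearrangements of \eqref{b11} and \eqref{b22}, plus a sign/nonvanishing check that justifies taking logarithms.

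First I would treat $b_-$. By Lemma \ref{c1c201}, $1 - c_1(q) > 0$, so the multiplicative prefactor $(1 - c_1(q))\, e^{-(\rho_1^- + \rho_2^-)a}$ in \eqref{b11} is nonzero and can be cancelled. The equation $W''(x) = 0$ then reduces to ${\rho_2^-}^2 e^{\rho_2^- x} = {\rho_1^-}^2 e^{\rho_1^- x}$. Since $\rho_1^- < 0 < \rho_2^-$, both sides are strictly positive, so taking logarithms produces the linear equation $(\rho_2^- - \rho_1^-)\, x = 2\log(-\rho_1^-/\rho_2^-)$, whose unique solution is exactly $b_-$ in \eqref{b-}. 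As an independent sanity check on uniqueness, the derivative ${\rho_2^-}^3 e^{\rho_2^- x} - {\rho_1^-}^3 e^{\rho_1^- x}$ of the bracketed expression is strictly positive (both terms are positive because $(-\rho_1^-)^3 > 0$), so the bracket is strictly increasing on $\mathbb{R}$ and vanishes at most once.

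Next I would handle $b_+$ by rewriting the two-exponential expression \eqref{b22} in the compact form $A\, e^{\rho_2^+(x-a)} = B\, e^{\rho_1^+(x-a)}$, where
\begin{align*}
A &:= {\rho_2^+}^2 (1-c_2(q))\bigl(c_1(q)\, e^{-\rho_2^- a} + (1-c_1(q))\, e^{-\rho_1^- a}\bigr), \\
B &:= {\rho_1^+}^2 \bigl((1-c_1(q) c_2(q))\, e^{-\rho_2^- a} - c_2(q)(1-c_1(q))\, e^{-\rho_1^- a}\bigr),
\end{align*}
and noting that $K(\beta) = B/A$ by \eqref{K}. The hypothesis $K(\beta) > 0$ implicitly requires $A \neq 0$, so dividing through by $A\, e^{\rho_1^+(x-a)} \neq 0$ yields $e^{(\rho_2^+ - \rho_1^+)(x-a)} = K(\beta)$. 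Since $\rho_2^+ - \rho_1^+ > 0$, the left-hand side is a strictly increasing continuous bijection of $\mathbb{R}$ onto $(0,\infty)$, so under the positivity assumption on $K(\beta)$ there is a unique real solution, namely $b_+$ as in \eqref{b+}.

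The argument is algebraic in nature, so I do not anticipate any substantial conceptual obstacle. The only places that demand care are the bookkeeping that identifies $B/A$ with the stated ratio $K(\beta)$, and the verification that the conditions needed to pass to logarithms are actually in force: nonvanishing of $1 - c_1(q)$ (supplied by Lemma \ref{c1c201}) for $b_-$, and nonvanishing of $A$ together with the existence of a real logarithm of $K(\beta)$ (both supplied by the hypothesis $K(\beta) > 0$) for $b_+$.
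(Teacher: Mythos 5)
Your proposal is correct and follows essentially the same route as the paper's own proof: both reduce $W''(x)=0$ to $e^{(\rho_2^--\rho_1^-)x}={\rho_1^-}^2/{\rho_2^-}^2$ for $b_-$ and to $e^{(\rho_2^+-\rho_1^+)(x-a)}=K(\beta)$ for $b_+$, then solve by taking logarithms. Your additional checks (cancelling the nonzero prefactor via $1-c_1(q)>0$ and the monotonicity sanity check) are harmless refinements of the same argument.
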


Its proof is deferred to Appendix \ref{propb-b+K11}. Notice that $b_-$ and $b_+$ do not depend on the initial surplus $x\geq0$. In addition, we have the following proposition. Its proof is deferred to Appendix \ref{b11min1}.
\begin{prop}\label{b1min}
\begin{enumerate}[label=(\roman*)]
\item $b_-\leq0$ if and only if $\mu_-\leq 0$.
\item $b_+>a$ if and only if $K(\beta)>1$.
\end{enumerate}
\end{prop}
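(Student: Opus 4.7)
The proposition is an elementary sign/monotonicity check, so my plan is mainly to unwind the definitions and track the signs of each factor.

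For part (i), my first move is to note from \eqref{dfgh21245} that $\rho_2^- - \rho_1^- = 2\sqrt{\mu_-^2 + 2q} > 0$ for $q>0$, so the prefactor $2/(\rho_2^- - \rho_1^-)$ in \eqref{b-} is strictly positive. Hence the sign of $b_-$ agrees with the sign of $\ln(-\rho_1^-/\rho_2^-)$. Next I would verify that the argument of the logarithm is strictly positive: indeed $\rho_2^- > 0$ by definition, and $-\rho_1^- = \mu_- + \sqrt{\mu_-^2+2q} > 0$ for $q>0$ since $\sqrt{\mu_-^2+2q} > |\mu_-|$. Finally the equivalence $\ln(-\rho_1^-/\rho_2^-) \leq 0 \Longleftrightarrow -\rho_1^- \leq \rho_2^-$, combined with $-\rho_1^- - \rho_2^- = 2\mu_-$ from \eqref{dfgh21245}, gives $b_- \leq 0 \Longleftrightarrow \mu_- \leq 0$.

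For part (ii), the reasoning is even more immediate: from \eqref{b+}, $b_+ - a = \ln K(\beta)/(\rho_2^+ - \rho_1^+)$, and by \eqref{dfgh21245} the denominator $\rho_2^+ - \rho_1^+ = 2\sqrt{\mu_+^2+2q}$ is strictly positive. The standing hypothesis $K(\beta) > 0$ (from Proposition \ref{propb-b+K}) ensures $\ln K(\beta)$ is defined, and the equivalence $b_+ > a \Longleftrightarrow \ln K(\beta) > 0 \Longleftrightarrow K(\beta) > 1$ is then immediate.

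There is no real obstacle here: once the signs of $\rho_2^\pm - \rho_1^\pm$ and of the logarithmic arguments are checked, both claims reduce to the fact that a logarithm is nonpositive (resp.\ positive) iff its argument is at most (resp.\ strictly greater than) one. The only mildly delicate point is confirming $-\rho_1^- > 0$ for all real $\mu_-$, which follows from $q > 0$; I would include a one-line verification of this to make the proof fully self-contained.
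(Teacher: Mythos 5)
Your proof is correct and follows essentially the same route as the paper's: both arguments reduce $b_-\leq 0$ and $b_+>a$ to the sign of the corresponding logarithm via the explicit formulas \eqref{b-}, \eqref{b+} and the relation $-\rho_1^- - \rho_2^- = 2\mu_-$ from \eqref{dfgh21245}. Your additional one-line checks that $-\rho_1^->0$ and $\rho_2^\pm-\rho_1^\pm>0$ are harmless extra care that the paper leaves implicit.
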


We are interested in those solutions such that $b_-\in (0, a)$ and $b_+\in (a, \infty)$. The following Lemma summarizes the monotone and convex behaviours of $W'$.
Its proof is deferred to Appendix \ref{a.2}.
\begin{lem}[Monotonicity and convexity for $W$ and $W'$]\label{lem W}
For any $\beta\in(-1,1)$, the function $W$ is a non-negative continuous increasing function  on $\mathbb{R}_+$ that is further twice continuously differentiable on $\mathbb{R}_+\backslash\{a\}$. Its derivative $W'$ satisfies $W'(x)>0$ for $x\in \mathbb{R}_+\backslash\{a\}$, and its convexity and monotonicity is summarized below.
\begin{enumerate}[label=(\Roman*)]
\item For $0\leq x< a$, we have
\begin{enumerate}[label=(\roman*)]
\item $W'(x)$ is strictly increasing if and only if $b_-\leq0$,
\item $W'(x)$ is non-monotone convex if and only if $0<b_-<a$,
\item $W'(x)$ is strictly decreasing if and only if $a\leq b_-$.
\end{enumerate}
\item For $x>a>0$, we have
\begin{enumerate}[label=(\roman*)]
\item $W'(x)$ is strictly increasing if  $K(\beta)\leq1 $,
\item $W'(x)$ is non-monotone convex if $K(\beta)>1$.
\end{enumerate}
\end{enumerate}
In conclusion, the restriction of $W'$ to $(0, a)$ has a unique local minimum at $b_-$ if and only if $\mu_-> 0$ and $b_-<a$, and the restriction of $W'$ to $(a,\infty)$ has a unique local minimum at $b_+$ if and only if $K(\beta)>1$.
\end{lem}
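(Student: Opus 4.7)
I would begin with regularity and initial data: since $g_{1,q},g_{2,q}\in C(\mathbb{R})\cap C^2(\mathbb{R}\setminus\{a\})$, the function $W=g_{2,q}\,g_{1,q}(0)-g_{1,q}\,g_{2,q}(0)$ inherits these properties, and $W(0)=0$ is immediate. Positivity on $(0,a)$ is read off \eqref{wa-}: Lemma~\ref{c1c201} gives $1-c_1(q)>0$ while \eqref{dfgh21245} gives $\rho_2^->0>\rho_1^-$, so every factor in the formula for $W'$ is positive and $W'>0$ on $(0,a)$. Consequently $W$ is positive and strictly increasing on $(0,a)$, and the skew-jump relation \eqref{wa-a+} together with $|\beta|<1$ propagates positivity across the interface, giving $W'(a+)>0$. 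A further input I will need for the $(a,\infty)$ analysis is $g_{1,q}(0)>0$, which I obtain by setting $r=0$, $x=a$ in the Laplace transform \eqref{e4} and using $g_{1,q}(a)=1$ to deduce $g_{1,q}(0)=1/\mathbb{E}_a[e^{-q\tau_0}]>0$; combined with Lemma~\ref{c1c201} this makes the coefficient $A:={\rho_2^+}^2(1-c_2(q))g_{1,q}(0)$ of the leading exponential in \eqref{b22} strictly positive, and I let $B$ denote the other coefficient so that $K(\beta)=B/A$.

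Next I analyse the convexity and monotonicity of $W'$ on $(0,a)$ from \eqref{b11}. Up to the positive prefactor $(1-c_1(q))e^{-(\rho_1^-+\rho_2^-)a}$, the essential factor is $h(x):={\rho_2^-}^2e^{\rho_2^- x}-{\rho_1^-}^2e^{\rho_1^- x}$, whose derivative ${\rho_2^-}^3e^{\rho_2^- x}-{\rho_1^-}^3e^{\rho_1^- x}$ is strictly positive by \eqref{dfgh21245}. Thus $h$ is strictly increasing and vanishes exactly at $b_-$ by Proposition~\ref{propb-b+K}, and the three sub-cases in conclusion~(I) reduce to locating $b_-$ relative to $(0,a)$: $b_-\le 0$ gives $h>0$ on $(0,a)$, hence $W'$ strictly increasing; $0<b_-<a$ gives a sign change of $W''$ at $b_-$, hence $W'$ a strict local minimum there and non-monotone convex; $b_-\ge a$ gives $h<0$ on $(0,a)$, hence $W'$ strictly decreasing. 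Proposition~\ref{b1min}(i) then translates $b_-\le 0$ into $\mu_-\le 0$.

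For $(a,\infty)$ I use $W''(x)=A\,e^{\rho_2^+(x-a)}-B\,e^{\rho_1^+(x-a)}$ with $A>0$. Its derivative $A\rho_2^+e^{\rho_2^+(x-a)}-B\rho_1^+e^{\rho_1^+(x-a)}$ is a sum of nonnegative terms whenever $B\ge 0$ (because $\rho_1^+<0$), so $W''$ is strictly increasing in that regime. A split on the sign of $K(\beta)=B/A$ then yields: if $K(\beta)\le 0$ then $W''(x)\ge A\,e^{\rho_2^+(x-a)}>0$ and $W'$ is strictly increasing; if $0<K(\beta)\le 1$ then $W''(a+)=A-B\ge 0$ while $W''$ is strictly increasing, so $W''>0$ on $(a,\infty)$; and if $K(\beta)>1$ then $W''(a+)<0$ and $W''\uparrow +\infty$, producing the unique zero at $b_+$ from Propositions~\ref{propb-b+K} and~\ref{b1min}(ii) and a strict local minimum of $W'$ at $b_+$. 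Strict positivity $W'>0$ on $(a,\infty)$ is immediate in the monotone cases from $W'(a+)>0$; in the non-monotone case $K(\beta)>1$ it follows by inspecting \eqref{wa221}, whose two coefficients are $\rho_2^+(1-c_2(q))g_{1,q}(0)>0$ and a second term with the same sign as $K(\beta)$, hence also positive. Assembling the three intervals yields conclusions (I), (II), and the final iff statements. I expect the main obstacle to be precisely this last point, maintaining $W'>0$ at its minimum $b_+$ in the non-monotone regime, which is resolved by tying the sign of the coefficient of the $e^{\rho_1^+(x-a)}$ term in \eqref{wa221} to the sign of $K(\beta)$.
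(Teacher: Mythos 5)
Your proposal is correct and follows the same skeleton as the paper's proof in Appendix A.7: on each of $(0,a)$ and $(a,\infty)$ you show $W'''>0$ (at least where the coefficient of the decaying exponential in $W''$ is nonnegative), so $W''$ is strictly increasing, and then locate its unique zero via Propositions \ref{propb-b+K} and \ref{b1min}, exactly as the paper does. You deviate in two localized places, both legitimately. First, you obtain $g_{1,q}(0)>0$ probabilistically from \eqref{e4} as $1/\mathbb{E}_a[e^{-q\tau_0}]$, whereas the paper proves the slightly stronger algebraic bound $g_{1,q}(0)=c_1(q)e^{-\rho_2^-a}+(1-c_1(q))e^{-\rho_1^-a}>e^{-\rho_2^-a}$ directly from $c_1(q)<1$ and $\rho_1^-<0<\rho_2^-$; either suffices. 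Second, and more substantively, for the positivity of $W'$ on $(a,\infty)$ the paper runs a two-case analysis on the sign of $c_2(q)$ with fairly intricate term-by-term estimates of \eqref{wa221}, while you dispatch the monotone cases $K(\beta)\le 1$ by $W'(x)>W'(a+)>0$ and the non-monotone case $K(\beta)>1$ by observing that the coefficient of $e^{\rho_1^+(x-a)}$ in \eqref{wa221} equals $-\rho_1^+$ times a bracket whose sign is that of $K(\beta)$, so both coefficients are positive; this is a cleaner route to the same conclusion and I verified the sign bookkeeping is right. The remaining assembly (the exclusive-and-exhaustive trichotomy in (I) upgrading the one-way implications to equivalences, and the translation $b_-\le 0\Leftrightarrow\mu_-\le 0$) is handled as in the paper.
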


Combining Lemmas \ref{value function} and \ref{lem W}, we obtain the following results on continuity and differentiability of function $V_b$.
\begin{rem}
For $b<a$ we have $V_b\in C(\mathbb{R_+})\cap C^2(\mathbb{R_+})$. For $b>a$ we have $V_b\in C(\mathbb{R_+})\cap C^2(\mathbb{R_+}\backslash \{a\})$. Further, $\mathcal{P}_{\pi_b}=\emptyset$ for both $b<a$ and $b>a$.
\end{rem}

Lemma \ref{lem W} suggests that $W'(x)$ may have its minimum at
\begin{align}\label{mathcalmin}
&\mathcal M:=\operatorname*{arg\,min}_{x\in\mathbb R_+\setminus\{a\}} W'(x)\in\{0+, b_-, a-, a+, b_+\}.
\end{align}
We thus propose five corresponding barrier strategies, and apply Lemma \ref{lem hjb} to identify conditions for each of the above barrier strategies to be optimal. We next introduce a proposition concerning $W'$ as function of $\beta$, and  write $W_{\beta}'$ for $W'$ to stress its dependence on $\beta$. Its proof can be found in Appendix \ref{a.4}.
\begin{prop}\label{prop W1}
	$W_{0}'(a-)=W_{0}'(a+)$ and for $\beta\neq 0$,
$W_{\beta}'(a-)<W_{\beta}'(a+)$ if and only if $\beta\in(-1, 0)$.
\end{prop}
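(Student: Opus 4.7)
The proposition follows almost immediately from the junction condition \eqref{wa-a+} established in Proposition \ref{prop ww}, which asserts that
\[
(1-\beta)\,W'_\beta(a-)=(1+\beta)\,W'_\beta(a+).
\]
My plan is to treat this algebraic identity as the entire source of the asymmetry and to read off the three claims as a direct consequence of it, combined with the strict positivity of $W'_\beta$ away from $a$ established in Lemma \ref{lem W}.

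First, I would dispose of the $\beta=0$ case: setting $\beta=0$ in the junction condition gives $W'_0(a-)=W'_0(a+)$, proving the first assertion. For the second assertion, I would rearrange the junction condition for $\beta\in(-1,1)\setminus\{0\}$ to isolate the one-sided jump. Writing
\[
W'_\beta(a+)-W'_\beta(a-)=\frac{1-\beta}{1+\beta}\,W'_\beta(a-)-W'_\beta(a-)
=\frac{-2\beta}{1+\beta}\,W'_\beta(a-),
\]
(or equivalently factoring the other way in terms of $W'_\beta(a+)$), I reduce the inequality $W'_\beta(a-)<W'_\beta(a+)$ to a sign statement on the right-hand side.

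Next, I would invoke Lemma \ref{lem W}, which guarantees $W'_\beta(a-)>0$ for all $\beta\in(-1,1)$; by the junction condition this also forces $W'_\beta(a+)>0$. Since $\beta\in(-1,1)$ gives $1+\beta>0$, the coefficient $\tfrac{W'_\beta(a-)}{1+\beta}$ is strictly positive, so the sign of $W'_\beta(a+)-W'_\beta(a-)$ coincides with the sign of $-\beta$. Consequently $W'_\beta(a+)>W'_\beta(a-)$ if and only if $\beta<0$, and combining with $\beta\in(-1,1)\setminus\{0\}$ yields the desired equivalence $\beta\in(-1,0)$.

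There is essentially no hard step here: the only nontrivial input is the junction condition \eqref{wa-a+} (already proved) and the positivity of $W'_\beta$ on $\mathbb{R}_+\setminus\{a\}$ (already in Lemma \ref{lem W}). The proposition is purely an algebraic corollary, and I would present it as such in just a few lines.
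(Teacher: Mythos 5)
Your proof is correct, but it takes a genuinely different and shorter route than the paper's. The paper proves Proposition \ref{prop W1} by differentiating with respect to $\beta$: it computes $\mathrm{d}c_{1,\beta}(q)/\mathrm{d}\beta$, shows that $W_{\beta}'(a-)$ is strictly increasing and $W_{\beta}'(a+)$ is strictly decreasing in $\beta$ on $(-1,1)$, verifies that the two coincide at $\beta=0$, and then concludes the equivalence from this monotonicity. You instead read everything off the junction identity \eqref{wa-a+}: writing
\[
W_{\beta}'(a+)-W_{\beta}'(a-)=\frac{-2\beta}{1+\beta}\,W_{\beta}'(a-),
\]
and using $W_{\beta}'(a-)>0$ (from the proof of Lemma \ref{lem W}) together with $1+\beta>0$, the sign of the jump is the sign of $-\beta$, which gives both the $\beta=0$ case and the stated equivalence at once. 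Your argument is purely algebraic, avoids the derivative computations entirely, and is the more economical proof of the statement as written; the paper's approach yields the additional (but otherwise unused) information that $W_{\beta}'(a-)$ and $W_{\beta}'(a+)$ are monotone in $\beta$. Both arguments ultimately rest on the same two inputs you identify: the junction condition \eqref{wa-a+} and the strict positivity of $W'$ near $a$.
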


We now present the main results of this section.
\begin{thm}\label{thm v0+}
(Optimality of $0$-barrier)
If all of the following three conditions hold
\begin{enumerate}[label=(\roman*)]
\centering
\item $\mu_-\leq0$,\ \ (\romannumeral2) $\mu_+\leq q a$, \ \ (\romannumeral3) $\beta\in(-1, 0]$,
\end{enumerate}
then the function $V_{0}$ satisfies the HJB inequalities \eqref{hjb1}-\eqref{hjb4}, and $V_*=V_{0}$ and $\pi_*=\pi_{0}$.
\end{thm}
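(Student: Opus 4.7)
The plan is to apply the Verification Lemma (Lemma \ref{lem hjb}(ii)) to the candidate strategy $\pi_{0}$, which pays the entire initial surplus at time $0$ and then induces immediate ruin. The first and essentially only substantive step is to identify $V_{0}$ explicitly. From \eqref{sca 1} we have
\begin{equation*}
W(0)=g_{2,q}(0)g_{1,q}(0)-g_{1,q}(0)g_{2,q}(0)=0,
\end{equation*}
so substituting $b=0$ into \eqref{val 1} gives $V_{0}(0)=W(0)/W'(0)=0$ and $V_{0}(x)=x-0+W(0)/W'(0)=x$ for $x>0$. Hence $V_{0}(x)=x$ on $\mathbb{R}_{+}$, which is $C^{\infty}$ on $\mathbb{R}_{+}$ (in particular smooth across $a$), so $\mathcal{P}_{\pi_{0}}=\emptyset$ and the regularity hypotheses of Lemma \ref{lem hjb} are automatic.

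Next I would check the four HJB inequalities. Inequality \eqref{hjb4} is vacuous because $\mathcal{P}_{\pi_{0}}=\emptyset$. For \eqref{hjb2}, $1-V_{0}'(x)=0$. For \eqref{hjb3}, since $V_{0}'(a\pm)=1$,
\begin{equation*}
(1+\beta)V_{0}'(a+)-(1-\beta)V_{0}'(a-)=2\beta\le 0,
\end{equation*}
by hypothesis (iii). For \eqref{hjb1}, using $V_{0}''\equiv 0$ and $V_{0}'\equiv 1$, the operator \eqref{gen} gives $(\mathcal{A}-q)V_{0}(x)=\mu_{-}-qx$ on $(0,a)$ and $(\mathcal{A}-q)V_{0}(x)=\mu_{+}-qx$ on $(a,\infty)$. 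On $(0,a)$ hypothesis (i) yields $\mu_{-}-qx\le \mu_{-}\le 0$; on $(a,\infty)$ hypothesis (ii) yields $\mu_{+}-qx<\mu_{+}-qa\le 0$. Each of the three assumptions of the theorem is therefore used exactly once, matching one of the nontrivial HJB inequalities.

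Finally I would note admissibility: the single jump $\Delta D_{0}^{\pi_{0}}=x=U_{0-}^{\pi_{0}}$ satisfies $\Delta D_{t}^{\pi_{0}}\le U_{t-}^{\pi_{0}}\vee 0$, so $\pi_{0}\in\Pi$, and by direct computation $V_{\pi_{0}}(x)=x=V_{0}(x)$. Lemma \ref{lem hjb}(ii) then gives $V_{*}=V_{0}$ and $\pi_{*}=\pi_{0}$.

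There is no real obstacle here; the proof is essentially a verification. The only point worth emphasizing is the clean correspondence between the hypotheses and the HJB conditions: $\beta\le 0$ controls the skew condition at the interface, while $\mu_{-}\le 0$ and $\mu_{+}\le qa$ together ensure that $\mathcal{A}V_{0}\le qV_{0}$ on each side of $a$ (the worst case on $(a,\infty)$ is the boundary $x=a+$, which is precisely why (ii) is stated as $\mu_{+}\le qa$ rather than $\mu_{+}\le 0$).
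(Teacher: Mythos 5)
Your proposal is correct and follows essentially the same route as the paper: identify $V_{0}(x)=x$ from \eqref{val 1}, then verify \eqref{hjb1}--\eqref{hjb4} directly, using (i) on $(0,a)$, (ii) on $(a,\infty)$, (iii) for the skew condition, and $\mathcal{P}_{\pi_{0}}=\emptyset$ for \eqref{hjb4}. The extra remarks on admissibility of the initial lump-sum payment and the direct computation $V_{\pi_{0}}(x)=x$ are a welcome tightening of a step the paper leaves implicit, but do not change the argument.
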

\begin{proof}
By \eqref{val 1} we have $V_{0}(x)=x$, and then
$V_{0}'(x)=1$ and $V_{0}''(x)=0$ for $x\in\mathbb{R}_+$. We next verify the HJB inequalities \eqref{hjb1}-\eqref{hjb4}. For $0\leq x<a$, by condition (\romannumeral1) we  obtain that
$$\frac{1}{2}V_{0}''(x)+\mu_-V_{0}'(x)-q V_{0}(x)
=\mu_--q x\leq \mu_-\leq 0.$$
For $x>a$, by condition (\romannumeral2) we get
$$\frac{1}{2}V_{0}''(x)+\mu_+V_{0}'(x)-q V_{0}(x)
=\mu_+-q x< \mu_+-q a\leq0.$$
Combining the above we have \eqref{hjb1} holds.
Since $V_{0}'(x)=1$, we have $1-V_{0}'(x)=0$ for $x\geq 0$, and then \eqref{hjb2} holds. In addition, by  condition (\romannumeral3) we have
$$(1+\beta)V_{0}'(a+)-(1-\beta)V_{0}'(a-)=2\beta\leq0.$$
Thus, \eqref{hjb3} holds. Since $\mathcal{P}_{\pi_0}=\emptyset$, we have \eqref{hjb4} holds. Therefore, $V_0$ satisfies the HJB inequalities \eqref{hjb1}-\eqref{hjb4} under conditions (\romannumeral1)-(\romannumeral3).
\end{proof}
\begin{rem}[Necessary condition for optimal $0$-barrier]\label{rem v02}
The optimality of $V_{0}$ implies that $W'$ attains its minimum at $0$. By Lemma \ref{lem W} (I) we have, the necessary condition for this is $b_-\leq0$ under which  $W'(x)$ is strictly increasing for $x\in[0,a)$, and then by Proposition \ref{b1min} (i) we get $\mu_-\leq0$.
\end{rem}
\begin{thm}\label{thm vb-}
(Optimality of $b_-$-barrier)
If all of the following three conditions hold,
\begin{enumerate}[label=(\roman*)]
\item $b_-\in(0, a)$,
\item $\big(\mu_+-q(a-b_-)\big)W'(b_-)\leq qW(b_-)$,
\item  $\beta\in(-1, 0]$,
\end{enumerate}
then the function $V_{b_-}$ satisfies the HJB inequalities \eqref{hjb1}-\eqref{hjb4}, and $V_*=V_{b_-}$ and $\pi_*=\pi_{b_-}$.
\end{thm}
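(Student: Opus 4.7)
The plan is to parallel the proof of Theorem \ref{thm v0+} by using Lemma \ref{value function} to write down $V_{b_-}$ explicitly and then verifying the HJB inequalities \eqref{hjb1}--\eqref{hjb4} under hypotheses (i)--(iii). Concretely, Lemma \ref{value function} gives
\[V_{b_-}(x)=\frac{W(x)}{W'(b_-)}\ \mathrm{for}\ 0\leq x\leq b_-, \qquad V_{b_-}(x)=x-b_-+\frac{W(b_-)}{W'(b_-)}\ \mathrm{for}\ x>b_-.\]
Under hypothesis (i), Lemma \ref{lem W} guarantees that $W'$ is non-monotone convex on $[0,a)$ with a unique minimum at $b_-$, so $W''(b_-)=0$, the left- and right-derivatives of $V_{b_-}$ at the barrier agree (both first derivatives equal $1$, both second derivatives equal $0$), and $V_{b_-}$ is affine throughout $(b_-,\infty)$. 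Consequently $\mathcal P_{\pi_{b_-}}=\emptyset$ and \eqref{hjb4} is vacuous.

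The pivotal identity I exploit is obtained by evaluating the ODE $\tfrac12 W''+\mu_-W'-qW=0$ at $x=b_-$: since $W''(b_-)=0$, this yields $W(b_-)/W'(b_-)=\mu_-/q$. To verify \eqref{hjb1}, I split $\mathbb{R}_+\setminus\{a\}$ into three regions. On $[0,b_-)\subset(0,a)$, $V_{b_-}$ is proportional to $W$, so $(\mathcal A-q)V_{b_-}\equiv 0$. On $(b_-,a)$, where $V_{b_-}'\equiv 1$ and $V_{b_-}''\equiv 0$, the identity gives $(\mathcal A-q)V_{b_-}(x)=\mu_--qV_{b_-}(x)=-q(x-b_-)\leq 0$. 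On $(a,\infty)$ the same flatness yields $(\mathcal A-q)V_{b_-}(x)=\mu_+-q(x-b_-)-\mu_-$, which is maximized as $x\downarrow a$ and is nonpositive there iff $\mu_+-q(a-b_-)\leq \mu_-=qW(b_-)/W'(b_-)$, which is exactly hypothesis (ii).

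For \eqref{hjb2} on $[0,b_-]$ the inequality $V_{b_-}'(x)=W'(x)/W'(b_-)\geq 1$ is immediate from the fact that $b_-$ minimizes $W'$ on $[0,a)$; on $(b_-,\infty)\setminus\{a\}$ we have $V_{b_-}'\equiv 1$ with equality. For \eqref{hjb3}, since the barrier $b_-$ lies strictly below $a$, both $V_{b_-}'(a-)$ and $V_{b_-}'(a+)$ equal $1$, so the skew inequality collapses to $2\beta\leq 0$, which is hypothesis (iii). Optimality of $\pi_{b_-}$ then follows from Lemma \ref{lem hjb}(ii).

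The main subtlety I anticipate is the identification of hypothesis (ii) as the binding constraint: it is not visible from the defining ODE for $W$ alone but emerges only when the affine extension of $V_{b_-}$ is pushed across the interface at $a$, where the drift jumps from $\mu_-$ to $\mu_+$. Once the boundary relation $W(b_-)/W'(b_-)=\mu_-/q$ is in hand, condition (ii) admits the clean interpretation $\mu_+\leq \mu_-+q(a-b_-)$, saying that the positive upper drift cannot overwhelm the discount absorbed while the surplus drifts from the barrier $b_-$ to the interface $a$.
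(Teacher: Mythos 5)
Your proposal is correct and follows essentially the same route as the paper: the explicit form of $V_{b_-}$ from Lemma \ref{value function}, the key identity $V_{b_-}(b_-)=W(b_-)/W'(b_-)=\mu_-/q$ obtained from $W''(b_-)=0$ and the ODE, and the region-by-region verification of \eqref{hjb1}--\eqref{hjb4} with hypothesis (ii) entering exactly where the affine part crosses the interface at $a$. Your reformulation of (ii) as $\mu_+\leq \mu_-+q(a-b_-)$ is a valid and slightly more transparent restatement of the paper's condition, but the underlying argument is identical.
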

\begin{proof}
We now show that $V_{b_-}$ satisfies the HJB inequality \eqref{hjb1}. For $0\leq x\leq b_-$,
\begin{eqnarray*}
&&W(x)=\big(1-c_1(q)\big)e^{-(\rho_1^-+\rho_2^-)a}(e^{\rho_2^-x}-e^{\rho_1^-x}),
\end{eqnarray*}
and then, by \eqref{dfgh}, \eqref{wa-} and \eqref{b11} we have
\begin{eqnarray*}
&&\frac{1}{2}V_{b_-}''(x)+\mu_-V_{b_-}'(x)-qV_{b_-}(x)\\
&&=\frac{\big(1-c_1(q)\big)e^{-(\rho_1^-+\rho_2^-)a}}{W'(b_-)}
\big((\frac{1}{2}{\rho_2^-}^2+\mu_-\rho_2^--q)e^{\rho_2^-x}
-(\frac{1}{2}{\rho_1^-}^2+\mu_-{\rho_1^-}-q)e^{\rho_1^-x}\big)
=0.\nonumber
\end{eqnarray*}
In particular, $V_{b_-}'(b_-)=1$. By condition (\romannumeral1) and the definition of $b_-$, we have $W''(b_-)=0$ and then $V_{b_-}''(b_-)=0$. Thus, we have
\begin{align}\label{mulg}
&V_{b_-}(b_-)=\frac{\mu_-}{q}.
\end{align}
For $b_-<x<a$, combining the fact that $V_{b_-}'(x)=1$, $V_{b_-}''(x)=0$ and $V_{b_-}(x)>V_{b_-}(b_-)$, we get
$$\frac{1}{2}V_{b_-}''(x)+\mu_-V_{b_-}'(x)-q V_{b_-}(x)
=\mu_--q V_{b_-}(x)\leq \mu_--q V_{b_-}(b_-)=0.$$
For $x>a$, by \eqref{val 1} we get $V_{b_-}'(x)=1$ and $V_{b_-}''(x)=0$, and since $V_{b_-}(x)>V_{b_-}(a)$, by condition (\romannumeral2) we have
\begin{align*}
\frac{1}{2} V_{b_-}''(x)+\mu_+V_{b_-}'(x)-q V_{b_-}(x)
&=\mu_+-q V_{b_-}(x)<\mu_+-q V_{b_-}(a)
\leq0.
\end{align*}
So, \eqref{hjb1} holds.
To prove inequality \eqref{hjb2}, for $0\leq x\leq b_-$, by condition (\romannumeral1), from Lemma \ref{lem W} (I) it follows that $W'(x)$ is non-monotone convex for $x\in[0, a)$ and $\inf_{x\in[0,a)} W'(x)=W'(b_-)$, then,
$$1-V_{b_-}'(x)=1-\frac{W'(x)}{W'(b_-)}\leq 1-\frac{W'(b_-)}{W'(b_-)}=0.$$
For $x\in(b_-, a)\cup(a, \infty)$, from $V_{b_-}'(x)=1$ we can directly obtain $1-V_{b_-}'(x)=0$. Thus, \eqref{hjb2} holds. In addition, since $V_{b_-}'(a+)=V_{b_-}'(a-)=1$, by condition (\romannumeral3) we have
$$(1+\beta)V_{b_-}'(a+)-(1-\beta)V_{b_-}'(a-)=2\beta\leq0.$$
Thus, \eqref{hjb3} holds. Since $\mathcal{P}_{\pi_{b_-}}=\emptyset$, we have \eqref{hjb4} holds. Therefore, $V_{b_-}$ satisfies the HJB inequalities \eqref{hjb1}-\eqref{hjb4} under conditions (\romannumeral1)-(\romannumeral3).
\end{proof}
\begin{rem}[Sufficient condition for optimal $b_-$-barrier]\label{rem vb-1}
If $0< b_-<a$, $\mu_+\leq0$ and $\beta\in(-1, 0]$, then conditions (\romannumeral1)-(\romannumeral3) in Theorem \ref{thm vb-} are satisfied, and $V_*=V_{b_-}$.
\end{rem}
\begin{proof}
Conditions (i) and (iii) in Theorem \ref{thm vb-} are satisfied. By Lemma \ref{lem W} we have $W(b_-), W'(b_-)>0$. Since $a-b_->0$, if $\mu_+\leq 0$, then
$\big(\mu_+-q(a-b_-)\big)W'(b_-)<0< qW(b_-)$, and condition (ii) in Theorem \ref{thm vb-} holds. Therefore, $V_*=V_{b_-}$.
\end{proof}
\begin{rem}[Necessary condition for optimal $b_-$-barrier]\label{rem vb-2}
The optimality of $V_{b_-}$ requires that $W'$ attains its minimum at $b_-$. By Lemma \ref{lem W} (I) we have, the necessary condition for this is $0< b_-<a$ under which $W'(x)$ is non-monotone convex for $x\in[0, a)$.
\end{rem}
\begin{thm}\label{thm va-}
(Optimality of $V_{a-}$) If all of the following three conditions hold,
\begin{enumerate}[label=(\roman*)]
\centering
\item $b_-\geq a$,\ \
(\romannumeral2) $\mu_+W'(a-)\leq qW(a)$,\ \
(\romannumeral3) $\beta\in(-1, 0]$,
\end{enumerate}
then the function $V_{a-}$ satisfies the HJB inequalities \eqref{hjb1}-\eqref{hjb4}, and $V_*=V_{a-}$.
\end{thm}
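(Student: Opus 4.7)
The plan is to mimic Theorems~\ref{thm v0+} and \ref{thm vb-} and apply the verification lemma (Lemma~\ref{lem hjb}) to $V_{a-}$. First I would write $V_{a-}$ explicitly as the $b\uparrow a$ limit of \eqref{val 1}: $V_{a-}(x)=W(x)/W'(a-)$ for $0\leq x\leq a$, and $V_{a-}(x)=x-a+W(a)/W'(a-)$ for $x>a$. Condition~(i) together with Lemma~\ref{lem W}(I)(iii) guarantees that $W'$ is strictly decreasing on $[0,a)$, so $W'(a-)>0$ and the division is legitimate. Since $V_{a-}'(a-)=W'(a-)/W'(a-)=1$ and $V_{a-}'(a+)=1$, we have $V_{a-}\in C^{1}(\mathbb{R}_+)$, $\mathcal{P}_{\pi_{a-}}=\emptyset$, and \eqref{hjb4} holds trivially.

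For \eqref{hjb1} I would split at $a$. On $(0,a)$, $V_{a-}$ is proportional to $W$, and since $W$ is a linear combination of $e^{\rho_{1}^- x}$ and $e^{\rho_{2}^- x}$ with $\rho_{i}^-$ solving \eqref{dfgh}, the identity $\tfrac{1}{2}W''+\mu_- W'-qW=0$ immediately yields $(\mathcal{A}-q)V_{a-}\equiv 0$ on $(0,a)$. On $(a,\infty)$, $V_{a-}'\equiv 1$ and $V_{a-}''\equiv 0$, so
\[(\mathcal{A}-q)V_{a-}(x)=\mu_+ - q\bigl(x-a+W(a)/W'(a-)\bigr) < \mu_+ - qW(a)/W'(a-) \leq 0,\]
the last inequality being exactly condition~(ii). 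For \eqref{hjb2}, on $(0,a)$ we have $V_{a-}'(x)=W'(x)/W'(a-)$, and the strict decrease of $W'$ on $[0,a)$ yields $W'(x)\geq W'(a-)$, hence $V_{a-}'(x)\geq 1$; on $(a,\infty)$, $V_{a-}'(x)=1$ directly. Finally, \eqref{hjb3} reduces to $(1+\beta)-(1-\beta)=2\beta\leq 0$, which is condition~(iii).

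The argument is almost entirely mechanical and parallels Theorems~\ref{thm v0+} and \ref{thm vb-}. The only step requiring any care is identifying, via Lemma~\ref{lem W}, the regime in which $b_-\geq a$ forces $W'$ to attain its infimum on $[0,a)$ at $a-$, so that the optimal barrier sits exactly at the interface rather than in the interior. Conditions~(ii) and (iii) then respectively dominate the drift in the high regime $\{x>a\}$ and the up-crossing cost imposed by the skew term at $a$, and the verification lemma concludes $V_{*}=V_{a-}$.
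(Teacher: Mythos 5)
Your proposal is correct and follows essentially the same route as the paper: verifying the HJB inequalities \eqref{hjb1}--\eqref{hjb4} one by one, with condition (i) plus Lemma~\ref{lem W}(I) giving the monotone decrease of $W'$ on $[0,a)$ for \eqref{hjb2}, condition (ii) controlling the drift on $(a,\infty)$ for \eqref{hjb1}, and condition (iii) handling the skew term in \eqref{hjb3}. The only cosmetic difference is that you spell out the exponential form of $W$ on $(0,a)$ where the paper simply refers back to the computation in Theorem~\ref{thm vb-}.
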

\begin{proof}
We now verify whether $V_{a-}$ satisfies the HJB inequality \eqref{hjb1}.
For $0\leq x<a$, using a similar method to the proof in Theorem \ref{thm vb-} for $0\leq x \leq b_-$, we can obtain
\begin{eqnarray*}
&&\frac{1}{2}V_{a-}''(x)+\mu_-V_{a-}'(x)-qV_{a-}(x)=0.
\end{eqnarray*}
Clearly $V_{a-}'(a-)=1$. For $x>a$, from \eqref{val 1} it follows that $V_{a-}'(x)=1$, $V_{a-}''(x)=0$ and $V_{a-}(x)>V_{a-}(a)$, and then by condition (\romannumeral2) we have
$$\frac{1}{2}V_{a-}''(x)+\mu_+V_{a-}'(x)-q V_{a-}(x)
=\mu_+-q V_{a-}(x)\leq \mu_+-q V_{a-}(a)
=\mu_+-q \frac{W(a)}{W'(a-)}
\leq 0.$$
So \eqref{hjb1} holds. Next, we proceed to prove inequality \eqref{hjb2}. For $0\leq x< a$, by condition (\romannumeral1), from Lemma \ref{lem W} (I) it follows that $W'(x)$ is strictly decreasing for $x\in[0, a)$ and $\inf_{x\in[0,a)} W'(x)=W'(a-)$, and then
 $$1-V_{a-}'(x)=1-\frac{W'(x)}{W'(a-)}\leq 1-\frac{W'(a-)}{W'(a-)}=0.$$
For $x>a$, from $V_{a-}'(x)=1$, we can directly obtain $1-V_{a-}'(x)=0$. Thus, \eqref{hjb2} holds.
Moreover, since $V_{a-}'(a-)=V_{a-}'(a+)=1$, by condition (\romannumeral3) we get
$$(1+\beta)V_{a-}'(a+)-(1-\beta)V_{a-}'(a-)
=(1+\beta)-(1-\beta)=2\beta\leq0.$$
Then, \eqref{hjb3} holds. Since $\mathcal{P}_{\pi_{a-}}=\emptyset$, we have \eqref{hjb4} holds. Therefore, the HJB inequalities \eqref{hjb1}-\eqref{hjb4} hold for $V_{a-}$ under conditions (\romannumeral1)-(\romannumeral3).
\end{proof}
\begin{rem}
Although $V_*=V_{a-}$ in Theorem \ref{thm va-}, we can not find the corresponding optimal strategy $\pi_*$. Instead,  the collection of barrier strategies $(\pi_{a-1/n})_n$ is ``asymptotically optimal'' in the sense that $V_{a-}=\lim_{n\to\infty}V(a-1/n)$.
\end{rem}
\begin{rem}[Sufficient condition for optimality of $V_{a-}$]\label{rem va-1}
If $b_-\geq a$, $\mu_+\leq0$ and $\beta\in(-1, 0]$, then conditions (\romannumeral1)-(\romannumeral3) in Theorem \ref{thm va-} are satisfied, and $V_*=V_{a-}$.
\end{rem}
\begin{rem}[Necessary condition for optimality of  $V_{a-}$]\label{rem va-2}
The optimality of $V_{a-}$ indicate that $W'(a-)=\inf_{x\in[0, a)}W'(x)$. By Lemma \ref{lem W} (I) we have, the necessary condition for this is $a\leq b_-$ under which $W'(x)$ is strictly decreasing for $x\in[0,a)$.
\end{rem}
\begin{thm}\label{thm va+}
(Optimality of $V_{a+}$)
If both of the following two conditions hold,
\begin{enumerate}[label=(\roman*)]
\centering
\item $\inf_{x\in[0,a)} W'(x)\geq W'(a+)$,\ \ (\romannumeral2) $\mu_+W'(a+)\leq qW(a)$,
\end{enumerate}
then the function $V_{a+}(x)$ satisfies the HJB inequalities \eqref{hjb1}-\eqref{hjb4}, and $V_*=V_{a+}$.
\end{thm}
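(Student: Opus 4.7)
The plan is to follow exactly the template of the previous three optimality theorems in this section: write down $V_{a+}$ explicitly, verify the four HJB inequalities of Lemma \ref{lem hjb}(i), and conclude $V_* \leq V_{a+}$; the reverse inequality follows because $V_{a+} = \lim_{b \downarrow a} V_b \leq V_*$ since each $\pi_b \in \Pi$. From \eqref{val 1}, passing to the limit $b \downarrow a$ gives
\[
V_{a+}(x) = \frac{W(x)}{W'(a+)}\mathbf{1}_{\{0 \leq x \leq a\}} + \Bigl(x - a + \frac{W(a)}{W'(a+)}\Bigr)\mathbf{1}_{\{x > a\}},
\]
which is continuous on $\mathbb{R}_+$ and $C^2$ on $\mathbb{R}_+ \setminus \{a\}$; hence $\mathcal{P}_{\pi_{a+}} = \emptyset$ and \eqref{hjb4} is vacuous.

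For \eqref{hjb1}, the key observation is that $W$, being a linear combination of $g_{1,q}$ and $g_{2,q}$, itself satisfies the ODE \eqref{g1g2g3} on $\mathbb{R}_+ \setminus \{a\}$. On $(0,a)$ this yields $\tfrac{1}{2}W''(x) + \mu_- W'(x) - qW(x) = 0$, so dividing by $W'(a+)$ gives $(\mathcal{A}-q)V_{a+}(x) = 0$. On $(a,\infty)$, since $V'_{a+} \equiv 1$ and $V''_{a+} \equiv 0$, we have
\[
(\mathcal{A}-q)V_{a+}(x) = \mu_+ - qV_{a+}(x) \leq \mu_+ - qV_{a+}(a) = \mu_+ - q\frac{W(a)}{W'(a+)} \leq 0,
\]
where the last inequality is precisely condition (ii). For \eqref{hjb2}, on $[0,a)$ the statement $1 - V'_{a+}(x) = 1 - W'(x)/W'(a+) \leq 0$ is exactly condition (i), and on $(a,\infty)$ it holds with equality.

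The subtle point — and the reason no hypothesis on $\beta$ is required, in contrast to Theorems \ref{thm v0+}, \ref{thm vb-}, \ref{thm va-} — is that \eqref{hjb3} is automatic here. Indeed $V'_{a+}(a+) = 1$ and $V'_{a+}(a-) = W'(a-)/W'(a+)$, while the skew transmission identity \eqref{wa-a+} rearranges to $W'(a-)/W'(a+) = (1+\beta)/(1-\beta)$, whence
\[
(1+\beta)V'_{a+}(a+) - (1-\beta)V'_{a+}(a-) = (1+\beta) - (1+\beta) = 0.
\]
So \eqref{hjb3} holds with equality for every $\beta \in (-1,1)$. This is the conceptual heart of the proof: the boundary-skew relation \eqref{wa-a+} embedded in $W$ is exactly tuned so that a barrier placed at $a+$ satisfies the skew-HJB condition freely.

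The main obstacle is not analytic but structural: one must recognise that using $W'(a+)$ (rather than $W'(a-)$) in the denominator makes \eqref{hjb3} collapse, whereas any other choice would force a constraint on $\beta$. Once this is seen, conditions (i) and (ii) are nothing more than the clean translations of \eqref{hjb2} on $[0,a)$ and \eqref{hjb1} on $(a,\infty)$ respectively, and the verification in Lemma \ref{lem hjb}(i) finishes the proof.
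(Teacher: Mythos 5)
Your proposal is correct and follows essentially the same route as the paper: verify \eqref{hjb1} via the ODE on $(0,a)$ and condition (ii) on $(a,\infty)$, \eqref{hjb2} via condition (i), \eqref{hjb3} via the identity \eqref{wa-a+}, and \eqref{hjb4} vacuously. Your explicit remark that $V_{a+}\leq V_*$ because $V_{a+}=\lim_{b\downarrow a}V_b$ with each $\pi_b$ admissible is a useful clarification that the paper leaves implicit, but it does not change the argument.
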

\begin{proof}
We now consider whether $V_{a+}$ satisfies the HJB inequality \eqref{hjb1}. For $0\leq x<a$, following the proof used in Theorem \ref{thm vb-} for $0\leq x \leq b_-$, we have
$$\frac{1}{2}V_{a+}''(x)+\mu_-V_{a+}'(x)-qV_{a+}(x)=0.$$
For $x>a$, from \eqref{val 1} it follows that $V_{a+}'(x)=1$, $V_{a+}''(x)=0$ and $V_{a+}(x)>V_{a+}(a)$. Particularly $V_{a+}'(a+)=1$ and $V_{a+}''(a+)=0$. By condition (\romannumeral2) we have
$$\frac{1}{2}V_{a+}''(x)+\mu_+V_{a+}'(x)-q V_{a+}(x)
=\mu_+-q V_{a+}(x)
\leq \mu_+-q V_{a+}(a+)=\mu_+-q \frac{W(a)}{W'(a+)}
\leq0.$$
So \eqref{hjb1} holds. We next show that inequality \eqref{hjb2} holds. For $0\leq x< a$, the condition (\romannumeral1) implies that $W'(x)\geq W'(a+)$, and then,
 $$1-V_{a+}'(x)=1-\frac{W'(x)}{W'(a+)}\leq 1-\frac{W'(a+)}{W'(a+)}=0.$$
For $x>a$, since $V_{a+}'(x)=1$, we have $1-V_{a+}'(x)=0$. Thus, \eqref{hjb2} holds.
In addition, by \eqref{wa-a+} we have
$(1+\beta)V_{a+}'(a+)-(1-\beta)V_{a+}'(a-)
=(1+\beta)-(1+\beta)=0.$
Thus, \eqref{hjb3} holds. Since $\mathcal{P}_{\pi_{a+}}=\emptyset$, we have \eqref{hjb4} holds. Therefore, the HJB inequalities \eqref{hjb1}-\eqref{hjb4} hold for $V_{a+}$ under conditions (\romannumeral1)-(\romannumeral2).
\end{proof}
\begin{rem}[Sufficient condition for optimality of $V_{a+}$]\label{rem va+1}
If $a\leq b_-$, $\mu_+\leq0$ and $\beta\in[0, 1)$, then conditions (\romannumeral1)-(\romannumeral2) in Theorem \ref{thm va+} are satisfied, and $V_*=V_{a+}$.
\end{rem}
\begin{proof}
When $a\leq b_-$, by Lemma \ref{lem W} (I) we have, $W'(x)$ is strictly decreasing for $x\in [0, a)$, and then $W'(a-)=\inf_{x\in[0,a)}W'(x)$. By Proposition \ref{prop W1} we get $W'(a+)\leq W'(a-)$ for $\beta\in[0, 1)$. Thus $W'(a+)\leq\inf_{x\in[0,a)}W'(x)$, i.e. condition (\romannumeral1) in Theorem \ref{thm va+} holds. By Lemma \ref{lem W} we have $W(a), W'(a+)>0$. If $\mu_+\leq 0$, then
$\mu_+W'(a+)\leq0< qW(a)$,
and condition (\romannumeral2) in Theorem \ref{thm va+} holds. Therefore, $V_*=V_{a+}$.
\end{proof}
\begin{rem}[Necessary condition for optimality of $V_{a+}$]\label{rem va+2}
The optimality of $V_{a+}$ implies $W'(a+)\leq \inf_{x\in[0, a)}W'(x)$, and then $W'(a+)\leq W'(a-)$. By Proposition \ref{prop W1} we have, the necessary condition for this is $\beta\in[0, 1)$.
\end{rem}
\begin{thm}\label{thm vb2}
(Optimality of $b_+$-barrier)
If both of the following two conditions hold,
\begin{enumerate}[label=(\roman*)]
\centering
\item $b_+>a$,\ \ (\romannumeral2) $\inf_{x\in\mathbb{R_+}\backslash\{a\}} W'(x)=W'(b_+)$,
\end{enumerate}
then the function $V_{b_+}$ satisfies the HJB inequalities \eqref{hjb1}-\eqref{hjb4} with $V_*=V_{b_+}$ and $\pi_*=\pi_{b_+}$.
\end{thm}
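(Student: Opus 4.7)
The plan is to follow the template already used in Theorems \ref{thm vb-}--\ref{thm va+} and check that $V_{b_+}$ satisfies each of the four HJB inequalities in Lemma \ref{lem hjb}, whence $V_*=V_{b_+}$ and $\pi_*=\pi_{b_+}$ by part (ii) of that lemma. By Lemma \ref{value function} and condition (i),
\[V_{b_+}(x)=\frac{W(x)}{W'(b_+)}\text{ on }[0,b_+],\qquad V_{b_+}(x)=x-b_++\frac{W(b_+)}{W'(b_+)}\text{ on }(b_+,\infty),\]
so $V_{b_+}\in C(\mathbb{R}_+)\cap C^2(\mathbb{R}_+\setminus\{a\})$ and $\mathcal{P}_{\pi_{b_+}}=\emptyset$, which makes \eqref{hjb4} vacuous.

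For the differential inequality \eqref{hjb1} I would work region by region. On $[0,a)$ and on $(a,b_+)$ the function $V_{b_+}$ is a constant multiple of $W=g_{2,q}g_{1,q}(0)-g_{1,q}g_{2,q}(0)$, and since $g_{1,q}$ and $g_{2,q}$ solve the homogeneous equation \eqref{g1g2g3}, we get $(\mathcal{A}-q)V_{b_+}(x)=0$ on those intervals. The nontrivial region is $x>b_+$, where $V_{b_+}'\equiv 1$ and $V_{b_+}''\equiv 0$, so the inequality reduces to $\mu_+-qV_{b_+}(x)\leq 0$. Because $V_{b_+}$ is increasing on $[b_+,\infty)$, it is enough to verify this at $x=b_+$. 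Here I invoke the definition of $b_+$ from Proposition \ref{propb-b+K}, which yields $W''(b_+)=0$; substituting this into the ODE $\tfrac{1}{2}W''+\mu_+W'-qW=0$ valid on $(a,\infty)$ gives $\mu_+W'(b_+)=qW(b_+)$, i.e.\ $\mu_+=q\,V_{b_+}(b_+)$. Hence $\mu_+-qV_{b_+}(x)\leq \mu_+-qV_{b_+}(b_+)=0$ for all $x>b_+$.

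Inequality \eqref{hjb2} is immediate from hypothesis (ii): for $0\leq x\leq b_+$, $1-V_{b_+}'(x)=1-W'(x)/W'(b_+)\leq 0$ because $W'(b_+)$ is the global minimum of $W'$ on $\mathbb{R}_+\setminus\{a\}$, and for $x>b_+$ we have $1-V_{b_+}'(x)=0$. For \eqref{hjb3}, identity \eqref{wa-a+} of Proposition \ref{prop ww} gives
\[(1+\beta)V_{b_+}'(a+)-(1-\beta)V_{b_+}'(a-)=\frac{(1+\beta)W'(a+)-(1-\beta)W'(a-)}{W'(b_+)}=0.\]
Together with the vacuous \eqref{hjb4}, all four HJB inequalities hold, and Lemma \ref{lem hjb}(ii) yields the claim.

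The only genuine obstacle is the verification on $(b_+,\infty)$, where one has to convert the algebraic fact $W''(b_+)=0$ into the tight identity $\mu_+=q\,W(b_+)/W'(b_+)$ via the ODE on the upper regime. Once this is in place, the proof is a direct analogue of Theorem \ref{thm vb-}, with $\mu_+$ and the upper-region ODE playing the roles of $\mu_-$ and the lower-region ODE.
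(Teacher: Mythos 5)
Your proposal is correct and follows essentially the same route as the paper's proof: region-by-region verification of \eqref{hjb1} using the fact that $V_{b_+}$ is proportional to $W$ below $b_+$, the key identity $V_{b_+}(b_+)=\mu_+/q$ obtained from $W''(b_+)=0$ together with the ODE on $(a,\infty)$, condition (ii) for \eqref{hjb2}, and \eqref{wa-a+} for \eqref{hjb3}. The only cosmetic difference is that you argue $(\mathcal{A}-q)V_{b_+}=0$ on $(a,b_+)$ directly from $W$ being a combination of solutions of \eqref{g1g2g3}, whereas the paper substitutes the explicit exponential form of $W$ and invokes \eqref{dfgh212}; these are the same computation.
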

\begin{proof}
We now show that $V_{b_+}$ satisfies the HJB inequality \eqref{hjb1}. For $0\leq x< a$, similar to the proof in Theorem \ref{thm vb-} for $0\leq x \leq b_-$, we can obtain
\begin{eqnarray*}
&&\frac{1}{2}V_{b_+}''(x)+\mu_-V_{b_+}'(x)-qV_{b_+}(x)
=0.
\end{eqnarray*}
For $a<x\leq b_+$,
\begin{align*}
W(x)&=\big(c_1(q) e^{-\rho_2^-a}+(1-c_1(q)) e^{-\rho_1^-a}\big)(1-c_2(q))e^{\rho_2^+(x-a)}\\
&\ \ \ -\big((1-c_1(q)c_2(q))e^{-\rho_2^-a}-c_2(q) (1-c_1(q)) e^{-\rho_1^-a}\big)e^{\rho_1^+(x-a)}.
\end{align*}
By \eqref{dfgh212}, \eqref{wa221} and \eqref{b22} we have
\begin{eqnarray*}
&&\frac{1}{2}V_{b_+}''(x)+\mu_+V_{b_+}'(x)-qV_{b_+}(x)=0.
\end{eqnarray*}
By condition (i) and the definition of $b_+$, we get $W''(b_+)=0$, and then $V_{b_+}''(b_+)=0$. From $V_{b_+}'(b_+)=1$ it follows that $V_{b_+}(b_+)=\frac{\mu_+}{q}$. For $x>b_+$, based on $V_{b_+}'(x)=1$ and $V_{b_+}''(x)=0$, using the fact that $V_{b_+}(x)>V_{b_+}(b_+)$, we get
\begin{eqnarray*}
\frac{1}{2}V_{b_+}''(x)+\mu_+V_{b_+}'(x)-q V_{b_+}(x)
=\mu_+-q V_{b_+}(x)< \mu_+-q V_{b_+}(b_+)=0.
\end{eqnarray*}
So, \eqref{hjb1} holds. Besides that, by condition (ii),
for $x\in[0,a)\cup(a,b_+]$, we have
 $$1-V_{b_+}'(x)=1-\frac{W'(x)}{W'(b_+)}\leq 1-\frac{W'(b_+)}{W'(b_+)}=0.$$
For $x>b_+$, since $V_{b_+}'(x)=1$, we have $1-V_{b_+}'(x)=0$. Thus, \eqref{hjb2} holds.
By \eqref{wa-a+} we get
$$(1+\beta) V_{b_+}'(a+)-(1-\beta) V_{b_+}'(a-)
={(W'(b_+))}^{-1}\big((1+\beta) W'(a+)-(1-\beta)W'(a-)\big)=0.$$
Thus, \eqref{hjb3} holds. Since $\mathcal{P}_{\pi_{b_+}}=\emptyset$, we have \eqref{hjb4} holds. Therefore, the HJB inequalities \eqref{hjb1}-\eqref{hjb4} hold for $V_{b_+}$ under conditions (\romannumeral1)-(\romannumeral2).
\end{proof}

\section{Optimal band strategies}\label{band_strategy}
In this section we consider a class of band strategies, denoted by $\pi_{b_1, a_1, b_2}$ for $0\leq b_1\leq a_1\leq b_2$, that involve two dividend barriers at levels $b_1$ and $b_2$, respectively. Such a dividend strategy can be described as follows. If the surplus level is above $b_2$, a lump-sum payment is made to bring the surplus level to $b_2$. If the surplus takes values in $(a_1, b_2]$, a dividend barrier at level $b_2$ is imposed until the surplus first reaches level $a_1$. If the surplus takes values in $(b_1, a_1]$, a lump-sum payment is made to bring the surplus  to $b_1$. If the surplus takes values in $(0, b_1]$, a dividend barrier at level $b_1$ is imposed until ruin occurs.                                                                                                                                                                                                                                                                                                                                                                                                                                                                                                                                                                                                                                                                                                                                                                                                                                                                                                                                                                                             We refer to \cite{Azcue2005} for introductions on band strategies.
Write $V_{b_1, a_1, b_2}(x)$ for the expected total amount of discounted dividends before ruin with band strategy $\pi_{b_1, a_1, b_2}$ and $X_0=x$, and replace $\mathcal{P}_{\pi}$ as defined in Definition \ref{defpi2}, with $\mathcal{P}_{\pi_{b_1, a_1, b_2}}$.
\subsection{Expected discounted dividend function for band strategies}
\begin{lem}\label{thm vbc2}
	For $x\in\mathbb{R}_+$, $0\leq b_1< a< b_2$ and $0\leq b_1\leq a_1\leq b_2$, we have
\begin{eqnarray*}
		&V_{b_1,a_1, b_2}(x)=\left \{
		\begin{array}{ll}
			\frac{W(x)}{W'(b_1)} & \mathrm{for}\ x\in[0, b_1), \\
x-b_1+\frac{W(b_1)}{W'(b_1)} & \mathrm{for}\ x\in[b_1, a_1), \\
 \frac{w(x, a_1)}{w_{b_2}(b_2, a_1)}+\big(a_1-b_1+\frac{W(b_1)}{W'(b_1)}\big)
 \frac{w_{b_2}(b_2, x)}{w_{b_2}(b_2, a_1)} & \mathrm{for}\ x\in[a_1, b_2), \\
x-b_2+\frac{w(b_2, a_1)}{w_{b_2}(b_2, a_1)}+\big(a_1-b_1+\frac{W(b_1)}{W'(b_1)}\big) \frac{w_{b_2}(b_2, b_2)}{w_{b_2}(b_2, a_1)} & \mathrm{for}\ x\in[b_2, \infty),
		\end{array} \right.
	\end{eqnarray*}
where $w(x,y)$, $w_x(x,y)$ and $W(x)$ are defined by \eqref{wxy}, \eqref{wxpart} and \eqref{sca 1}, respectively.
\end{lem}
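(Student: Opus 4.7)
The approach is to partition $\mathbb{R}_+$ into the four intervals $[0,b_1)$, $[b_1,a_1)$, $[a_1,b_2)$ and $[b_2,\infty)$ prescribed by the band strategy and to treat each separately, pasting the pieces together via the strong Markov property at the relevant transition times.

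Three of the four regions are immediate. For $x\in[0,b_1)$ the controlled surplus never exceeds $b_1$ before ruin (it is instantaneously pushed down whenever it would enter $(b_1,a_1]$), so it coincides pathwise with $U^{\pi_{b_1}}$; Lemma~4.1 then yields $V_{b_1,a_1,b_2}(x)=W(x)/W'(b_1)$. For $x\in[b_1,a_1)$ the strategy prescribes an instantaneous lump payment of $x-b_1$ that drops the surplus to $b_1$, giving $V_{b_1,a_1,b_2}(x)=(x-b_1)+V_{b_1}(b_1)$. For $x\in[b_2,\infty)$ an analogous lump of $x-b_2$ reduces the computation to the value at $b_2$, namely $V_{b_1,a_1,b_2}(x)=(x-b_2)+V_{b_1,a_1,b_2}(b_2)$.

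The substantive case is $x\in[a_1,b_2)$. Here the controlled process runs with a reflecting barrier at $b_2$ until the first hitting time $\hat\tau_{a_1}$; at that instant a lump of $a_1-b_1$ is paid and the process restarts at $b_1$ under $\pi_{b_1}$. The strong Markov property then gives
\begin{equation*}
V_{b_1,a_1,b_2}(x)=\psi(x)+\phi(x)\Big(a_1-b_1+\frac{W(b_1)}{W'(b_1)}\Big),
\end{equation*}
with $\phi(x):=\mathbb{E}_x[e^{-q\hat\tau_{a_1}}]$ and $\psi(x):=\mathbb{E}_x\big[\int_0^{\hat\tau_{a_1}}e^{-qs}\,\mathrm{d}D_s^{\pi_{b_2}}\big]$. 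Both satisfy $(\mathcal{A}-q)u=0$ on $(a_1,b_2)\setminus\{a\}$ together with the skew transmission $(1+\beta)u'(a+)=(1-\beta)u'(a-)$, hence lie in the two-dimensional span of $g_{1,q}$ and $g_{2,q}$. They are separated by the boundary data $\phi(a_1)=1$, $\phi'(b_2-)=0$ (the Neumann condition at the reflecting barrier) and $\psi(a_1)=0$, $\psi'(b_2-)=1$ (dividends accumulate as local time at $b_2$). Solving the resulting $2\times 2$ linear systems gives $\phi(x)=w_{b_2}(b_2,x)/w_{b_2}(b_2,a_1)$ and $\psi(x)=w(x,a_1)/w_{b_2}(b_2,a_1)$, which after substitution yields the stated expression on $[a_1,b_2)$ and, via the lump-sum adjustment, on $[b_2,\infty)$.

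The main obstacle is the rigorous derivation of $\phi$ and $\psi$, since one must simultaneously reconcile the skew interface at $a$ with the Neumann-type behaviour at the reflecting barrier $b_2$. The cleanest route mimics the proof of Theorem~2.1: apply the It\^o--Tanaka--Meyer formula to $e^{-qt}u(U_t^{\pi_{b_1,a_1,b_2}})$ for a candidate $u\in\mathrm{span}(g_{1,q},g_{2,q})$, observe that the drift--diffusion term vanishes by the ODE, the local time at $a$ is killed by the skew matching, and the local time at $b_2$ is cancelled by (or generates) the dividend integral exactly when $u'(b_2-)$ takes the prescribed value. Localising at $T_n\wedge\hat\tau_{a_1}$, taking expectations, and letting $n\uparrow\infty$ yields the desired martingale representation; uniqueness of solutions to the two-point boundary value problem then identifies $\phi$ and $\psi$ as above.
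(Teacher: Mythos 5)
Your proposal is correct, and its skeleton (four regions, lump-sum reductions on $[b_1,a_1)$ and $[b_2,\infty)$, identification with the $b_1$-barrier value on $[0,b_1)$, and a strong Markov decomposition at $\hat\tau_{a_1}$ for the middle region) matches the paper's. The substantive difference is how the two coefficients of that decomposition are computed on $[a_1,b_2)$. The paper first decomposes further via the two-sided exit identities \eqref{e1}--\eqref{e2} to express $V_{b_1,a_1,b_2}(x)$ through $V_{b_1,a_1,b_2}(a_1)$ and $V_{b_1,a_1,b_2}(b_2)$, then evaluates $V_{b_1,a_1,b_2}(b_2)$ by importing the fluctuation identities \eqref{val 6} and \eqref{val 5} for drifted Brownian motion reflected at $b_2$ (citing Renaud--Zhou type results), treating the cases $a\leq a_1$ and $a>a_1$ separately (the latter requires solving a $2\times 2$ system because the reflected process crosses the skew interface), and finally collapses everything with the algebraic identity \eqref{w1w2w3rela}. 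You instead identify $\phi(x)=\mathbb{E}_x[e^{-q\hat\tau_{a_1}}]$ and $\psi(x)=\mathbb{E}_x[\int_0^{\hat\tau_{a_1}}e^{-qs}\,\mathrm{d}D_s]$ directly as the unique elements of $\mathrm{span}(g_{1,q},g_{2,q})$ satisfying the boundary data $\phi(a_1)=1,\ \phi'(b_2-)=0$ and $\psi(a_1)=0,\ \psi'(b_2-)=1$, via an It\^o--Tanaka--Meyer martingale argument in which the regulator at $b_2$ contributes $-u'(b_2-)\int e^{-qs}\,\mathrm{d}D_s$. This is a genuinely different and arguably cleaner route: it treats $a_1<a$ and $a_1\geq a$ uniformly, dispenses with the imported reflected-BM formulas and the final algebraic simplification, and one checks immediately that $w_{b_2}(b_2,x)/w_{b_2}(b_2,a_1)$ and $w(x,a_1)/w_{b_2}(b_2,a_1)$ satisfy exactly these boundary conditions. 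Two small points you should make explicit to close the argument: the well-posedness of the two-point boundary value problem requires $w_{b_2}(b_2,a_1)\neq 0$, which the paper establishes separately (Proposition \ref{wb2b2a1u2}); and the It\^o--Tanaka computation for the reflected controlled process, which you only sketch, needs the same localization and square-integrability care as in the proof of Theorem \ref{lap 1} and the Verification Lemma, so it is credible but should be written out.
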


Its proof is deferred to Appendix \ref{thm vbc21}.
For $x\in[0,a_1 \wedge a)$, the monotonicity of $W'(x)$ has been described in Lemma \ref{lem W} (I), and then,
the monotonicity of $V_{b_1,a_1, b_2}'(x)$ for $x\in[0,a_1)$ is determined. We first present the following proposition, which will be used in the subsequent analysis of the monotonicity of $V_{b_1,a_1, b_2}'(x)$ for $x\in(a_1, \infty)$. Its proof is deferred to Appendix \ref{wb2b2a1u1}.
\begin{prop}\label{wb2b2a1u2}
For any $0< a< a_1\leq b_2$, we have
\begin{align}\label{wb2b1fu3}
w_{b_2}(b_2, a_1)
&=\big(1-c_{2}(q)\big)e^{(\rho_2^++\rho_1^+)(a_1-a)}
\big(\rho_2^+ e^{\rho_2^+(b_2-a_1)}-\rho_1^+ e^{\rho_1^+(b_2-a_1)}\big).
\end{align}
For any $0\leq a_1\leq a< b_2$, we have
\begin{align}\label{wb2b1fu1}
w_{b_2}(b_2, a_1)
&=\rho_2^+ \big(1-c_{2}(q)\big)\Big(c_{1}(q)e^{\rho_2^-(a_1-a)}+\big(1-c_{1}(q)\big)e^{\rho_1^-(a_1-a)}\Big)
e^{\rho_2^+(b_2-a)}\\
&\ \ -\rho_1^+\Big(\big(1-c_{1}(q)c_{2}(q)\big)e^{\rho_2^-(a_1-a)}
-c_{2}(q)\big(1-c_{1}(q)\big)e^{\rho_1^-(a_1-a)}\Big) e^{\rho_1^+(b_2-a)}.\nonumber
\end{align}
Further, $w_{b_2}(b_2, a_1)>0$ for any $0< a<b_2$ and $0\leq a_1\leq b_2$.
\end{prop}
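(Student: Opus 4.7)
The plan is to compute $w_{b_2}(b_2, a_1) = g_{2,q}'(b_2)\,g_{1,q}(a_1) - g_{1,q}'(b_2)\,g_{2,q}(a_1)$ directly from the definition \eqref{wxy} and the piecewise formulas \eqref{g1}--\eqref{g2}, then establish positivity by a probabilistic argument. Because $b_2>a$ in both cases, I would first record the upper-regime derivatives $g_{1,q}'(b_2) = \rho_1^+ e^{\rho_1^+(b_2-a)}$ and $g_{2,q}'(b_2) = (1-c_2(q))\rho_2^+ e^{\rho_2^+(b_2-a)} + c_2(q)\rho_1^+ e^{\rho_1^+(b_2-a)}$, which are used in both cases.

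For Case~1 ($0<a<a_1\leq b_2$), substitute $g_{1,q}(a_1) = e^{\rho_1^+(a_1-a)}$ and $g_{2,q}(a_1) = (1-c_2(q))e^{\rho_2^+(a_1-a)} + c_2(q)e^{\rho_1^+(a_1-a)}$. The two $c_2(q)\rho_1^+$ terms cancel, leaving $(1-c_2(q))\bigl[\rho_2^+ e^{\rho_2^+(b_2-a)+\rho_1^+(a_1-a)} - \rho_1^+ e^{\rho_1^+(b_2-a)+\rho_2^+(a_1-a)}\bigr]$. Rewriting each exponent via $\rho_2^+(b_2-a)+\rho_1^+(a_1-a) = \rho_2^+(b_2-a_1)+(\rho_1^++\rho_2^+)(a_1-a)$, and symmetrically for the second term, factors out $(1-c_2(q))e^{(\rho_1^++\rho_2^+)(a_1-a)}$ and yields \eqref{wb2b1fu3}. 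Positivity in this case is immediate: Lemma~\ref{c1c201} gives $1-c_2(q)>0$, and since $\rho_2^+>0>\rho_1^+$ both $\rho_2^+ e^{\rho_2^+(b_2-a_1)}$ and $-\rho_1^+ e^{\rho_1^+(b_2-a_1)}$ are strictly positive.

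For Case~2 ($0\leq a_1\leq a<b_2$), I would substitute $g_{1,q}(a_1) = c_1(q)e^{\rho_2^-(a_1-a)} + (1-c_1(q))e^{\rho_1^-(a_1-a)}$ and $g_{2,q}(a_1) = e^{\rho_2^-(a_1-a)}$, then group the terms in $w_{b_2}(b_2,a_1)$ by their $e^{\rho_2^+(b_2-a)}$ and $e^{\rho_1^+(b_2-a)}$ coefficients. The coefficient of $e^{\rho_2^+(b_2-a)}$ reduces to $\rho_2^+(1-c_2(q))[c_1(q)e^{\rho_2^-(a_1-a)}+(1-c_1(q))e^{\rho_1^-(a_1-a)}]$, and the coefficient of $e^{\rho_1^+(b_2-a)}$ combines the cross terms from $c_2(q)\rho_1^+ g_{1,q}(a_1)$ and $-\rho_1^+ g_{2,q}(a_1)$ into $-\rho_1^+[(1-c_1(q)c_2(q))e^{\rho_2^-(a_1-a)} - c_2(q)(1-c_1(q))e^{\rho_1^-(a_1-a)}]$, which is exactly \eqref{wb2b1fu1}.

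For the uniform positivity claim, the cleanest route is probabilistic: by Theorem~\ref{lap 1}, for any $a_1<x<b_2$ one has $\mathbb{E}_x[e^{-q\tau_{b_2}};\tau_{b_2}<\tau_{a_1}] = w(x,a_1)/w(b_2,a_1)$, and this Laplace transform is strictly decreasing in $b_2$ because enlarging the upper barrier strictly reduces the discounted probability of reaching it first. Differentiating in $b_2$ gives $-w(x,a_1)\,w_{b_2}(b_2,a_1)/w(b_2,a_1)^2<0$, so $w(x,a_1)$ and $w_{b_2}(b_2,a_1)$ share a common nonzero sign. Letting $x\uparrow b_2$ shows that $w(x,a_1)/w(b_2,a_1)\to 1^-$, so $w(x,a_1)$ and $w(b_2,a_1)$ also share a sign; hence all three are strictly positive.

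The routine obstacle is the bookkeeping in Case~2, where six products must be collected carefully around the two exponentials $e^{\rho_i^+(b_2-a)}$. The genuine subtlety is positivity in Case~2: the coefficient of $e^{\rho_2^+(b_2-a)}$ in \eqref{wb2b1fu1} need not be visibly positive because $c_1(q)$ may be negative (Lemma~\ref{c1c201}), so a closed-form sign analysis would require extra care with the relations \eqref{c1c266}. The probabilistic detour sketched above avoids this sign chase entirely, which is why I would adopt it.
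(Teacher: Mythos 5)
Your derivation of the two formulas \eqref{wb2b1fu3} and \eqref{wb2b1fu1} is correct and is essentially the paper's own computation: substitute the piecewise forms of $g_{i,q}$ and $g_{i,q}'$ into $w_{b_2}(b_2,a_1)=g_{2,q}'(b_2)g_{1,q}(a_1)-g_{1,q}'(b_2)g_{2,q}(a_1)$ and regroup by $e^{\rho_i^+(b_2-a)}$. The positivity argument in Case~1 (three visibly positive factors, using $1-c_2(q)>0$ and $\rho_1^+<0<\rho_2^+$) is also exactly what the paper does.

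The gap is in your probabilistic positivity argument for Case~2, and it is a genuine one. First, strict monotonicity of $b_2\mapsto \mathbb{E}_x[e^{-q\tau_{b_2}};\tau_{b_2}<\tau_{a_1}]$ only yields $\partial_{b_2}\big(w(x,a_1)/w(b_2,a_1)\big)\le 0$ at each point, not $<0$ (a strictly decreasing smooth function can have vanishing derivative at isolated points), so you obtain $w(x,a_1)\,w_{b_2}(b_2,a_1)\ge 0$ rather than a strict common sign. Second, and more seriously, your chain of "share a sign" statements never determines \emph{which} sign: the fact that the Laplace transform $w(x,a_1)/w(b_2,a_1)$ lies in $(0,1)$ and tends to $1$ as $x\uparrow b_2$ is equally consistent with $w(x,a_1)$, $w(b_2,a_1)$ and $w_{b_2}(b_2,a_1)$ all being negative, so "hence all three are strictly positive" is a non sequitur. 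To close this you would need an independent anchor, e.g.\ the explicit lower-regime computation $w(x,a_1)=(1-c_1(q))e^{(\rho_1^-+\rho_2^-)(a_1-a)}\big(e^{\rho_2^-(x-a_1)}-e^{\rho_1^-(x-a_1)}\big)>0$ for $a_1<x\le a$, plus an argument (say, that $b_2\mapsto w_{b_2}(b_2,a_1)$ is a nontrivial two-exponential combination and hence cannot vanish while staying nonnegative unless identically zero) to upgrade $\ge 0$ to $>0$. The paper avoids all of this by the direct sign chase you wanted to skip: it splits into $c_2(q)<0$ and $0\le c_2(q)<1$, using the inequalities \eqref{certw}, \eqref{1c1q11} and the identity $\rho_2^+(1-c_2(q))+\rho_1^+c_2(q)=(1-\beta)\rho_2^-/(1+\beta)>0$ from \eqref{ca66} to exhibit $w_{b_2}(b_2,a_1)$ as a sum or product of strictly positive terms.
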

\subsection{Optimal band strategies}
We now seek to identify the optimal band strategy. For $x\in[0, b_1)$, by Lemma \ref{thm vbc2} we have
\begin{align*}
 V_{b_1,a_1, b_2}''(x)&=\frac{W''(x)}{W'(b_1)}.
\end{align*}
For $x\in[a_1\wedge a, a)$, by Lemma \ref{thm vbc2} we have
\begin{align}
V_{b_1,a_1, b_2}'(x)
 &=\frac{\tilde{K}_{2}(b_1, a_1, b_2){\rho_2^-}e^{\rho_2^-(x-a)}-\tilde{K}_{1}(b_1, a_1, b_2){\rho_1^-}e^{\rho_1^-(x-a)}}{w_{b_2}(b_2, a_1)},\label{vb1a1b266}\\
V_{b_1, a_1, b_2}''(x)&=
\frac{\tilde{K}_{2}(b_1, a_1, b_2){\rho_2^-}^2e^{\rho_2^-(x-a)}-\tilde{K}_{1}(b_1, a_1, b_2){\rho_1^-}^2e^{\rho_1^-(x-a)}}{w_{b_2}(b_2, a_1)},\label{vsecde}
\end{align}
where $g_{1,q}(x)$ and $g_{2,q}(x)$ are given by \eqref{g1} and \eqref{g2}, respectively, and
\begin{align}
&\tilde{K}_{1}(b_1, a_1, b_2):=(1-c_{1}(q))\big(g_{2,q}(a_1)-g_{2,q}'(b_2)V_{b_1,a_1, b_2}(a_1)\big),\label{k1b1a1}\\
&\tilde{K}_{2}(b_1, a_1, b_2)
:=g_{1,q}(a_1)-g_{1,q}'(b_2)V_{b_1,a_1, b_2}(a_1)
-c_{1}(q)\big(g_{2,q}(a_1)-g_{2,q}'(b_2)V_{b_1,a_1, b_2}(a_1)\big).\label{k1b1a2}
\end{align}
For $x\in(a_1 \vee a, b_2)$, by Lemma \ref{thm vbc2} we have
\begin{align}\label{vb1a1b222}
V_{b_1,a_1, b_2}''(x)
 &=
\frac{\hat{K}_{2}(b_1, a_1, b_2){\rho_2^+}^2e^{\rho_2^+(x-a)}-\hat{K}_{1}(b_1, a_1, b_2){\rho_1^+}^2e^{\rho_1^+(x-a)}}{w_{b_2}(b_2, a_1)},
\end{align}
where
\begin{align}
&\hat{K}_{1}(b_1, a_1, b_2):=g_{2,q}(a_1)-g_{2,q}'(b_2)V_{b_1,a_1, b_2}(a_1)
-c_{2}(q)\big(g_{1,q}(a_1)-g_{1,q}'(b_2)V_{b_1,a_1, b_2}(a_1)\big),\nonumber\\
&\hat{K}_{2}(b_1, a_1, b_2)
:=\big(1-c_{2}(q)\big)\big(g_{1,q}(a_1)-g_{1,q}'(b_2)V_{b_1,a_1, b_2}(a_1)\big).
\label{f11}
\end{align}
The proof for the next proposition is deferred to Appendix \ref{va1vb21kl34}.
\begin{prop}\label{va1vb21kl}
For any $0\leq b_1< a< b_2$ and $0\leq b_1\leq a_1\leq b_2$, we have
\[V_{b_1,a_1, b_2}'(a_1-)=1 \ \ \mathrm{and} \ \ V_{b_1,a_1, b_2}'(b_2)=1.\]
\end{prop}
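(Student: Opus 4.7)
The plan is to derive both derivative identities directly from the piecewise expression for $V_{b_1,a_1,b_2}$ provided by Lemma \ref{thm vbc2}, together with the product structure of $w$ given in \eqref{wxy}.

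For $V_{b_1,a_1,b_2}'(a_1-)=1$, the argument is almost immediate: on the interval $[b_1,a_1)$ Lemma \ref{thm vbc2} gives the affine expression $x-b_1+W(b_1)/W'(b_1)$, whose derivative equals $1$ on the whole interval, so taking the left-hand limit as $x\uparrow a_1$ produces $1$. No further input is needed.

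For $V_{b_1,a_1,b_2}'(b_2)=1$, I would establish both one-sided derivatives separately. The right derivative is immediate because the formula for $V_{b_1,a_1,b_2}$ on $[b_2,\infty)$ is affine in $x$ with slope $1$, yielding $V_{b_1,a_1,b_2}'(b_2+)=1$. For the left derivative I would differentiate the representation
\[
V_{b_1,a_1,b_2}(x)=\frac{w(x,a_1)}{w_{b_2}(b_2,a_1)}+C\,\frac{w_{b_2}(b_2,x)}{w_{b_2}(b_2,a_1)},\qquad C:=a_1-b_1+\frac{W(b_1)}{W'(b_1)},
\]
which is valid on $[a_1,b_2)$. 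Differentiating in $x$ produces two contributions. The first, $w_x(x,a_1)/w_{b_2}(b_2,a_1)$, evaluates at $x=b_2$ to $w_{b_2}(b_2,a_1)/w_{b_2}(b_2,a_1)=1$ purely by the notational convention \eqref{wxpart}. For the second, I would expand $w_{b_2}(b_2,x)=g_{2,q}'(b_2)g_{1,q}(x)-g_{1,q}'(b_2)g_{2,q}(x)$ using \eqref{wxy}, differentiate in $x$, and evaluate at $x=b_2$ to obtain $g_{2,q}'(b_2)g_{1,q}'(b_2)-g_{1,q}'(b_2)g_{2,q}'(b_2)=0$. Consequently $V_{b_1,a_1,b_2}'(b_2-)=1$, matching the right derivative.

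The only point requiring care is keeping the notation straight: $w_{b_2}(b_2,\cdot)$ denotes $(\partial_1 w)$ evaluated at first coordinate $b_2$, so the identification $w_x(b_2,a_1)=w_{b_2}(b_2,a_1)$ is a tautology rather than a calculation, and the vanishing of the second contribution is really a manifestation of the antisymmetric product structure of $w$. Because the hypothesis $b_2>a$ places $b_2$ strictly inside the smooth region of both $g_{1,q}$ and $g_{2,q}$, no piecewise-differentiation issues arise at the evaluation point. I do not anticipate any substantive obstacle; the proposition is essentially a direct verification from the formulas already in hand.
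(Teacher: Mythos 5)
Your proposal is correct and follows essentially the same route as the paper: the one-sided derivatives at $a_1-$ and $b_2+$ come from the affine pieces of Lemma \ref{thm vbc2}, and $V_{b_1,a_1,b_2}'(b_2-)=1$ follows because the first contribution evaluates to $w_{b_2}(b_2,a_1)/w_{b_2}(b_2,a_1)=1$ while the second vanishes since $w_{x,y}(x,y)\vert_{x=y=b_2}=g_{2,q}'(b_2)g_{1,q}'(b_2)-g_{1,q}'(b_2)g_{2,q}'(b_2)=0$, which is exactly the identity $w_{b_2,b_2}(b_2,b_2)=0$ used in the paper. No substantive difference.
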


Note that $V_{b_1,a_1, b_2}'(a_1-)=1$ by Proposition \ref{va1vb21kl}. Then the HJB inequality \eqref{hjb4} requires that $V_{b_1,a_1, b_2}'(a_1+)\leq1$, and \eqref{hjb2} further requires that  $V_{b_1,a_1, b_2}'(a_1+)=1$, leading to  $V_{b_1,a_1, b_2}'(a_1)=1$.
On the other hand, since $V_{b_1,a_1, b_2}'(x)=1$ for $x\in[b_2, \infty)$ by definition, inequality \eqref{hjb2} requires that
$V_{b_1,a_1, b_2}'(x)\geq1$ for $x\in(a, b_2)$, which implies that $b_2$ is a local minimum of $V_{b_1,a_1, b_2}'(x)$ for $x\in(a, \infty)$, leading to  $V_{b_1,a_1, b_2}''(b_2)=0$.
Therefore, to obtain the optimal band strategy we first consider $a_1$ and $b_2$ satisfying the following equations and then identify additional conditions for all the HJB inequalities to hold.
\begin{align}
& V_{b_1,a_1, b_2}'(a_1)
=1,\label{b11a1b2}\\
& V_{b_1,a_1, b_2}''(b_2)
=0.\label{asdfg}
	\end{align}
Also notice that if $b_2=a+$, then $V_{b_1,a_1, b_2}(x)$ is not differentiable at $x=a$ and \eqref{asdfg} is not relevant.

To analyze the monotonicity of $V_{b_1,a_1, b_2}'(x)$ for $x\in(a_1, \infty)$, we present three lemmas. Their proofs are deferred to Appendix \ref{b1a1225}, \ref{b1a12} and \ref{lem k12222}, respectively. Recall that, for $W''(x)$ given by \eqref{b11}, $W''(x)=0$ has a unique solution $b_-\in \mathbb{R}$ given in \eqref{b-}. By Lemma \ref{lem W} (I), for $0\leq x< a_1\leq a$, if $b_-\leq0$, then $W'(x)$ has a unique minimum at $0$, whereas if $0<b_-<a_1$, then $W'(x)$ has a unique minimum at $b_-$.
\begin{lem}\label{lem b1a125}
For fixed $b_1$ and $b_2$ satisfying $0\leq b_1<a< b_2$, if there exists $a_1\in[b_1, a)$ such that $V_{b_1,a_1, b_2}'(a_1)=1$, then we have
\begin{align}
&\tilde{K}_{1}(b_1, a_1, b_2)
=\frac{1-\rho_2^-V_{b_1, a_1, b_2}(a_1)}{\rho_2^- -\rho_1^-}
w_{b_2}(b_2, a_1) e^{-\rho_1^-(a_1-a)},\label{fde33216}\\
&\tilde{K}_{2}(b_1, a_1, b_2)
=\frac{1-\rho_1^-V_{b_1, a_1, b_2}(a_1)}{\rho_2^- -\rho_1^-}
w_{b_2}(b_2, a_1) e^{-\rho_2^-(a_1-a)},\label{fde3311}\\
&V_{b_1, a_1, b_2}''(a_1)=2\big(-\mu_-+qV_{b_1, a_1, b_2}(a_1)\big),\label{v2b12a1er}
\end{align}
where $w_{b_2}(b_2, a_1)$ is defined in \eqref{wb2b1fu1}.
\end{lem}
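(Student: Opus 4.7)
The plan is to read off a two-by-two linear system in $\tilde{K}_1(b_1,a_1,b_2)$ and $\tilde{K}_2(b_1,a_1,b_2)$ from the known values of $V_{b_1,a_1,b_2}(a_1)$ and $V_{b_1,a_1,b_2}'(a_1)$, solve it, and then substitute into the second-derivative formula \eqref{vsecde}.

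First I would substitute \eqref{g1} and \eqref{g2} into the representation of $V_{b_1,a_1,b_2}$ on $[a_1, a)$ from Lemma \ref{thm vbc2} and regroup the result in the basis $\{e^{\rho_2^-(x-a)}, e^{\rho_1^-(x-a)}\}$. Matching against the definitions \eqref{k1b1a1}--\eqref{k1b1a2} of $\tilde{K}_1$ and $\tilde{K}_2$ produces the companion identity of \eqref{vb1a1b266},
\begin{equation*}
V_{b_1,a_1,b_2}(x)\, w_{b_2}(b_2, a_1) = \tilde{K}_2(b_1,a_1,b_2)\, e^{\rho_2^-(x-a)} - \tilde{K}_1(b_1,a_1,b_2)\, e^{\rho_1^-(x-a)},\qquad x \in [a_1, a).
\end{equation*}
Evaluating this at $x = a_1$, and evaluating \eqref{vb1a1b266} at $x = a_1$ under the hypothesis $V_{b_1,a_1,b_2}'(a_1) = 1$, gives the linear system
\begin{align*}
\tilde{K}_2\, e^{\rho_2^-(a_1-a)} - \tilde{K}_1\, e^{\rho_1^-(a_1-a)} &= V_{b_1,a_1,b_2}(a_1)\, w_{b_2}(b_2, a_1), \\
\rho_2^-\,\tilde{K}_2\, e^{\rho_2^-(a_1-a)} - \rho_1^-\,\tilde{K}_1\, e^{\rho_1^-(a_1-a)} &= w_{b_2}(b_2, a_1)
\end{align*}
in the unknowns $\tilde{K}_1 e^{\rho_1^-(a_1-a)}$ and $\tilde{K}_2 e^{\rho_2^-(a_1-a)}$. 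Cramer's rule (or straightforward elimination) immediately yields \eqref{fde33216} and \eqref{fde3311}.

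For the second-derivative identity \eqref{v2b12a1er}, I would evaluate \eqref{vsecde} at $x = a_1$ and plug in the two expressions just obtained. After factoring $\rho_2^- - \rho_1^-$ out of the numerator, the right-hand side collapses to
\begin{equation*}
V_{b_1,a_1,b_2}''(a_1) = (\rho_1^- + \rho_2^-) - \rho_1^-\rho_2^-\, V_{b_1,a_1,b_2}(a_1).
\end{equation*}
Vieta's formulas applied to the characteristic equation \eqref{dfgh} give $\rho_1^- + \rho_2^- = -2\mu_-$ and $\rho_1^-\rho_2^- = -2q$, and substitution delivers \eqref{v2b12a1er}. No step is genuinely hard; the one place demanding care is the initial regrouping, where the constant $c_1(q)$ appearing in $g_{1,q}$ must combine correctly with $g_{1,q}(a_1)$, $g_{2,q}(a_1)$, $g_{1,q}'(b_2)$, $g_{2,q}'(b_2)$ so that the coefficients read off in the $\{e^{\rho_2^-(x-a)}, e^{\rho_1^-(x-a)}\}$ basis match $\tilde{K}_2$ and $-\tilde{K}_1$ respectively, as defined in \eqref{k1b1a1}--\eqref{k1b1a2}.
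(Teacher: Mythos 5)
Your proposal is correct, and it reaches \eqref{fde33216}--\eqref{fde3311} by a genuinely cleaner route than the paper. The paper first expands $\tilde{K}_1$ and $\tilde{K}_2$ through the explicit exponential forms of $g_{1,q}$, $g_{2,q}$ (its equations \eqref{kkk111}--\eqref{kkk222}), then combines the hypothesis $V_{b_1,a_1,b_2}'(a_1)=1$ with the closed-form expression \eqref{wb2b1fu1} for $w_{b_2}(b_2,a_1)$ to extract an intermediate identity \eqref{asdw11}, and finally substitutes that identity back into the expanded $\tilde{K}_i$ and regroups a large number of terms. You instead exploit the undifferentiated companion of \eqref{vb1a1b266}, namely $V_{b_1,a_1,b_2}(x)\,w_{b_2}(b_2,a_1)=\tilde{K}_2 e^{\rho_2^-(x-a)}-\tilde{K}_1 e^{\rho_1^-(x-a)}$ on $[a_1,a)$ (which is precisely the regrouping already displayed in the paper's proof of Theorem \ref{thm vbb5}, consistent with the definitions \eqref{k1b1a1}--\eqref{k1b1a2}), so that the known value $V_{b_1,a_1,b_2}(a_1)$ and the hypothesis $V_{b_1,a_1,b_2}'(a_1)=1$ produce a nonsingular $2\times 2$ linear system in $\tilde{K}_1 e^{\rho_1^-(a_1-a)}$ and $\tilde{K}_2 e^{\rho_2^-(a_1-a)}$ with determinant $\rho_1^--\rho_2^-\neq 0$. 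This buys two things: it bypasses the explicit formula \eqref{wb2b1fu1} entirely, and it replaces the paper's page of term-by-term algebra with a two-line elimination. I checked the solve: it does reproduce \eqref{fde33216}--\eqref{fde3311} exactly. The final step --- substituting into \eqref{vsecde} at $x=a_1$ and invoking $\rho_1^-+\rho_2^-=-2\mu_-$, $\rho_1^-\rho_2^-=-2q$ from \eqref{dfgh} --- is identical to the paper's. No gaps.
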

\begin{lem}\label{lem b1a1}
For fixed $b_1$ and $b_2$ satisfying $0\leq b_1<a< b_2$, where $b_1=b_-$ for $b_-\in(0,a)$ and $b_1=0$ for $b_-\leq 0$, if there exists $a_1\in[b_1, a)$ such that $V_{b_1,a_1, b_2}'(a_1)=1$, then $V_{b_1, a_1, b_2}'(x)$ is strictly increasing for $x\in(a_1, a)$.
\end{lem}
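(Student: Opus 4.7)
The goal is to prove that $V_{b_1,a_1,b_2}'$ is strictly increasing on $(a_1,a)$. Since $V_{b_1,a_1,b_2}$ is smooth on that interval by Lemma~\ref{thm vbc2} (the only interior break points are at $a$ and $b_2$, both outside $(a_1,a)$), this reduces to showing $V_{b_1,a_1,b_2}''(x)>0$ for every $x\in(a_1,a)$. I would start from the closed-form expression (\ref{vsecde}), apply the hypothesis $V_{b_1,a_1,b_2}'(a_1)=1$ so that Lemma~\ref{lem b1a125} delivers the clean expressions (\ref{fde33216}) and (\ref{fde3311}) for $\tilde K_1$ and $\tilde K_2$, and cancel the common positive factor $w_{b_2}(b_2,a_1)$ (positive by Proposition~\ref{wb2b2a1u2}). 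Writing $\eta:=V_{b_1,a_1,b_2}(a_1)\geq 0$, this collapses to
\[
V_{b_1,a_1,b_2}''(x)=\frac{(\rho_2^-)^2(1-\rho_1^-\eta)\,e^{\rho_2^-(x-a_1)}-(\rho_1^-)^2(1-\rho_2^-\eta)\,e^{\rho_1^-(x-a_1)}}{\rho_2^--\rho_1^-}.
\]

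Next, I would multiply through by the positive function $e^{-\rho_1^-(x-a_1)}$ and introduce the auxiliary
\[
h(x):=(\rho_2^-)^2(1-\rho_1^-\eta)\,e^{(\rho_2^--\rho_1^-)(x-a_1)}-(\rho_1^-)^2(1-\rho_2^-\eta),
\]
so that $V_{b_1,a_1,b_2}''(x)$ has the same sign as $h(x)$. Because $\rho_1^-<0<\rho_2^-$ and $\eta\geq 0$, the coefficient $(\rho_2^-)^2(1-\rho_1^-\eta)$ is strictly positive and $\rho_2^--\rho_1^->0$, so $h$ is strictly increasing on $\mathbb{R}$. It therefore suffices to check $h(a_1)\geq 0$, i.e.\ $V_{b_1,a_1,b_2}''(a_1)\geq 0$, which by (\ref{v2b12a1er}) is exactly $q\eta\geq\mu_-$.

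To establish $q\eta\geq\mu_-$ I would split on the definition of $b_1$. If $b_-\in(0,a)$ and $b_1=b_-$, then (\ref{mulg}) from the proof of Theorem~\ref{thm vb-} gives $W(b_-)/W'(b_-)=\mu_-/q$, and the linear formula on $[b_1,a_1)$ in Lemma~\ref{thm vbc2} yields $\eta=a_1-b_-+\mu_-/q$, so $q\eta-\mu_-=q(a_1-b_-)\geq 0$. If instead $b_-\leq 0$ and $b_1=0$, the identity $W(0)=0$ (immediate from (\ref{sca 1})) gives $\eta=a_1$, and Proposition~\ref{b1min}(i) forces $\mu_-\leq 0$, hence $q\eta-\mu_-\geq 0$ again. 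Combined with the strict monotonicity of $h$, this produces $h(x)>0$ on $(a_1,a)$, so $V_{b_1,a_1,b_2}''(x)>0$ and $V_{b_1,a_1,b_2}'$ is strictly increasing there.

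\textbf{Anticipated difficulty.} The only delicate configuration is the degenerate case $a_1=b_1$, where $V_{b_1,a_1,b_2}''(a_1)=0$ and a purely pointwise convexity argument at $a_1$ would not upgrade to strict monotonicity on an open neighbourhood. The device of passing to the auxiliary function $h$ resolves this: \emph{strict} monotonicity of $h$ lifts the weak inequality $h(a_1)\geq 0$ to the strict inequality $h(x)>0$ on the open interval $(a_1,a)$, so no separate analysis of the boundary behaviour at $a_1$ is needed.
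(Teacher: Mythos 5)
Your proof is correct, and it reaches the conclusion by a cleaner route than the paper's. Both arguments rest on the same two pillars: the explicit expressions \eqref{fde33216}--\eqref{fde3311} and the identity \eqref{v2b12a1er} from Lemma~\ref{lem b1a125}, and the boundary inequality $qV_{b_1,a_1,b_2}(a_1)\geq\mu_-$, which you and the paper verify in essentially the same way (via \eqref{mulg} when $b_1=b_-\in(0,a)$, and via $W(0)=0$ together with Proposition~\ref{b1min}(i) when $b_1=0$). Where you diverge is in how positivity of $V''$ is propagated from $a_1$ into the open interval: the paper works directly with the representation \eqref{vsecde} in terms of $\tilde K_1$ and $\tilde K_2$, splits into cases according to the sign of $\tilde K_1$ (concluding immediately when $\tilde K_1\le 0$, and computing $V'''>0$ to show $V''$ is increasing when $\tilde K_1>0$, which requires the additional estimate $\tilde K_2>\tilde K_1>0$). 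You instead substitute the closed forms of $\tilde K_1,\tilde K_2$ at the outset, cancel the factor $w_{b_2}(b_2,a_1)$, and factor out $e^{\rho_1^-(x-a_1)}$ to obtain an auxiliary function $h$ with a single exponential term whose coefficient $(\rho_2^-)^2(1-\rho_1^-\eta)$ is manifestly positive. This makes the monotonicity of $h$ automatic, eliminates the case split on the sign of $\tilde K_1$ and the third-derivative computation, and handles the degenerate configuration $a_1=b_1$ (where $V''(a_1)=0$) in the same stroke, since strict monotonicity of $h$ upgrades $h(a_1)\ge 0$ to $h(x)>0$ on $(a_1,a)$. The trade-off is negligible; if anything your version is the more economical of the two.
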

\begin{lem}\label{lem k12122}
For fixed $b_1$ and $a_1$ with $0\leq b_1 < a$ and $0\leq b_1\leq a_1$, if there exists $b_2>(a_1 \vee a)$ such that $V_{b_1,a_1, b_2}''(b_2)=0$, then $V_{b_1,a_1, b_2}'(x)$ is strictly decreasing for $x\in[a_1, b_{2})\cap (a, b_2)$.
\end{lem}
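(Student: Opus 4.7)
The plan is to exploit the fact that on the open interval $(a, b_2)$ the function $V_{b_1,a_1,b_2}$ satisfies the homogeneous ODE $\tfrac{1}{2}V'' + \mu_+ V' - qV = 0$ (with characteristic roots $\rho_1^+ < 0 < \rho_2^+$), so it admits the explicit representation
\[
V_{b_1,a_1,b_2}(x) \;=\; \hat D_1\, e^{\rho_1^+(x-a)} + \hat D_2\, e^{\rho_2^+(x-a)}
\]
for constants $\hat D_1, \hat D_2$ that coincide, up to the positive factor $w_{b_2}(b_2,a_1)^{-1}$ (positive by Proposition \ref{wb2b2a1u2}), with $-\hat K_1(b_1,a_1,b_2)$ and $\hat K_2(b_1,a_1,b_2)$ of \eqref{vb1a1b222}. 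Because $V''$ and $V'$ are then elementary two-term exponential combinations, strict monotonicity of $V'$ reduces to sign-tracking for $\hat D_1$ and $\hat D_2$.

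First I would use the two boundary behaviors at $b_2$ to pin down these signs. The hypothesis $V_{b_1,a_1,b_2}''(b_2) = 0$ gives the linear relation
\[
\hat D_2\, e^{\rho_2^+(b_2-a)} \;=\; -\,\frac{(\rho_1^+)^2}{(\rho_2^+)^2}\,\hat D_1\, e^{\rho_1^+(b_2-a)},
\]
and substituting this into the a priori identity $V_{b_1,a_1,b_2}'(b_2) = 1$ from Proposition \ref{va1vb21kl} collapses $V'(b_2)$ to
\[
\hat D_1\, e^{\rho_1^+(b_2-a)}\,\cdot\,\frac{\rho_1^+(\rho_2^+ - \rho_1^+)}{\rho_2^+} \;=\; 1.
\]
Since $\rho_1^+ < 0 < \rho_2^+$, the prefactor on the left is strictly negative, forcing $\hat D_1 < 0$; the preceding relation then yields $\hat D_2 > 0$.

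With these signs in hand, write $V_{b_1,a_1,b_2}''(x) = \alpha_1 e^{\rho_1^+(x-a)} + \alpha_2 e^{\rho_2^+(x-a)}$ with $\alpha_1 := \hat D_1(\rho_1^+)^2 < 0$ and $\alpha_2 := \hat D_2(\rho_2^+)^2 > 0$. Differentiating once more gives $V_{b_1,a_1,b_2}'''(x) = \alpha_1 \rho_1^+ e^{\rho_1^+(x-a)} + \alpha_2 \rho_2^+ e^{\rho_2^+(x-a)}$, in which both coefficients $\alpha_1\rho_1^+$ and $\alpha_2\rho_2^+$ are strictly positive (negative $\times$ negative, positive $\times$ positive). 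Hence $V_{b_1,a_1,b_2}''$ is strictly increasing on $(a, b_2)$, and combined with $V_{b_1,a_1,b_2}''(b_2) = 0$ this yields $V_{b_1,a_1,b_2}''(x) < 0$ for every $x \in (a, b_2)$, from which strict decrease of $V_{b_1,a_1,b_2}'$ on $[a_1, b_2) \cap (a, b_2)$ is immediate. The case $a_1 > a$ poses no extra issue since $V_{b_1,a_1,b_2}$ is smooth on $[a_1, b_2)$ and the one-sided second derivative at $x = a_1$ is negative by continuity.

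The main obstacle is precisely the sign bookkeeping in the middle step: one needs \emph{both} the hypothesis $V''(b_2) = 0$ and the identity $V'(b_2) = 1$ from Proposition \ref{va1vb21kl} to fix the signs of $\hat D_1, \hat D_2$. Once these are known, strict monotonicity drops out of the elementary convexity properties of a two-term exponential combination, bypassing any delicate manipulation of $\hat K_1, \hat K_2$ themselves.
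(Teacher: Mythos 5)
Your proof is correct, and it reaches the conclusion by a genuinely different route at the one step that matters. Both you and the paper reduce the claim to showing that, in the representation $V_{b_1,a_1,b_2}''(x)=\alpha_1 e^{\rho_1^+(x-a)}+\alpha_2 e^{\rho_2^+(x-a)}$ on $(a\vee a_1,b_2)$, one has $\alpha_1<0<\alpha_2$, whence $V'''>0$, $V''<V''(b_2)=0$, and $V'$ strictly decreases. The paper pins down these signs by first proving $\hat K_2(b_1,a_1,b_2)>0$ directly from its explicit formula (splitting the cases $a_1<a$ and $a_1\geq a$ and invoking the inequality \eqref{certw} and $1-c_2(q)>0$), and only then using $V''(b_2)=0$ together with $b_2>a$ to force $\hat K(b_1,a_1,b_2)>1$ and hence $\hat K_1>0$. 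You instead treat $V'(b_2)=1$ (Proposition \ref{va1vb21kl}) and $V''(b_2)=0$ as a $2\times 2$ linear system in $u=\hat D_1 e^{\rho_1^+(b_2-a)}$ and $v=\hat D_2 e^{\rho_2^+(b_2-a)}$ and read off both signs at once from $u\cdot\rho_1^+(\rho_2^+-\rho_1^+)/\rho_2^+=1$. This is cleaner: it avoids the explicit manipulation of $c_1(q),c_2(q)$ entirely and does not rely on the positivity statement of Proposition \ref{wb2b2a1u2} except insofar as that proposition guarantees the representation of Lemma \ref{thm vbc2} is well defined. What it gives up is the unconditional information that $\hat K_2>0$ (which the paper reuses nowhere essential in this lemma), so nothing is lost. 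Your closing remark about the endpoint $a_1$ when $a_1>a$ is also fine, since $[a_1,b_2)\subset(a,b_2)$ in that case and $V''<0$ there.
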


The remark below compares the expected present value of dividends for the barrier and band strategies, its proof is deferred to Appendix \ref{orderv1v200}. Let $\mathcal M$ be as in \eqref{mathcalmin}.
\begin{rem}\label{orderv1v2}
Let $b_1=b_-$ for $0<b_-<a$, $b_1=a-$ for $a\leq b_-$ and $b_1=0$ for $b_-\leq 0$. For $0\le b_1<a<b_2$, the following results hold.
\begin{enumerate}
\item[(i)] If $\mathcal M\in \{a+, b_+\}$, then for every $x\in[0,b_1)$ one has $V_{b_1,a_1,b_2}(x)\leq \max\{V_{a+}(x), V_{b_+}(x)\}$, hence no $(b_1,a_1,b_2)$-band is optimal.
    Optimality of $(b_1, a_1, b_2)$-band can occur only if $\mathcal M\in \{0+, b_-, a-\}$.

\item[(ii)] If $\mathcal M\in\{0+, b_-, a-\}$, and for some $a_1\in[b_1, a)$ with $V_{b_1,a_1, b_2}'(a_1)=1$, then for every $x\in(a_1, a)$ one has $V_{b_1,a_1,b_2}(x)\;> V_{b_1}(x)>\max\{V_{a+}(x),V_{b_+}(x)\}$, hence no barrier is optimal.

\item[(iii)] If $\mathcal M\in\{0+, b_-, a-\}$, and for some $b_2> a_1 \geq a$ with $V_{b_1,a_1,b_2}''(b_2)=0$, then for every $x\in(a_1, b_2)$ one has $V_{b_1,a_1,b_2}(x)> V_{b_1}(x)>\max\{V_{a+}(x),V_{b_+}(x)\}$, hence no barrier is optimal.
\end{enumerate}
\end{rem}
\subsubsection{The case  $b_1\leq a_1<a$.}\label{sec521}
\begin{thm}\label{thm vbb5}
Let $b_1=b_-$ for $0<b_-<a$ and $b_1=0$ for $b_-\leq 0$.
\begin{enumerate}[label=(\Roman*)]
\item (Optimality of $(b_1, a_1, b_2)$-band)
 If there exists  $(a_1, b_2)$ for $a_1\in [b_1, a)\cap(0,a)$ and $b_2\in (a,\infty)$ such that $V_{b_1,a_1, b_2}$ satisfies equations \eqref{b11a1b2}-\eqref{asdfg}, then $V_*=V_{b_1,a_1, b_2}$ and $\pi_*=\pi_{b_1,a_1, b_2}$.
\item (Optimality of $V_{b_1, a_1, a+}$)
If there exists  $a_1 \in [b_1, a)\cap(0,a)$ such that $V_{b_1, a_1, a+}$ satisfies equation \eqref{b11a1b2} and
$\mu_+-q V_{b_1, a_1, a+}(a)\leq 0$,
then $V_*=V_{b_1,a_1, a+}$.
\end{enumerate}
\end{thm}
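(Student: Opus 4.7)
The plan is to apply the Verification Lemma~\ref{lem hjb} directly with $V=V_{b_1,a_1,b_2}$ for Part~(I) and $V=V_{b_1,a_1,a+}$ for Part~(II), taking $\mathcal{P}_\pi=\emptyset$ and working from the explicit piecewise representation in Lemma~\ref{thm vbc2}. I would first establish that $V$ is globally $C^1$ on $\mathbb{R}_+\setminus\{a\}$: the relation $W''(b_-)=0$ from Proposition~\ref{propb-b+K} makes $V'$ continuous at $b_1$ when $b_1=b_-\in(0,a)$; the defining equation $V'(a_1)=1$ in \eqref{b11a1b2} handles the junction at $a_1$; Proposition~\ref{va1vb21kl} gives $V'(b_2)=1$ and, combined with $V''(b_2)=0$ from \eqref{asdfg}, it matches smoothly with the linear piece for Part~(I). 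Thus $\mathcal{P}_\pi=\emptyset$ and \eqref{hjb4} is vacuous.

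For Part~(I) I would then verify the remaining HJB inequalities interval-by-interval on the five pieces $(0,b_1)$, $(b_1,a_1)$, $(a_1,a)$, $(a,b_2)$, $(b_2,\infty)$. Inequality \eqref{hjb1} holds with equality on the three diffusive pieces because each formula in Lemma~\ref{thm vbc2} is a linear combination of $g_{1,q}$ and $g_{2,q}$ and thus solves \eqref{g1g2g3}. On the two linear pieces it reduces to $\mu_\pm-qV(x)\le 0$: when $b_1=b_-$ the endpoint identity $\mu_- -qV(b_1)=0$ comes from the ODE together with $W''(b_-)=0$, while for $b_1=0$ we use $\mu_-\le 0$ from Proposition~\ref{b1min}; the identity $\mu_+ -qV(b_2)=0$ follows directly from $V'(b_2)=1$ and $V''(b_2)=0$, and monotonicity of $V$ propagates the inequality throughout. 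For \eqref{hjb2} I would invoke Lemma~\ref{lem W}(I) to obtain $W'(x)\ge W'(b_-)$ on $(0,b_-)$, Lemma~\ref{lem b1a1} to obtain $V'$ strictly increasing on $(a_1,a)$ starting from $V'(a_1)=1$, and Lemma~\ref{lem k12122} to obtain $V'$ strictly decreasing on $(a,b_2)$ down to $V'(b_2)=1$; $V'\equiv 1$ on the two linear pieces. Finally, \eqref{hjb3} holds with equality because $V$ on $(a_1,b_2)\setminus\{a\}$ is a combination of $g_{1,q},g_{2,q}$, each satisfying the skew-transmission condition \eqref{g1qx}; admissibility of $\pi_{b_1,a_1,b_2}$ then yields $\pi_*=\pi_{b_1,a_1,b_2}$.

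For Part~(II) the argument is parallel, but with only four pieces, the last being the linear branch $V(x)=x-a+V(a)$ on $[a,\infty)$. Inequality \eqref{hjb1} there is exactly the supplementary hypothesis $\mu_+ -qV(a)\le 0$ combined with monotonicity, and \eqref{hjb2} is immediate from $V'\equiv 1$. The hard part will be \eqref{hjb3}: the linear branch forces $V'(a+)=1$, while $V'(a-)$ is determined by the $(a_1,a)$ combination of $g_{1,q},g_{2,q}$. My proposed treatment is to exploit the generic-$b_2$ band formula, for which \eqref{hjb3} holds with equality by \eqref{g1qx} and Proposition~\ref{va1vb21kl} gives $V'(b_2)=1$; letting $b_2\downarrow a$ and using continuity of the band coefficients yields $V'(a-)=(1+\beta)/(1-\beta)$ in the limit, so \eqref{hjb3} again holds with equality. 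The main obstacle, then, is this bookkeeping at the interface $a$---in Part~(I) it is automatic from the $g_{1,q},g_{2,q}$ construction and Lemmas~\ref{lem b1a1}--\ref{lem k12122}, but in Part~(II) it requires the limiting link between $V_{b_1,a_1,a+}$ and the band function $V_{b_1,a_1,b_2}$ as $b_2\downarrow a$.
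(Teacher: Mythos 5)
Your proposal is correct and follows essentially the same route as the paper: verify the HJB inequalities piece by piece using the explicit band formula, with equality of the generator equation on the $g_{1,q},g_{2,q}$ pieces, the endpoint identities $\mu_\pm-qV=0$ at $b_-$ and $b_2$, Lemmas~\ref{lem W}, \ref{lem b1a1} and \ref{lem k12122} for \eqref{hjb2}, the transmission condition \eqref{g1qx} for \eqref{hjb3}, and (for Part~(II)) the vanishing of $w_{x,y}(x,y)$ at $x=y=a+$ to get $V'(a-)=(1+\beta)/(1-\beta)$, which is exactly the paper's computation phrased as a $b_2\downarrow a$ limit. One bookkeeping slip: you cannot take $\mathcal{P}_\pi=\emptyset$, since $V''$ jumps at $a_1$ (it is $0$ from the left and $2(-\mu_-+qV(a_1))\geq 0$ from the right by \eqref{v2b12a1er}), so $V\notin C^2$ at $a_1$ and the Verification Lemma's regularity hypothesis forces $a_1\in\mathcal{P}_\pi$; the paper accordingly sets $\mathcal{P}_{\pi}=\{a_1\}$ and checks \eqref{hjb4} there, which holds with equality because $V'(a_1-)=V'(a_1+)=1$ — precisely the continuity you already established, so the fix is immediate.
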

\begin{proof}
By Lemma \ref{thm vbc2} and the definitions of $b_-$, $a_1$ and $b_2$, we get $V_{b_-,a_1, b_2}\in C(\mathbb{R_+})\cap C^2(\mathbb{R}_+\backslash\{a_1, a\})$ with $\mathcal{P}_{\pi_{b_-,a_1, b_2}}=\{a_1\}$. For $x\in[0, a_1)$, it follows from \eqref{val 1} and Lemma \ref{thm vbc2} that $V_{b_1}=V_{b_1,a_1, b_2}$, and the HJB inequalities \eqref{hjb1}-\eqref{hjb4} are verified for $b_1=0$ and $b_1=b_-$ in Theorems \ref{thm v0+} and \ref{thm vb-}, respectively. Notably, $V_{b_-, a_1, b_2}(b_-)=V_{b_-}(b_-)=\mu_-/{q}$. Here, we focus on the case $x\in[a_1, \infty)$, and consider $b_1 = b_-$ as an example, since the proof for $b_1 = 0$ follows similarly.

First, we prove case (I) beginning with  \eqref{hjb1}.
For $x\in[a_1, a)\cup (a, b_2)$, by \eqref{wxy}
\begin{align*}
&w(x, a_1)+w_{b_2}(b_2, x)V_{b_-,a_1, b_2}(a_1)\\
&=\Big(g_{1,q}(a_1)-g_{1,q}'(b_2)V_{b_-,a_1, b_2}(a_1)\Big)g_{2,q}(x)
-\Big(g_{2,q}(a_1)-g_{2,q}'(b_2)V_{b_-,a_1, b_2}(a_1)\Big)g_{1,q}(x),
\end{align*}
where $g_{i,q}(x), i=1,2$ are given by \eqref{g1} and \eqref{g2}, respectively. Then by \eqref{g1g2g3} we have
\begin{eqnarray*}
&&\frac{1}{2}V_{b_-,a_1, b_2}''(x)+(\mu_+\mathbf{1}_{\{x>a\}}
+\mu_-\mathbf{1}_{\{x< a\}})V_{b_-,a_1, b_2}'(x)-qV_{b_-,a_1, b_2}(x)=0.
\end{eqnarray*}
By the definition of $b_2$ we get $V_{b_-, a_1, b_2}''(b_2)=0$, and by Proposition \ref{va1vb21kl} we have $V_{b_-, a_1, b_2}'(b_2)=1$. Then $V_{b_-, a_1, b_2}(b_2)={\mu_+}/{q}$.
For $x\in[b_2, \infty)$, since $V_{b_-, a_1, b_2}'(x)=1$, $V_{b_-, a_1, b_2}''(x)=0$ and $V_{b_-, a_1, b_2}(x)\geq V_{b_-, a_1, b_2}(b_2)={\mu_+}/{q}$, we have
\begin{eqnarray*}
&&\frac{1}{2}V_{b_-, a_1, b_2}''(x)+\mu_+V_{b_-, a_1, b_2}'(x)-q V_{b_-, a_1, b_2}(x)
\leq \mu_+-q V_{b_-, a_1, b_2}(b_2)=0.
\end{eqnarray*}
Thus, \eqref{hjb1} holds.

We then prove \eqref{hjb2}. By the definition of $a_1$ we get $V_{b_-,a_1, b_2}'(a_1)=1$. For $x\in(a_1, a)$, by Lemma \ref{lem b1a1} we obtain that, $V_{b_-,a_1, b_2}'(x)$ is strictly increasing in $x$, and then $V_{b_-,a_1, b_2}'(x)> V_{b_-,a_1, b_2}'(a_1)=1$. For $x\in(a, b_2)$, by Proposition \ref{va1vb21kl} and Lemma \ref{lem k12122} we have $V_{b_-, a_1, b_2}'(x)> V_{b_-, a_1, b_2}'(b_2)=1$. For $x\in [b_2, \infty)$, $V_{b_-,a_1, b_2}'(x)=1$. Therefore, $1-V_{b_-,a_1, b_2}'(x)\leq0$ for $x\in\mathbb{R}_+\backslash\{a\}$, i.e. \eqref{hjb2} holds. We then verify \eqref{hjb3}. Since
\begin{align*}
V_{b_-, a_1, b_2}'(a\pm)={\big(w_{b_2}(b_2, a_1)\big)^{-1}} & \Big(\big(g_{1,q}(a_1)-g_{1,q}'(b_2)V_{b_-, a_1, b_2}(a_1)\big)g_{2,q}'(a\pm)\\
& -\big(g_{2,q}(a_1)-g_{2,q}'(b_2)V_{b_-, a_1, b_2}(a_1)\big)g_{1,q}'(a\pm)\Big),
\end{align*}
by \eqref{g1qx} we have $(1+\beta) V_{b_-, a_1, b_2}'(a+)-(1-\beta) V_{b_-, a_1, b_2}'(a-)=0$,
i.e. \eqref{hjb3} holds.
Finally, recalling that $V_{b_-, a_1, b_2}'(a_1-)=V_{b_-, a_1, b_2}'(a_1+)=1$, since $\mathcal{P}_{\pi_{b_-, a_1, b_2}}=\{a_1\}$, we have \eqref{hjb4}.

Now, we prove case (II). The proofs of \eqref{hjb1} and \eqref{hjb2} for $x\in[a_1, a)$ are omitted as they are similar to case (I). We proceed with the proof for $x>a$. Combing $\mu_+-q V_{b_-, a_1, a+}(a)\leq 0$, $V_{b_-, a_1, a+}'(x)=1$, $V_{b_-, a_1, a+}''(x)=0$ and $V_{b_-, a_1, a+}(x)> V_{b_-, a_1, a+}(a)$, we have
\begin{align*}
&\frac{1}{2}V_{b_-, a_1, a+}''(x)+\mu_+V_{b_-, a_1, a+}'(x)-q V_{b_-, a_1, a+}(x)
< \mu_+-q V_{b_-, a_1, a+}(a)\leq0.
\end{align*}
Thus, \eqref{hjb1} holds. By Lemma \ref{thm vbc2} we get $V_{b_-, a_1, a+}'(x)=1$ and \eqref{hjb2} holds. To show \eqref{hjb3}, by $V_{b_-, a_1, a+}'(a+)=1$ and $w_{a+, a+}(a+, a+):=w_{x, y}(x, y)|_{x=y=a+}=0$ we have
\begin{align*}
&(1+\beta) V_{b_-, a_1, a+}'(a+)-(1-\beta) V_{b_-, a_1, a+}'(a-)
=(1+\beta)\\
&-(1-\beta)\frac{1+\beta}{1-\beta}\frac{w_{a+}(a+, a_1)+w_{a+, a+}(a+, a+)V_{b_-, a_1, a+}(a_1)}{w_{a+}(a+, a_1)}=(1+\beta)-(1+\beta)=0,
\end{align*}
i.e. \eqref{hjb3} holds. Since $\mathcal{P}_{\pi_{b_-, a_1, a+}}=\{a_1\}$ and $V_{b_-, a_1, a+}'(a_1)=1$, we have \eqref{hjb4} holds.
\end{proof}
\subsubsection{The case  $b_1< a \leq a_1$.}
Let $a\leq a_1$ in Lemma \ref{thm vbc2}, we obtain the following lemma, with a detailed proof provided in Appendix \ref{thm vbbproof}.
\begin{lem}\label{thm vbb}
	For $0\leq b_1< a\leq a_1<b_2$, we have
\begin{eqnarray*}
		&V_{b_1, a_1, b_2}(x)=\left \{
		\begin{array}{ll}
			\frac{W(x)}{W'(b_1)} & \mathrm{for}\ x\in[0, b_1), \\
x-b_1+\frac{W(b_1)}{W'(b_1)} & \mathrm{for}\ x\in[b_1, a_1), \\
\frac{W_+(x-a_1)}{W_+'(b_2-a_1)}+
\big(a_1-b_1+\frac{W(b_1)}{W'(b_1)}\big)
\frac{W_+'(b_2-x)e^{(\rho_2^++\rho_1^+)(x-a_1)}}{W_+'(b_2-a_1)}   & \mathrm{for}\ x\in[a_1, b_2), \\
x-b_2+\frac{W_+(b_2-a_1)}{W_+'(b_2-a_1)}+\big(a_1-b_1+\frac{W(b_1)}{W'(b_1)}\big) \frac{W_+'(0)e^{(\rho_2^++\rho_1^+)(b_2-a_1)}}{W_+'(b_2-a_1)} & \mathrm{for}\ x\in[b_2, \infty),
		\end{array} \right.
	\end{eqnarray*}
where $W(x)$ is given by \eqref{sca 1} and
\begin{align}\label{w+xeft}
&W_+(x):=e^{\rho_2^+x}-e^{\rho_1^+x}.
\end{align}
\end{lem}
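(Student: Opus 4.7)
The statement is a specialization of Lemma \ref{thm vbc2} to the range $a_1\ge a$, so the plan is to substitute $a_1\ge a$ into the general band-strategy formula given there and simplify the factors $w(x,a_1)$ and $w_{b_2}(b_2,\cdot)$ using the simpler exponential form that $g_{1,q}$ and $g_{2,q}$ take above the interface $a$.

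I would first observe that on the two lower regions $[0,b_1)$ and $[b_1,a_1)$ the formula is unchanged from Lemma \ref{thm vbc2}, so no new work is required there: on $[0,b_1)$ the controlled process evolves freely until it hits $0$ or $b_1$, so the two-sided exit identity of Theorem \ref{lap 1} combined with $V_{b_1,a_1,b_2}(b_1)=V_{b_1}(b_1)=W(b_1)/W'(b_1)$, which follows from Lemma \ref{value function} since from $b_1<a_1$ the band strategy coincides with the $b_1$-barrier strategy until ruin, gives the first piece. The affine piece on $[b_1,a_1)$ is the immediate lump-sum payment $x-b_1$ prescribed by the band strategy followed by restart at $b_1$.

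The main step is the middle region $[a_1,b_2)$. Since $a_1\ge a$, both arguments of $w(x,y)$ then lie above $a$, and by \eqref{g1}-\eqref{g2} one has
\[
g_{1,q}(x)=e^{\rho_1^+(x-a)},\qquad g_{2,q}(x)=(1-c_2(q))e^{\rho_2^+(x-a)}+c_2(q)e^{\rho_1^+(x-a)}.
\]
Substituting these into $w(x,y)=g_{2,q}(x)g_{1,q}(y)-g_{1,q}(x)g_{2,q}(y)$, the $c_2(q)$-cross-terms cancel, leaving
\[
w(x,y)=(1-c_2(q))\bigl(e^{\rho_2^+(x-a)+\rho_1^+(y-a)}-e^{\rho_1^+(x-a)+\rho_2^+(y-a)}\bigr),
\]
and factoring $e^{(\rho_1^++\rho_2^+)(y-a)}$ out of the bracket yields the compact identity
\[
w(x,y)=(1-c_2(q))\,e^{(\rho_1^++\rho_2^+)(y-a)}\,W_+(x-y)\qquad (x,y\ge a),
\]
with $W_+$ as in \eqref{w+xeft}. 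Differentiating in the first argument gives the analogous expression for $w_{b_2}(b_2,y)$ with $W_+$ replaced by $W_+'$. Substituting these two identities into the middle-region formula of Lemma \ref{thm vbc2} and cancelling the common factor $(1-c_2(q))e^{(\rho_1^++\rho_2^+)(a_1-a)}$ in numerator and denominator produces exactly the claimed expression on $[a_1,b_2)$.

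For $x\in[b_2,\infty)$ the band strategy immediately pays out the excess $x-b_2$, so $V_{b_1,a_1,b_2}(x)=x-b_2+V_{b_1,a_1,b_2}(b_2-)$, and evaluating the just-derived middle-region formula at $x=b_2$, using $W_+(0)=0$, delivers the last piece. The only nontrivial step is the algebraic factorization of $w(x,y)$ for $x,y\ge a$; once this identity is in hand, the rest is routine substitution and cancellation.
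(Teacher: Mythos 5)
Your proposal is correct and follows essentially the same route as the paper: both specialize Lemma \ref{thm vbc2} to $a_1\ge a$ and exploit the factorization $w(x,y)=(1-c_2(q))e^{(\rho_1^++\rho_2^+)(y-a)}W_+(x-y)$ for $x,y\ge a$ (the paper records the resulting expressions for $w(x,a_1)$, $w_{b_2}(b_2,x)$ and $w_{b_2}(b_2,a_1)$ directly, the latter via \eqref{wb2b1fu3}) before cancelling the common prefactor in the ratios. The only cosmetic slip is your appeal to $W_+(0)=0$ on $[b_2,\infty)$; what is actually used there is evaluation of the middle-region formula at $x=b_2$, where $W_+'(0)$ appears, but this does not affect the argument.
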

\begin{rem}\label{rem52}
Under the condition $V_{b_1, a_1, b_2}''(b_2)=0$ for $a<a_1<b_2$, it is necessary that $V_{b_1, a_1, b_2}'(a_1)=1$ for both \eqref{hjb2} and \eqref{hjb4} to hold, which fails to agree with  Lemma \ref{lem k12122}.
Therefore, the HJB inequalities \eqref{hjb1}-\eqref{hjb4} cannot be all satisfied for $a<a_1<b_2$,  and we only consider the case of $a=a_1<b_2$.
\end{rem}

To determine the values of $\beta\in(-1,1)$ for which the left and right derivatives of $V_{b_1,a, b_2}(x)$ at $x=a$ satisfy the HJB inequality \eqref{hjb3}, we provide the lemma below. Its proof is deferred to Appendix \ref{s11}.
\begin{lem}\label{lem s11}
If $V_{b_1,a, b_2}''(b_2)=0$ for $0\leq b_1<a<b_2$, then
\begin{align*}
& S(\beta):
=(1+\beta) V_{b_1, a, b_2}'(a+)-(1-\beta), \, \beta\in(-1,1),
\end{align*}
is strictly increasing.
Further, there exists a unique $\beta^*\in(-1,0)$ such that $S(\beta^*)=0$, and $S(\beta)\leq 0$ if and only if $\beta\in(-1, \beta^*]$ where
\begin{align}\label{al2}
\beta^*:&
=\frac{1-V_{b_1, a, b_{2}}'(a+)}{1+V_{b_1, a, b_{2}}'(a+)}.
\end{align}
\end{lem}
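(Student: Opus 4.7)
The plan is to reduce the claim to a one-parameter affine analysis by treating $V_{b_1, a, b_2}'(a+)$ as a fixed positive constant determined by the data $(b_1, a, b_2)$ and the hypothesis $V_{b_1,a,b_2}''(b_2) = 0$. The strict monotonicity of $S$, the existence, uniqueness and sign of $\beta^*$, and the characterization of $\{S \leq 0\}$ all follow from this single observation together with the inequality $V_{b_1, a, b_2}'(a+) > 1$.

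First I would extract the inequality $V_{b_1, a, b_2}'(a+) > 1$ from results already established in Section \ref{band_strategy}. Specializing Lemma \ref{lem k12122} to $a_1 = a$ (so that the interval $[a_1, b_2) \cap (a, b_2)$ becomes $(a, b_2)$), the hypothesis $V_{b_1, a, b_2}''(b_2) = 0$ yields that $V_{b_1, a, b_2}'$ is strictly decreasing on $(a, b_2)$. Combined with $V_{b_1, a, b_2}'(b_2) = 1$ from Proposition \ref{va1vb21kl}, passing to the left endpoint gives $V_{b_1, a, b_2}'(a+) > 1$. This is the only nontrivial input to the proof.

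Next, setting $C := V_{b_1, a, b_2}'(a+)$, I would rewrite
\begin{equation*}
S(\beta) \;=\; (1+\beta)C - (1-\beta) \;=\; (C+1)\beta + (C-1),
\end{equation*}
which is affine in $\beta$ with slope $C+1 > 2 > 0$, hence strictly increasing on $(-1,1)$. Solving $S(\beta^*) = 0$ explicitly gives
\begin{equation*}
\beta^* \;=\; \frac{1-C}{1+C} \;=\; \frac{1 - V_{b_1, a, b_2}'(a+)}{1 + V_{b_1, a, b_2}'(a+)},
\end{equation*}
which matches \eqref{al2}. Since $C > 1$, the numerator is strictly negative and the denominator strictly positive, so $\beta^* < 0$; the elementary bound $|1-C| < 1+C$ (valid for any $C > 0$) shows $\beta^* > -1$. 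Therefore $\beta^* \in (-1, 0)$. Uniqueness of the root and the equivalence $S(\beta) \leq 0 \iff \beta \in (-1, \beta^*]$ are immediate from the strict monotonicity already shown.

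The only potential obstacle is verifying that $V_{b_1, a, b_2}'(a+) > 1$ is a strict inequality rather than a weak one, i.e. that the strict decrease of $V_{b_1, a, b_2}'$ on the open interval $(a, b_2)$ indeed passes to the left endpoint. This should follow directly from the explicit formula for $V_{b_1, a, b_2}'$ on $(a, b_2)$ provided by Lemma \ref{thm vbb} (which extends continuously to $a+$), so that the monotonicity inherited from Lemma \ref{lem k12122} yields $V_{b_1, a, b_2}'(a+) > V_{b_1, a, b_2}'(b_2) = 1$ without ambiguity. Everything else is routine algebra.
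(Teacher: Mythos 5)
Your proof is correct and follows essentially the same route as the paper: both derive the key input $V_{b_1,a,b_2}'(a+)>1$ from Lemma \ref{lem k12122} (with $a_1=a$) together with $V_{b_1,a,b_2}'(b_2)=1$ from Proposition \ref{va1vb21kl}, and then exploit that $S$ is affine in $\beta$ with slope $V_{b_1,a,b_2}'(a+)+1>2$. The only cosmetic difference is that you locate $\beta^*$ by solving the affine equation and bounding the quotient directly, whereas the paper checks the signs $\lim_{\beta\downarrow-1}S(\beta)=-2<0$ and $S(0)>0$.
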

\begin{thm}\label{thm vabb51}
(Optimality of $(b_1, a, b_2)$-band)
Let $b_1=b_-$ for $0<b_-<a$, $b_1=a-$ for $a\leq b_-$ and $b_1=0$ for $b_-\leq 0$. If there exists  $b_2\in(a,\infty)$ such that $V_{b_1,a, b_2}$ satisfies equation \eqref{asdfg} and $\beta\in(-1, \beta^*]$, then $V_*=V_{b_1, a, b_2}$.
\end{thm}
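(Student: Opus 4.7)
The plan is to apply the Verification Lemma \ref{lem hjb}(ii) to $V_{b_1, a, b_2}$ by checking the HJB inequalities \eqref{hjb1}--\eqref{hjb4}, following the template of Theorem \ref{thm vbb5} but specialized to $a_1 = a$, so that the middle band threshold coincides with the skew interface. Using the piecewise representation in Lemma \ref{thm vbb}, one first verifies that $V_{b_1,a,b_2}$ is continuous on $\mathbb{R}_+$ and $C^2$ on $\mathbb{R}_+\setminus\{a\}$, so $\mathcal{P}_{\pi_{b_1,a,b_2}}=\emptyset$ and \eqref{hjb4} holds vacuously.

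For \eqref{hjb1} I would split $\mathbb{R}_+\setminus\{a\}$ into four intervals. On $[0, b_1)$ the function coincides with the barrier function $V_{b_1}$, so \eqref{hjb1} is already verified via Theorems \ref{thm v0+}, \ref{thm vb-} or \ref{thm va-} according to whether $b_1\in\{0, b_-, a-\}$. On $(b_1, a)$, $V'\equiv 1$ and $V''\equiv 0$, so $(\mathcal{A}-q)V(x)=\mu_- - qV(x)$, which is non-positive since $V(b_1)\geq \mu_-/q$ in each regime (using \eqref{mulg} for $b_1=b_-$, and $V(0)=0\geq \mu_-/q$ under the hypothesis $\mu_-\leq 0$ that forces $b_1=0$). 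On $(a, b_2)$ the expression in Lemma \ref{thm vbb} is a linear combination of $e^{\rho_i^+(x-a)}$, so by \eqref{dfgh212} one has $(\mathcal{A}-q)V\equiv 0$. On $(b_2,\infty)$, $V'\equiv 1$ and $V''\equiv 0$; combining the ODE on $(a,b_2)$, the standing hypothesis \eqref{asdfg}, and $V'(b_2)=1$ from Proposition \ref{va1vb21kl}, one obtains $V(b_2)=\mu_+/q$, so $(\mathcal{A}-q)V(x) \leq \mu_+ - qV(b_2) = 0$ since $V$ is non-decreasing.

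For \eqref{hjb2}, the inequality is inherited on $[0, b_1)$, trivial with $V'=1$ on $(b_1, a)$ and on $(b_2, \infty)$, and on $(a, b_2)$ it follows from Lemma \ref{lem k12122} applied with $a_1=a$: $V'$ is strictly decreasing on $(a, b_2)$ with $V'(b_2)=1$, hence $V'(x)>1$ there. For \eqref{hjb3}, since $V'(a-)=1$ from the $(b_1,a)$ formula, the inequality reduces exactly to $S(\beta)\leq 0$ with $S$ as in Lemma \ref{lem s11}; that lemma identifies this as equivalent to the standing hypothesis $\beta\leq\beta^*$.

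The hard part will be the boundary case $b_1 = a-$ (arising when $a \leq b_-$), where the barrier merges with the skew interface. In this regime $V_{a-, a, b_2}$ is only defined as the limit $\lim_{b_1\uparrow a-}V_{b_1, a, b_2}$, the corresponding strategy $\pi_*$ is only asymptotically attained by the sequence $(\pi_{a-1/n, a, b_2})_n$, and the HJB inequalities must be passed through to the limit; this parallels the asymptotic optimality discussion after Theorem \ref{thm va-} and requires verifying that the smooth-pasting at the limiting barrier, the identity $V(b_1)=\mu_-/q$, and the strict-positivity conditions used above are preserved in the limit.
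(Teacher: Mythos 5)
Your proposal is correct and follows essentially the same route as the paper: verify the HJB inequalities interval by interval via Lemma \ref{thm vbb}, reduce $[0,a)$ to the barrier theorems, use \eqref{dfgh212} and Proposition \ref{va1vb21kl} to get $(\mathcal{A}-q)V=0$ on $(a,b_2)$ and $V(b_2)=\mu_+/q$, use Lemma \ref{lem k12122} for \eqref{hjb2} and Lemma \ref{lem s11} for \eqref{hjb3}. The only small divergence is your plan to pass the HJB inequalities to the limit for $b_1=a-$; the paper instead checks them directly on the explicit closed-form expression for $V_{a-,a,b_2}$ (where the linear piece on $(b_1,a)$ is empty), which is cleaner since the approximating functions need not themselves satisfy the HJB system.
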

\begin{proof}
By Lemma \ref{thm vbb} and the definitions of $b_1$ and $b_2$, we get $V_{b_1,a, b_2}\in C(\mathbb{R_+})\cap C^2(\mathbb{R}_+\backslash\{ a\})$ with $\mathcal{P}_{\pi_{b_1,a, b_2}}=\emptyset$. For $x\in[0, a)$, it follows from \eqref{val 1} and Lemma \ref{thm vbb} that $V_{b_1}=V_{b_1,a, b_2}$, and the HJB inequalities \eqref{hjb1}-\eqref{hjb4} are verified for $b_1=\{0, b_-, a-\}$ in Theorems \ref{thm v0+}-\ref{thm va-}. We now consider $x\in(a, \infty)$.

For the proof of \eqref{hjb1}, the cases where $b_1=b_-$ and $b_1=0$ are omitted since they follow similarly to Theorem \ref{thm vbb5}, and we consider the case $b_1 = a-$. For $x\in(a, b_2)$, since
\begin{align*}
&W_+(x-a)+\frac{W(a)}{W'(a-)}W_+'(b_2-x)e^{(\rho_2^++\rho_1^+)(x-a)}\\
&=\big(1-\frac{W(a)}{W'(a-)} \rho_1^+ e^{\rho_1^+(b_2-a)}\big) e^{\rho_2^+(x-a)}-
\big(1-\frac{W(a)}{W'(a-)} \rho_2^+ e^{\rho_2^+(b_2-a)}\big) e^{\rho_1^+(x-a)},
\end{align*}
by \eqref{dfgh212} we have
\begin{eqnarray*}
&&\frac{1}{2}V_{a-,a, b_2}''(x)+\mu_+ V_{a-,a, b_2}'(x)-qV_{a-,a, b_2}(x)=0.
\end{eqnarray*}
By the definition of $b_2$ we get $V_{a-, a, b_2}''(b_2)=0$, and by Proposition \ref{va1vb21kl} we have $V_{a-, a, b_2}'(b_2)=1$. Then $V_{a-, a, b_2}(b_2)={\mu_+}/{q}$.
For $x\in[b_2, \infty)$, since $V_{a-, a, b_2}'(x)=1$, $V_{a-, a, b_2}''(x)=0$ and $V_{a-, a, b_2}(x)\geq V_{a-, a, b_2}(b_2)={\mu_+}/{q}$, we have
\begin{eqnarray*}
&&\frac{1}{2}V_{a-, a, b_2}''(x)+\mu_+V_{a-, a, b_2}'(x)-q V_{a-, a, b_2}(x)
\leq \mu_+-q V_{a-, a, b_2}(b_2)=0.
\end{eqnarray*}
Thus, \eqref{hjb1} holds.

We next prove \eqref{hjb2}.
For $x\in(a, b_2)$, by Proposition \ref{va1vb21kl} and Lemma \ref{lem k12122} we have $V_{b_1, a, b_2}'(x)> V_{b_1, a, b_2}'(b_2)=1$. For $x\in[b_2, \infty)$, $V_{b_1, a, b_2}'(x)=1$. Thus, \eqref{hjb2} holds.
In addition, since $V_{b_1,a,b_2}'(a-)=1$, under the condition $\beta\in(-1, \beta^*]$, by Lemma \ref{lem s11} we have
$$(1+\beta)V_{b_1,a,b_2}'(a+)-(1-\beta)V_{b_1,a,b_2}'(a-)
=(1+\beta)V_{b_1,a,b_2}'(a+)-(1-\beta)\leq 0,$$
i.e. \eqref{hjb3} holds. Since $\mathcal{P}_{\pi_{b_1, a, b_2}}=\emptyset$, we have \eqref{hjb4}.
\end{proof}
Combining the analysis in the barrier and band sections  \ref{barrier_strategy}-\ref{band_strategy}, we obtain the following barrier-band optimality decision rule, with proof given in Appendix \ref{prop:decision00}. Let $b_1=b_-$ for $0<b_-<a$, $b_1=a-$ for $a\leq b_-$ and $b_1=0$ for $b_-\leq 0$, and $\mathcal{M}$ be as defined in \eqref{mathcalmin}. For brevity, write HJB for inequalities \eqref{hjb1}-\eqref{hjb4}.
\begin{rem}[Barrier vs.\ band: decision rule]\label{prop:decision}
With $\mathcal{M}\in\{a+,\,b_+\}$, the optimal policy is a barrier strategy: $b_+$ if $b_+>a$, and $a+$ if $b_+\leq a$.
With $\mathcal{M}\in\{0+,\,b_-,\,a-\}$, for $0\leq b_1\leq a_1, a\le b_2$ and $a_1>0$,  the optimal policy is determined by the following case-wise rules.
\begin{enumerate}[label=\textbf{(C\arabic*)}, leftmargin=2.4em, itemsep=0pt]
\item \textbf{Case \(a_1\in[b_1,\,a)\).}
  \begin{enumerate}[label=(\roman*), leftmargin=0.5em, labelsep=0.4em, itemsep=0pt, topsep=-1pt, partopsep=0pt]
  \item If \(V'_{b_1,a_1,b_2}(a_1)=1\), then the $b_1$-\emph{barrier} fails to be optimal, and further
    \begin{itemize}[label=\SmallDot, leftmargin=*, labelsep=0.4em, itemsep=0pt, topsep=-2pt, partopsep=0pt]
      \item If \(b_2>a\) and \(V''_{b_1,a_1,b_2}(b_2)=0\), the \((b_1,a_1,b_2)\)-\emph{band} is optimal.
      \item If \(b_2=a\) and \(\mu_+-q\,V_{b_1,a_1,a+}(a)\le 0\), the \((b_1,a_1,a+)\)-\emph{band} is optimal.
    \end{itemize}
  \item If \(V'_{b_1,a_1,b_2}(a_1)\neq1\), then the \((b_1,a_1,b_2)\)-\emph{band} fails to satisfy HJB, and further
    \begin{itemize}[label=\SmallDot, leftmargin=*, labelsep=0.4em, itemsep=0pt, topsep=-2pt, partopsep=0pt]
      \item If \(\beta\in(-1,0]\) and \(\mu_+ - q V_{b_1}(a)\leq 0\), the $b_1$\emph{-barrier} is optimal.
    \end{itemize}
  \end{enumerate}
\item \textbf{Case \(a_1=a\).}
  \begin{enumerate}[label=(\roman*), leftmargin=0.5em, labelsep=0.4em, itemsep=0pt, topsep=-2pt, partopsep=0pt]
  \item If \(b_2>a\), then
    \begin{itemize}[label=\SmallDot, leftmargin=*, labelsep=0.4em, itemsep=0pt, topsep=-2pt, partopsep=0pt]
      \item If \(V''_{b_1,a,b_2}(b_2)=0\) and \(\beta\in(-1,\beta^*]\), the \((b_1,a,b_2)\)\emph{-band} is optimal.
      \item If \(V''_{b_1,a,b_2}(b_2)\neq0\), the \((b_1,a,b_2)\)\emph{-band} fails to satisfy HJB, and further
          \begin{itemize}[label=\CDot, leftmargin=*, labelsep=0.4em, itemsep=0pt, topsep=-2pt, partopsep=0pt]
      \item If \(\beta\in(-1,0]\) and \(\mu_+ - q V_{b_1}(a)\leq 0\), the $b_1$\emph{-barrier} is optimal.
    \end{itemize}
    \end{itemize}
  \item If \(b_2=a\), then
    \begin{itemize}[label=\SmallDot, leftmargin=*, labelsep=0.4em, itemsep=0pt, topsep=-2pt, partopsep=0pt]
      \item If \(\beta\in(-1,0]\) and \(\mu_+ - q V_{b_1}(a)\leq 0\), the $b_1$\emph{-barrier} is optimal.
    \end{itemize}
  \end{enumerate}
  \item \textbf{Case \(a_1>a\).}
  The \((b_1,a_1,b_2)\)\emph{-band} fails to satisfy HJB.
\end{enumerate}
\end{rem}

\section{Examples}\label{example}
Applying Theorems \ref{thm v0+}-\ref{thm vb2} and Theorems \ref{thm vbb5}-\ref{thm vabb51},
we summarize in Table \ref{Table 1} and Table \ref{Table 2} the optimal strategies for different choices of (numerical) values for $\beta, \mu_-$ and $\mu_+$, respectively.
\begin{table}[!ht]
\caption{The optimal dividend strategies for $q=0.1$ and $a=1$.}
\label{Table 1}
\centering
\tabstyle
\begin{tabular}{c| c| c c c c c}
\toprule
$\beta$ & $\mu_-\backslash \mu_+$ & $-8$ & $1$ & $5$ & $7.1$ & $11$ \\
\multirow{5}*{$-0.9$} & $-5$ & $0$ & $(0,a,3.756)$ & $(0,0.684,2.232)$ & $(0,0.614,1.968)$ & $(0,0.529,1.703)$ \\

& $0$ & $0$ & $(0,a,3.756)$ & $b_+=2.199$ & $b_+=1.935$ & $b_+=1.674$  \\

& $1$ & $a-$ & $(a-,a,3.626)$ & $b_+=2.150$ & $b_+=1.895$ & $b_+=1.643$  \\

& $5$ & $a-$ & $a-$ & $(a-,a,1.718)$ & $b_+=1.715$ & $b_+=1.522$  \\

& $7$ & $b_-=0.982$ & $b_-=0.982$ & $b_-=0.982$ & $(0.982,a,1.605)$ & $b_+=1.506$  \\
\midrule
$\beta$ & $\mu_-\backslash \mu_+$ & $-8$ & $0.3$ & $0.5$ & $5$ & $10$ \\
\multirow{5}*{$-0.3$} & $-5$ & $0$ & $0$ & $(0,0.946,3.593)$ & $(0,0.488,2.195)$ & $(0,0.378,1.723)$  \\

& $0$ & $0$ & $(0,a, 2.753)$ & $b_+=3.456$ & $b_+=2.058$ & $b_+=1.639$  \\

& $1$ & $a-$ & $a-$ & $(a-,a,2.896)$ & $b_+=1.963$ & $b_+=1.589$  \\

& $7$ & $b_-=0.982$ & $b_-=0.982$ & $b_-=0.982$ & $b_-=0.982$ & $b_+=1.405$  \\

& $9$ & $b_-=0.820$ & $b_-=0.820$ & $b_-=0.820$ & $b_-=0.820$ & $b_+=1.383$  \\
\midrule

$\beta$ & $\mu_-\backslash \mu_+$ & $-8$ & $0$ & $6$ & $15$ & $50$ \\
\multirow{5}*{$0.3$}
& $-5$ & $(0,0.939,a+)$ & $(0,0.939,a+)$ & $(0,0.401,1.996)$ & $(0,0.287,1.498)$ & $(0,0.155,1.187)$  \\

& $-1$ & $(0,0.709,a+)$ & $(0,0.709,a+)$ & $b_+=1.898$ & $b_+=1.454$ & $b_+=1.173$  \\

& $0$ & $a+$ & $a+$ & $b_+=1.835$ & $b_+=1.428$ & $b_+=1.165$  \\

& $2$ & $a+$ & $a+$ & $b+=1.633$ & $b_+=1.352$ & $b_+=1.143$  \\

& $9$ & $a+$ & $a+$ & $a+$ & $a+$ & $b_+=1.115$  \\
\midrule

$\beta$ & $\mu_-\backslash \mu_+$ & $-8$ & $0$ & $1$ & $10$ & $171.2$ \\
\multirow{5}*{$0.9$} & $-5$ & $(0,0.707,a+)$ & $(0,0.707,a+)$ & $(0,0.583,2.963)$ & $(0,0.315,1.574)$ & $(0,0.027,1.059)$  \\

& $-1$ & $a+$ & $a+$ & $b_+=1.845$ & $b_+=1.501$ & $b_+=1.071$  \\

& $0$ & $a+$ & $a+$ & $a+$ & $b_+=1.456$ & $b_+=1.069$  \\

& $2$ & $a+$ & $a+$ & $a+$ & $a+$ & $b_+=1.063$  \\

& $9$ & $a+$ & $a+$ & $a+$ & $a+$ & $b_+=1.056$  \\
\bottomrule
\multicolumn{7}{c}{Note: Optimal strategies include types $0$, $b_{\pm}$, $a{\pm}$, $(0,a_1,b_2)$, $(0, a_1,a+)$, $(0, a, b_2)$,$(b_-, a, b_2)$ and $(a-, a, b_2)$.}
\end{tabular}
\end{table}

\begin{table}[!ht]
\caption{The optimal dividend strategies for $q=0.3$ and $a=2$.}
\label{Table 2}
\centering
\tabstyle
\begin{tabular}{c| c| c c c c c}
\toprule
$\beta$ & $\mu_-\backslash \mu_+$ & $-6$ & $2.538$ & $3.235$ & $3.705$ & $5.029$ \\

\multirow{5}*{$-0.9$} & $-3$ & $0$ & $(0,1.945,3.373)$ & $(0,1.856,3.247)$ & $(0,1.807,3.170)$ & $(0,1.698,2.992)$  \\

& $1.021$ & $b_-=1.700$ & $(1.700,1.743,3.3)$ & $b_+=3.160$ & $b_+=3.080$ & $b_+=2.903$  \\

& $1.941$ & $b_-=1.578$ & $(1.578,a,2.977)$ & $b_+=3.100$ & $b_+=3.024$ & $b_+=2.859$  \\

& $2.392$ & $b_-=1.468$ & $b_-=1.468$ & $(1.468,a,2.986)$ & $b_+=3.004$ & $b_+=2.843$   \\

& $3.971$ & $b_-=1.155$ & $b_-=1.155$ & $b_-=1.155$ & $b_-=1.155$ & $(1.155,1.412,2.801)$ \\
\midrule

$\beta$ & $\mu_-\backslash \mu_+$ & $-6$ & $1.131$ & $1.294$ & $1.421$ & $5.160$ \\
\multirow{5}*{$-0.3$}

& $-3$ & $0$ & $(0,1.947,3.257)$ & $(0,1.888,3.370)$ & $(0,1.846,3.415)$ & $(0,1.396,2.931)$\\

& $0.053$ & $b_-=0.176$ & $(0.176,1.601,3.230)$ & $(0.176,1.020,3.293)$ & $(0.176,0.234,3.300)$ & $b_+=2.791$ \\

& $0.499$ & $b_-=1.316$ & $(1.316,1.332,3.101)$ & $b_+=3.135$ & $b_+=3.147$ & $b_+=2.743$  \\

& $0.814$ & $b_-=1.632$ & $b_-=1.632$ & $(1.632,1.687,3.001)$ & $b_+=3.023$ & $b_+=2.710$  \\

& $4.692$ & $ b_-=1.052$ & $ b_-=1.052$ & $b_-=1.052$ & $b_-=1.052$ & $b_+=2.500$  \\
\midrule
$\beta$ & $\mu_-\backslash \mu_+$ & $-6$ & $1$ & $3$ & $6$ & $15$ \\
\multirow{5}*{$0.02498$}
& $-3$ & $(0,1.085,a+)$ & $(0,1.904,3.061)$ & $(0,1.497,3.186)$ & $(0,1.309,2.808)$ & $(0,1.109,2.425)$ \\

& $1$ & $a+$ & $a+$ & $b_+=2.746$ & $b_+=2.574$ & $b_+=2.327$ \\

& $3$ & $(1.332,1.363,a+)$ & $(1.332,1.363,a+)$ & $(1.332,1.363,a+)$ & $b_+=2.442$ & $b_+=2.287$ \\

& $5$ & $(1.013, 1.722, a+)$ & $(1.013, 1.722, a+)$ & $(1.013, 1.722, a+)$ & $b_+=2.290$ & $b_+=2.264$ \\

& $10$ & $a+$ & $a+$ & $a+$ & $a+$ & $b_+=2.215$ \\

\midrule
$\beta$ & $\mu_-\backslash \mu_+$ & $-6$ & $0.806$ & $0.844$ & $0.93677$ & $0.997$ \\
\multirow{5}*{$0.3$}

& $-3$ & $(0,1.910,a+)$ & $(0,1.828,2.991)$ & $(0,1.457,3.133)$ & $(0,1.283,2.774)$ & $(0,1.097,2.409)$ \\

& $0.041$ & $(0.136,0.997,a+)$ & $(0.136,0.918,2.273)$ & $(0.136,0.848,2.359)$ & $(0.136,0.578,2.522)$ & $b_+=2.600$  \\

& $0.061$ & $(0.202,0.893,a+)$ & $(0.202,0.810,2.251)$ & $(0.202,0.726,2.337)$ & $(0.202,0.202,2.500)$ & $b_+=2.577$  \\

& $0.076$ & $(0.252,0.787,a+)$ & $(0.252,0.697,2.231)$ & $(0.252,0.587,2.318)$ & $b_+=2.480$ & $b_+=2.559$  \\

& $0.095$ & $(0.314,0.558,a+)$ & $b_+=2.201$ & $b_+=2.287$ & $b_+=2.453$ & $b_+=2.535$  \\
\bottomrule
\multicolumn{7}{c}{Note: Optimal strategies include types $0$, $b_-$, $a+$, $b_+$, $(b_-,a_1,b_2)$, $(b_-,a_1,a+)$, $(0,a_1,b_2)$, $(0, a_1,a+)$ and $(b_-, a, b_2)$.}
\end{tabular}
\end{table}

From Tables \ref{Table 1}-\ref{Table 2} one can observe that, for $\beta\in(-1,0)$,
if $\mu_+<0$, then the optimal dividend strategy is the $b_1$-barrier strategy, where $b_1=b_-$ for $0<b_-<a$, $b_1=a-$ for $b_-\geq a$ and $b_1=0$ for $b_-\leq 0$, as mentioned in Theorem \ref{thm v0+} and Remarks \ref{rem vb-1} and \ref{rem va-1}. This suggests that a  constrained drift associated to the dynamics above $a>0$ makes it more challenging for surplus to reach high levels. Therefore, dividend should be paid before the surplus reached level $a$, and if $b_-\leq 0$, it is also meaningful to set the barrier at $0$ even through ruin immediately occurs as a result.

Furthermore, for fixed moderate $\beta\in(-1,0)$ and moderate $\mu_-$, as $\mu_+$ gradually increases from negative to positive, the optimal strategy undergoes a transition from a $b_1$-barrier strategy to a $(b_1, a_1,b_2)$-band strategy and ultimately to a $b_+$-barrier strategy. On the other hand, if $\mu_-$ takes an extreme negative value, then the $(0,a_1,b_2)$-band strategy tends to be optimal for large positive $\mu_+$ values, which agrees with the intuition that when the surplus is close to $0$, dividend should be paid as soon as possible in the presence of negative trend.

In addition, for $\beta\in(0, 1)$, if either $b_-\geq a$ or $0<b_-<a$ and $W'(b_-)> W'(a+)$, then the optimal strategy transitions from a $a+$-barrier strategy to a $b_+$-barrier strategy as $\mu_+$ increases; otherwise, it shifts from a $(b_1, a_1,a+)$-band strategy to a $(b_1, a_1,b_2)$-band strategy and then to a $b_+$-barrier strategy as $\mu_+$ increases.

Fixing $\beta\in(-1,0)$ and $\mu_-$, once the optimal strategy becomes a $b_+$-barrier, for sufficiently large values of $\mu_+>0$, the optimal barrier level $b_+$ exhibits a decreasing trend as $\mu_+$ increases. This is due to the fact that
to maximize the expected total amount of discounted dividends, the large value of $\mu+$ reduces the ruin probability and allows to set the barrier lower so that the dividend is paid earlier to reduce the effect of discounting.

To conclude, the tables suggest that either a barrier or a band strategy is optimal depending on the joint effect of $\beta$, $\mu_{\pm}$ and other model parameters. A band strategy may also be optimal when the drift switches its sign from negative to positive, i.e. $\mu_-< 0<\mu_+$ or when the skewness below $a$ intensifies, i.e. $\beta\downarrow -1$; in the latter case, a band strategy can remain optimal even for relatively small (or even zero) drift gap $|\mu_+ - \mu_-|$. For example, Table \ref{Table 1} reports that for $\beta= -0.9$ and $\mu_- = \mu_+ = 5$, the $(a-, a, 1.718)$-band is optimal.

\bmhead{Acknowledgements}
The first author thanks Concordia University where this work was completed during her visit. All authors thank the editor and the reviewers for constructive comments that significantly improved the paper.
This research is supported by Natural Sciences and Engineering Research Council of Canada (RGPIN-2021-04100), China Scholarship Council (No.202206840089), National Natural Science Foundation of China (Grant No.11671204) and Postgraduate Research \& Practice Innovation Program of Jiangsu Province (Project No.KYCX22\_0392).

\section*{Declarations}
Conflict of Interest: The authors declare that the presented results are new, and there is no any conflict of interest.

\begin{appendices}
\appendix

\section{Proofs}\label{Section 5}
In the following we provide proofs of lemmas and propositions for completeness.
Recall that $\rho_1^-<0<\rho_2^-, \rho_1^+<0<\rho_2^+$ and $\beta\in(-1,1)$.

\subsection{Proof of Lemma \ref{c1c201}}
\begin{proof}\label{c11c223}
From the definition of $c_i(q)$ for $i=1,2$ in \eqref{c1c266}, it follows that
\begin{align}
&1-c_1(q)=\frac{(1-\beta)\rho_2^--(1+\beta)\rho_1^+}{(1-\beta)(\rho_2^--\rho_1^-)},\label{1c1q1}\\
&1-c_2(q)=\frac{(1-\beta)\rho_2^--(1+\beta)\rho_1^+}{(1+\beta)(\rho_2^+-\rho_1^+)}.\nonumber
\end{align}
Since both the numerator and denominator of $1-c_i(q)$ for $i=1,2$ are positive, we have
\begin{align}\label{1c1q11}
&1-c_i(q)>0.
\end{align}
Further, since $(1\pm \beta)(\rho_{2}^{\pm}-\rho_{1}^{\pm})>0$ holds for the denominators of $c_i(q)$ for $i=1,2$, by \eqref{c1c266} it is straightforward to obtain the necessary and sufficient conditions for $c_i(q)<0$, as outlined in Lemma \ref{c1c201}.
\end{proof}
\subsection{Proof of Lemma \ref{value function}}
\begin{proof}\label{prof value}
For $0\leq a_0<b$ and $b\in\mathbb{R}_+\backslash\{a\}$, we consider three different scenarios: $a_0<a< b$, $a\leq a_0<b$ and $a_0<b< a$. For the case $a_0<a< b$, applying the strong Markov property, we have
\begin{align}\label{val 7}
V_{b}(b, a_0)
&= \mathbb{E}_{b}\big[\int_0^{\hat{\tau}_a} e^{-q t}\mathrm{d}D_t^{\pi_b}\big]
+\mathbb{E}_b[e^{-q \hat{\tau}_a}] \mathbb{E}_a[e^{-q \tau_b}; \tau_b<\tau_{a_0}] V_{b}(b, a_0).
	\end{align}
Considering that the process $(U_t^{\pi_b})_{0\leq t\leq \hat{\tau}_a}$ is simply a reflected Brownian motion with drift $\mu_+$, i.e. $U_t^{\pi_b}=x+B_t+\mu_+t-D_t^{\pi_b}$, by Proposition 1 in \cite{Renaud2007}, we have
\begin{align}\label{val 6}
\mathbb{E}_{b}\big[\int_0^{\hat{\tau}_a} e^{-q t}\mathrm{d}D_t^{\pi_b}\big]
=\frac{e^{\rho_2^+(b-a)}-e^{\rho_1^+(b-a)}}
{\rho_2^+e^{\rho_2^+(b-a)}-\rho_1^+e^{\rho_1^+(b-a)}}
=\frac{w(b,a)}{w_b(b,a)},
\end{align}
where $w_{x}(x,y)$ is given by \eqref{wxpart}.

We now determine $\mathbb{E}_b[e^{-q \hat{\tau}_a}]$ with $a<b$. Fix $x\in(a, b]$ and consider $X$ on $[0, \hat{\tau}_a]$. Define the running maximum process by $\overline{X}_t:=\max\limits_{0\leq s\leq t}X_s,\, t\geq 0$. Then the aggregate dividends paid up to time $t$ is
$D_t^{\pi_b}:=(\overline{X}_t-b)\vee 0$,
with $D_{0}^{\pi_b}=0$.
Let $S_t:=0\vee \overline{X}_t$, define $Y_t:=S_t-X_t$ as the Brownian motion $X_t$ with drift $\mu_+$ reflected at its past running maximum $S_t$. Denote by $\tilde{\tau}_y$ the time at which process $(Y_t)_{t\geq 0}$ first hit the boundary $y$, i.e. $\tilde{\tau}_{y}:=\inf\{t\geq 0, Y_t\geq y\}$. By construction, $U_t^{\pi_b} = X_t - D_t^{\pi_b}$ is simply a reflected Brownian motion with drift $\mu_+$, i.e. $U_t^{\pi_b}=x+B_t+\mu_+t-D_t^{\pi_b}$.
Then applying the spatial homogeneity of $X$, it is easy to conclude that $\{U^{\pi_b}, D^{\pi_b}, \hat{\tau}_a, U_0^{\pi_b}=x\}$ has the same law as $\{b-Y, S, \tilde{\tau}_{b-a}, Y_0=b-x\}$. For any $q>0$, by Theorem 4.1 and Remark 4.3 of \cite{Zhou2007} we have
\begin{align*}
\mathbb{E}_{U_0^{\pi_b}=b}[e^{-q \hat{\tau}_a}]=\mathbb{E}_{Y_0=0}[e^{-q \tilde{\tau}_{b-a}}]=\frac{K^{(q)}(b-a)}
{\widehat{W}^{(q)}{'}(b-a)},
\end{align*}
where $\widehat W^{(q)}$ denotes the $q$-scale function of a Brownian motion with
drift $\mu_+$, i.e. $\widehat W^{(q)}(x) = \frac{2}{\rho_2^+ - \rho_1^+}(e^{\rho_2^+ x} - e^{\rho_1^+ x})$, and
\begin{align*}
&K^{(q)}(x):= q \big(\int_0^x \widehat{W}^{(q)}(y) \mathrm{d} y\big) \, \widehat{W}^{(q)}{'}(x)
-q \,\big(\widehat{W}^{(q)}(x)\big)^2 + \widehat{W}^{(q)}{'}(x).
\end{align*}
Since $\rho_1^+ \rho_2^+ = -2q$, further simplification yields
\begin{align}\label{val 5}
\mathbb{E}_{b}[e^{-q \hat{\tau}_a}]
&=\frac{1}{\rho_2^+ e^{\rho_2^+ (b-a)} - \rho_1^+ e^{\rho_1^+ (b-a)}} \nonumber \\
& \cdot \Big( \frac{2q}{\rho_2^+ - \rho_1^+}
\big( (\frac{e^{\rho_2^+ (b-a)} -1}{\rho_2^+} - \frac{e^{\rho_1^+ (b-a)} -1}{\rho_1^+})
(\rho_2^+ e^{\rho_2^+ (b-a)} - \rho_1^+ e^{\rho_1^+ (b - a) }) \nonumber \\
& \ \ \ \ \ \ \ \ \ \ \ \ \ \ \ \ \ \ \ \ \  - (e^{\rho_2^+ (b-a)} - e^{\rho_1^+(b-a)})^2 \big) + \rho_2^+ e^{\rho_2^+ (b-a)} - \rho_1^+ e^{\rho_1^+ (b-a)} \Big) \nonumber \\
& =\frac{(\rho_2^+-\rho_1^+) e^{(\rho_1^+ + \rho_2^+)(b-a)}}
{\rho_2^+e^{\rho_2^+(b-a)}-\rho_1^+e^{\rho_1^+(b-a)}}
=\frac{\rho_2^+-\rho_1^+}
{\rho_2^+e^{\rho_1^+(a-b)}-\rho_1^+e^{\rho_2^+(a-b)}}
=\frac{w_b(b,b)}
{w_b(b,a)},
\end{align}
where $w_{x}(x, x)=w_{y}(y, x)|_{y=x}$.

Thus, substituting \eqref{val 6} and \eqref{val 5} into \eqref{val 7}, by \eqref{e1} we obtain
\begin{align*}
&V_{b}(b, a_0)
=\frac{w(b,a)w(b,a_0)}{w_b(b,a)w(b, a_0)-w_b(b,b)w(a, a_0)}.
\end{align*}
By \eqref{wxy},
\begin{align}\label{w1w2w3rela}
&w_b(b,a)w(b, a_0)-w_b(b,b)w(a, a_0) \nonumber\\
&=\big(g_{2,q}'(b)g_{1,q}(a)-g_{1,q}'(b)g_{2,q}(a)\big)
\big(g_{2,q}(b)g_{1,q}(a_0)-g_{1,q}(b)g_{2,q}(a_0)\big) \nonumber \\
&\ \ -\big(g_{2,q}'(b)g_{1,q}(b)-g_{1,q}'(b)g_{2,q}(b)\big)
\big(g_{2,q}(a)g_{1,q}(a_0)-g_{1,q}(a)g_{2,q}(a_0)\big) \nonumber \\
&=g_{2,q}(b)g_{1,q}(a)\big(g_{2,q}'(b) g_{1,q}(a_0) - g_{1,q}'(b) g_{2,q}(a_0)\big) \nonumber \\
&\ \ - g_{1,q}(b) g_{2,q}(a) \big(g_{2,q}'(b)g_{1,q}(a_0) - g_{1,q}'(b) g_{2,q}(a_0)\big) \nonumber \\
&= \big(g_{2,q}(b)g_{1,q}(a) - g_{1,q}(b) g_{2,q}(a)\big)
\big(g_{2,q}'(b)g_{1,q}(a_0) - g_{1,q}'(b) g_{2,q}(a_0)\big) \nonumber \\
&= w(b,a) w_b(b, a_0),
\end{align}
which simplifies $V_{b}(b, a_0)$ to
\begin{align*}
&V_{b}(b, a_0)
=\frac{w(b,a)w(b,a_0)}{w(b, a)w_b(b, a_0)}
=\frac{w(b,a_0)}
{w_b(b,a_0)}.
\end{align*}
Note that for  $a\leq a_0<b$ and $a_0<b< a$, $U^{\pi_b}$ also reduces to a reflected Brownian motion with drift.  By Proposition 1 in \cite{Renaud2007} again, we have $V_{b}(b, a_0) = \frac{w(b,a_0)}{w_b(b,a_0)}$. Further, when $a_0=0$, by \eqref{vpib1} and \eqref{sca 1} we have
\begin{align}\label{vpib2}
&V_{b}(b)=\frac{W(b)}{W'(b)}.
\end{align}

For $0\leq x\leq b$, applying the strong Markov property together with the fact that no dividends are paid out until the surplus process $X$ exceeds the level $b$, by \eqref{e1}, \eqref{sca 1} and \eqref{vpib2} we have
\begin{eqnarray*}\label{val 3}
		&&V_b(x)=\mathbb{E}_x[e^{-q \tau_{b}}; \tau_{b}<\tau_0]V_b(b)
=\frac{W(x)}{W(b)} V_b(b)=\frac{W(x)}{W'(b)}.
	\end{eqnarray*}
For $x> b> a>0$, since $D^{\pi_b}$ has a jump at $t=0$ of size $x-b$ to bring $U^{\pi_b}$ back to the level $b$, by \eqref{vpib2} we have
	\begin{eqnarray*}\label{val 4}
		V_b(x)=x-b+V_b(b)=x-b+\frac{W(b)}{W'(b)}.
	\end{eqnarray*}
This completes the proof.
\end{proof}
\subsection{Proof of Proposition \ref{prop ww}}
\begin{proof}\label{prop ww11}
The first-order derivative of $W(x)$ defined by \eqref{sca 1} is given by
\[ W'(x) = g_{2,q}'(x)g_{1,q}(0)-g_{1,q}'(x)g_{2,q}(0). \]
From the forms of $g_{1,q}(x)$ and $g_{2,q}(x)$ in \eqref{g1} and \eqref{g2}, it follows that $g_{1,q}(0) = c_1(q)e^{-\rho_2^- a} + \big(1-c_1(q)\big) e^{-\rho_1^- a}$, $g_{2,q}(0) = e^{-\rho_2^- a}$, and
\begin{align*}
&g_{1,q}'(x)= \rho_{1}^+ e^{\rho_{1}^+(x-a)} {\mathbf{1}_{\{x> a\}}}+{\Big(c_1(q)\rho_{2}^-e^{\rho_{2}^-{(x-a)}}+\big(1-c_1(q)\big) \rho_{1}^- e^{\rho_{1}^-{(x-a)}} \Big)}{\mathbf{1}_{\{x\leq a\}}},\\
&g_{2,q}'(x)=\Big(\big(1-c_2(q)\big)\rho_{2}^+e^{\rho_{2}^+(x-a)}+c_2(q)\rho_{1}^+ e^{\rho_{1}^+(x-a)} \Big){1_{\{x> a\}}}{+\rho_{2}^- e^{\rho_{2}^-(x-a)}}{1_{\{x\leq a\}}}.
\end{align*}
Substituting $g_{i,q}'(x)$ for $i=1,2$ into $W'(x)$, and incorporating the given $g_{i,q}(0)$, we obtain, for $0\leq x< a$,
\begin{align*}
W'(x)& = \rho_{2}^- e^{\rho_{2}^-(x-a)} \Big(c_1(q)e^{-\rho_2^- a} + \big(1-c_1(q)\big) e^{-\rho_1^- a}\Big) \\
&\ \  - \Big(c_1(q)\rho_{2}^-e^{\rho_{2}^-{(x-a)}}+\big(1-c_1(q)\big) \rho_{1}^- e^{\rho_{1}^-{(x-a)}}\Big) e^{-\rho_2^- a}\\
&=
\big(1-c_1(q)\big)e^{-(\rho_1^-+\rho_2^-)a}
(\rho_2^-e^{\rho_2^-x}-\rho_1^-e^{\rho_1^-x}),
\end{align*}
and for $x>a$,
\begin{align*}
W'(x)&=\Big(\big(1-c_2(q)\big)\rho_{2}^+e^{\rho_{2}^+(x-a)}+c_2(q)\rho_{1}^+ e^{\rho_{1}^+(x-a)} \Big) \Big(c_1(q)e^{-\rho_2^- a} + \big(1-c_1(q)\big) e^{-\rho_1^- a}\Big)\\
&\ \
- \rho_{1}^+ e^{\rho_{1}^+(x-a)} e^{-\rho_2^- a}\\
&=\rho_2^+\Big(c_1(q)e^{-\rho_2^-a}+\big(1-c_1(q)\big)e^{-\rho_1^-a}\Big)
\big(1-c_2(q)\big)e^{\rho_2^+(x-a)}\\
&\ \  -\rho_1^+\Big(\big(1-c_1(q)c_2(q)\big)e^{-\rho_2^-a}-c_2(q)\big(1-c_1(q)\big)e^{-\rho_1^-a}\Big)
e^{\rho_1^+(x-a)}.
\end{align*}
In particular, by \eqref{g1qx} we have
\begin{align*}
(1-\beta)W'(a-)&=(1-\beta)\big(g_{2,q}'(a-)g_{1,q}(0)-g_{1,q}'(a-)g_{2,q}(0)\big)\\
&=(1+\beta)\big(g_{2,q}'(a+)g_{1,q}(0)-g_{1,q}'(a+)g_{2,q}(0)\big)\\
&=(1+\beta) W'(a+).
\end{align*}
This completes the proof.
\end{proof}
\subsection{Proof of Proposition \ref{propb-b+K}}
\begin{proof}\label{propb-b+K11}
The condition $W''(x)=0, x\in\mathbb{R}$, with $W''(x)$ given by \eqref{b11}, implies
\begin{align*}
\big(1-c_1(q)\big)e^{-(\rho_1^-+\rho_2^-)a}\big({\rho_2^-}^2e^{\rho_2^-x}
-{\rho_1^-}^2e^{\rho_1^-x}\big)=0,
\end{align*}
which simplifies to $e^{(\rho_2^- - \rho_1^-)x}
={\rho_1^-}^2/{\rho_2^-}^2$. Since ${\rho_1^-}^2/{\rho_2^-}^2>0$, solving for $x$ yields  $b_-$  in \eqref{b-}.
The condition $W''(x)=0, x\in\mathbb{R}$ implies by \eqref{b22} that
\begin{align*}
&{\rho_2^+}^2\big(1-c_2(q)\big)\Big(c_1(q)e^{-\rho_2^-a}+\big(1-c_1(q)\big)e^{-\rho_1^-a}\Big)
e^{\rho_2^+(x-a)}\\
& -{\rho_1^+}^2 \Big(\big(1-c_1(q)c_2(q)\big)e^{-\rho_2^-a}-c_2(q)\big(1-c_1(q)\big)e^{-\rho_1^-a}\Big)
e^{\rho_1^+(x-a)}=0.
\end{align*}
It follows that $e^{(\rho_2^+-\rho_1^+)(x-a)}=K(\beta)$ for $K(\beta)$ given by \eqref{K}. If $K(\beta)>0$, the expression for $b_+$ in \eqref{b+} is obtained.
\end{proof}
\subsection{Proof of Proposition \ref{b1min}}
\begin{proof}\label{b11min1}
\begin{enumerate}[label=(\roman*)]
\item If $\mu_-\leq 0$, then \eqref{dfgh21245} leads to $-\rho_1^- \leq \rho_2^-$, which in turn implies $\ln(-\rho_1^- / \rho_2^-)\leq 0$, and consequently, $b_-$, as defined in \eqref{b-}, satisfies $b_-\leq 0$.
Conversely, if $b_-\leq 0$, then by \eqref{b-} we get $\ln(-\rho_1^- / \rho_2^-)\leq0$, which further implies $-\rho_1^-\leq \rho_2^-$, and then, \eqref{dfgh21245} yield $\mu_-\leq 0$.
\item If $K(\beta)>1$, then as $\ln K(\beta)>0$, the expression of $b_+$ in \eqref{b+} implies $b_+> a$. Conversely, by \eqref{b+}, the necessary condition for $b_+> a$ is $\ln K(\beta)>0$, which further corresponds to $K(\beta)>1$.
\end{enumerate}
This completes the proof.
\end{proof}
\subsection{Proof of Lemma \ref{lem W}}
\begin{proof}\label{a.2}
Note that
\begin{align}\label{c1qea}
& c_1(q) e^{-\rho_2^-a}+\big(1-c_1(q)\big)e^{-\rho_1^- a}>c_1(q) e^{-\rho_2^-a}+\big(1-c_1(q)\big)e^{-\rho_2^- a}=e^{-\rho_2^-a}>0.
\end{align}
We first prove that $W(x)$ is a non-negative, continuous increasing function on $\mathbb{R}_+$ by considering three cases: $0\leq x< a$, $x=a$ and $x>a>0$.

For $0\leq x< a$,  by \eqref{wa-} we have $W'(x)>0$.

For $x=a$, by \eqref{wa-} we have
\begin{align*}
&W'(a-)=\big(1-c_1(q)\big)(\rho_2^-e^{-\rho_1^-a}-\rho_1^-e^{-\rho_2^-a})>0,
\end{align*}
and then, by \eqref{wa-a+} we have $W'(a+)>0$.

For $x>a>0$, the cases $c_2(q)< 0$ and $0\leq c_2(q)< 1$ are considered separately.
\begin{enumerate}[label=(\roman*)]
\item If $c_2(q)< 0$, then by \eqref{c1qea}
\begin{align*}
\rho_2^+ \Big(c_1(q)e^{-\rho_2^-a}+\big(1-c_1(q)\big)e^{-\rho_1^-a}\Big)
\big(1-c_2(q)\big)e^{\rho_2^+(x-a)}
>\rho_2^+ e^{-\rho_2^-a}
\big(1-c_2(q)\big)e^{\rho_2^+(x-a)},
\end{align*}
and
\begin{align*}
\rho_1^+ c_2(q)\big(1-c_1(q)\big)e^{-\rho_1^-a}
>\rho_1^+ c_2(q)\big(1-c_1(q)\big)e^{-\rho_2^-a}.
\end{align*}
Combining the above two inequalities,
we obtain
\begin{align*}
W'(x)
& >
\rho_2^+ e^{-\rho_2^-a}
\big(1-c_2(q)\big)e^{\rho_2^+(x-a)}\\
 & \ \ -\rho_1^+\Big(\big(1-c_1(q)c_2(q)\big)e^{-\rho_2^-a}-c_2(q)\big(1-c_1(q)\big)e^{-\rho_2^- a}\Big)
e^{\rho_1^+(x-a)}\\
& =
\big(1-c_2(q)\big)e^{-\rho_2^-a}
\big(\rho_2^+e^{\rho_2^+(x-a)}-\rho_1^+e^{\rho_1^+(x-a)}\big)>0.
\end{align*}
\item If $0\leq c_2(q)< 1$, then
$$\rho_2^+\big(1-c_2(q)\big)e^{\rho_2^+(x-a)}+\rho_1^+c_2(q)e^{\rho_1^+(x-a)}
>\rho_1^+c_2(q)\big(e^{\rho_1^+(x-a)}-e^{\rho_2^+(x-a)}\big)>0,$$
since
\begin{align}\label{ca66}
&\rho_2^+\big(1-c_2(q)\big)+\rho_1^+c_2(q)
=\rho_2^+ \frac{(1-\beta)\rho_2^--(1+\beta)\rho_1^+}{(1+\beta)(\rho_2^+-\rho_1^+)}
+\rho_1^+ \frac{(1+\beta) \rho_{2}^+-(1-\beta)\rho_{2}^-}{(1+\beta)(\rho_{2}^+-\rho_{1}^+)}\nonumber\\
&=\frac{(1-\beta)\rho_2^-\rho_{2}^+-(1-\beta)\rho_{2}^-\rho_1^+}
{(1+\beta)(\rho_{2}^+-\rho_{1}^+)}
=\frac{(1-\beta)\rho_2^-}{(1+\beta)}>0.
\end{align}
It follows from  \eqref{wa221} and \eqref{c1qea} that
\begin{align*}
W'(x)
&=\Big(c_1(q)e^{-\rho_2^-a}+\big(1-c_1(q)\big)e^{-\rho_1^-a}\Big)
\Big(\rho_2^+\big(1-c_2(q)\big)e^{\rho_2^+(x-a)}+\rho_1^+c_2(q)e^{\rho_1^+(x-a)}\Big)\\
& \ \ - \rho_1^+ e^{-(\rho_1^+ +\rho_2^-)a+\rho_1^+x}\\
& > e^{-\rho_2^-a}
\Big(\rho_2^+\big(1-c_2(q)\big)e^{\rho_2^+(x-a)}+\rho_1^+c_2(q)e^{\rho_1^+(x-a)}\Big)
- \rho_1^+e^{-(\rho_1^+ +\rho_2^-)a+\rho_1^+x}\\
&>0.
\end{align*}
\end{enumerate}

By \eqref{g1} and \eqref{g2} we have $g_{1,q}(a\pm)=g_{2,q}(a\pm)=1$, and then
\begin{eqnarray*}
&&W(a)=W(a-)=W(a+)=g_{1,q}(0)-g_{2,q}(0)
=\big(1-c_1(q)\big)\big(e^{-\rho_1^-a}-e^{-\rho_2^-a}\big)>0.
\end{eqnarray*}
In addition, $W(0) = g_{2,q}(0)g_{1,q}(0)-g_{1,q}(0)g_{2,q}(0)=0$. Combining the above,
$W$ is a non-negative, continuous increasing function on $\mathbb{R}_+$.

We next discuss the monotonicity and convexity of $W'(x)$, which can be divided into two cases: $0\leq x< a$ and $x>a>0$.
(\uppercase\expandafter{\romannumeral1})
For $0\leq x<a$, taking the derivative of $W''$ defined in \eqref{b11} yields
\begin{eqnarray*}
&&W'''(x)=\big(1-c_1(q)\big)e^{-(\rho_2^-+\rho_1^-)a}\big({\rho_2^-}^3e^{\rho_2^-x}
-{\rho_1^-}^3e^{\rho_1^-x}\big)>0.
\end{eqnarray*}
Then $W''(x)$ is strictly increasing in $x$.
Recall the solution $b_-$ in \eqref{b-} of  $W''(x)=0$.
In the following we  consider  different values of   $b_-$.
\begin{enumerate}[label=(\roman*)]
\item $b_-\leq 0$ if and only if $W''(x)>0$  if and only if  $W'(x)$ is strictly increasing in $x$,
\item $0<b_-<a$ if and only if $W''(x)<0$ for $x\in[0,b_-)$ and $W''(x)>0$ for $x\in(b_-, a)$ if and only if $W'(x)$ is non-monotone convex,
\item $a\leq b_-$ if and only if $W''(x)<0$ if and only if $W'(x)$ is strictly decreasing in $x$.
\end{enumerate}

(\uppercase\expandafter{\romannumeral2}) For $x>a>0$, we further discuss the monotonicity and convexity of $W'(x)$ for $K(\beta)\leq 1$ and $K(\beta)> 1$, respectively.
\begin{enumerate}[label=(\roman*)]
\item If $K(\beta)\leq 1$, then the numerator and denominator in \eqref{K} satisfy
\begin{align*}
&{\rho_1^+}^2\big((1-c_1(q)c_2(q))e^{-\rho_2^-a}-c_2(q)(1-c_1(q))
e^{-\rho_1^-a}\big)\\
& \leq {\rho_2^+}^2(1-c_2(q))(c_1(q)e^{-\rho_2^-a}+(1-c_1(q))e^{-\rho_1^-a}),
\end{align*}
which leads to $W''(x)$, defined by \eqref{b22}, satisfying
\begin{align*}
W{''}(x)
&\geq {\rho_2^+}^2\big(1-c_2(q)\big)\Big(c_1(q)e^{-\rho_2^-a}+\big(1-c_1(q)\big)e^{-\rho_1^-a}\Big)
e^{\rho_2^+(x-a)}\\
&\ \  -{\rho_2^+}^2\big(1-c_2(q)\big)\Big(c_1(q)e^{-\rho_2^-a}+\big(1-c_1(q)\big)e^{-\rho_1^-a}\Big)
e^{\rho_1^+(x-a)}\\
&= {\rho_2^+}^2\big(1-c_2(q)\big)\Big(c_1(q)e^{-\rho_2^-a}+\big(1-c_1(q)\big)e^{-\rho_1^-a}\Big)
(e^{\rho_2^+(x-a)}-e^{\rho_1^+(x-a)})\\
&> {\rho_2^+}^2 \big(1-c_2(q)\big) e^{-\rho_2^-a} (e^{\rho_2^+(x-a)}-e^{\rho_1^+(x-a)})>0,
\end{align*}
where \eqref{c1qea} is applied for the last inequality.
Then $W'(x)$ is strictly increasing.
\item
By \eqref{b22},
\begin{align*} W{'''}(x)&={\rho_2^+}^3\Big(c_1(q)e^{-\rho_2^-a}+\big(1-c_1(q)\big)e^{-\rho_1^-a}\Big)
	\big(1-c_2(q)\big)e^{\rho_2^+(x-a)}\\
	& \ \  -{\rho_1^+}^3\Big(\big(1-c_1(q)c_2(q)\big)e^{-\rho_2^-a}-c_2(q)\big(1-c_1(q)\big)
	e^{-\rho_1^-a}\Big)e^{\rho_1^+(x-a)}.
\end{align*}
If $K(\beta)>1$, since the denominator of $K(\beta)$ is positive, we have
\begin{align*}
&\big(1-c_1(q)c_2(q)\big)e^{-\rho_2^-a}-c_2(q)\big(1-c_1(q)\big)
e^{-\rho_1^-a}>0,
\end{align*}
which combined with  \eqref{c1qea} implies $W{'''}(x)>0$, i.e. $W''$ is strictly increasing.
By Proposition \ref{b1min} (ii) we have $b_+>a$ for $W{''}(b_+)=0$. Then $W''(x)<0$ for $x\in(a, b_+)$ and $W''(x)>0$ for $x\in(b_+, \infty)$. Thus, $W'$ is non-monotone convex on $(a, \infty)$.
\end{enumerate}
This completes the proof.
\end{proof}
\subsection{Proof of Proposition \ref{prop W1}}
\begin{proof}\label{a.4}
We write $c_{1,\beta}(q)$ for $c_1(q)$ to emphasize its dependence on $\beta$. Then
\begin{align}\label{sc1c21}
&\frac{\mathrm{d} c_{1,\beta}(q)}{\mathrm{d}\beta}
=\frac{\mathrm{d}}{\mathrm{d}\beta}\Big(\frac{(1+\beta) \rho_{1}^+-(1-\beta)\rho_{1}^-}
{(1-\beta)(\rho_{2}^--\rho_{1}^-)}\Big)\nonumber\\
&=\frac{(\rho_{1}^+ + \rho_{1}^-) (1-\beta)(\rho_{2}^--\rho_{1}^-) + \big((1+\beta)\rho_{1}^+ - (1-\beta)\rho_{1}^-\big)(\rho_{2}^--\rho_{1}^-)}
{(1-\beta)^2(\rho_{2}^--\rho_{1}^-)^2}\nonumber\\
&= \frac{(\rho_{1}^+ + \rho_{1}^-) (1-\beta) + (1+\beta)\rho_{1}^+ - (1-\beta)\rho_{1}^-}
{(1-\beta)^2(\rho_{2}^--\rho_{1}^-)}
=\frac{2\rho_1^+}{(1-\beta)^2(\rho_2^--\rho_1^-)}.
\end{align}

By \eqref{wa-},
\begin{align}\label{wa-betadre11}
&W_{\beta}'(a-)=\big(1-c_{1,\beta}(q)\big)(\rho_2^-e^{-\rho_1^-a}-\rho_1^-e^{-\rho_2^-a}).
\end{align}
Then applying \eqref{sc1c21} we have
\begin{align}\label{wa-betadre}
\frac{\mathrm{d}W_{\beta}'(a-)}{\mathrm{d}\beta}
&=\frac{-2\rho_1^+}{(1-\beta)^2(\rho_2^--\rho_1^-)}
(\rho_2^-e^{-\rho_1^-a}-\rho_1^-e^{-\rho_2^-a})>0,
\end{align}
and $W_{\beta}'(a-)$ is thus strictly increasing in $\beta$ for $\beta\in(-1, 1)$.
By (\ref{wa-a+}) we have
$W_{\beta}'(a+)=\frac{1-\beta}{1+\beta} W_{\beta}'(a-)$.
Then
\begin{align*}
&\frac{\mathrm{d}W_{\beta}'(a+)}{\mathrm{d}\beta}
=\frac{\mathrm{d}}{\mathrm{d}\beta}\Big(\frac{1-\beta}{1+\beta}\Big) W_{\beta}'(a-)
+\frac{1-\beta}{1+\beta} \frac{\mathrm{d}W_{\beta}'(a-)}{\mathrm{d}\beta}.
\end{align*}
Plugging \eqref{wa-betadre11} and \eqref{wa-betadre} into the above expression, we have
\begin{align*}
\frac{\mathrm{d}W_{\beta}'(a+)}{\mathrm{d}\beta}
&=\frac{-2}{(1+\beta)^2} \cdot \frac{(1-\beta)\rho_2^--(1+\beta)\rho_1^+}{(1-\beta)(\rho_2^--\rho_1^-)} (\rho_2^-e^{-\rho_1^-a}-\rho_1^-e^{-\rho_2^-a})\\
&\ \ +\frac{1-\beta}{1+\beta} \cdot \frac{-2\rho_1^+}{(1-\beta)^2(\rho_2^--\rho_1^-)}
(\rho_2^-e^{-\rho_1^-a}-\rho_1^-e^{-\rho_2^-a})\\
&=\frac{-2\rho_2^-}{(1+\beta)^2(\rho_2^--\rho_1^-)}
(\rho_2^-e^{-\rho_1^-a}-\rho_1^-e^{-\rho_2^-a}).
\end{align*}
Then $\mathrm{d}W_{\beta}'(a+)/ \mathrm{d}\beta<0$ and  $W_{\beta}'(a+)$ is strictly decreasing in $\beta$ for $\beta\in(-1, 1)$.

For $\beta=0$, by \eqref{wa-} and \eqref{wa221} we obtain
$$W_{0}'(a-)=W_{0}'(a+)=\frac{\rho_2^--\rho_1^+}{\rho_2^--\rho_1^-}
(\rho_2^-e^{-\rho_1^-a}-\rho_1^-e^{-\rho_2^-a}).$$
By the monotonicity of $W_{\beta}'(a-)$ and $W_{\beta}'(a+)$ in $\beta$, we have $W_{\beta}'(a-)<W_{\beta}'(a+)$ if and only if $\beta\in(-1, 0)$.
\end{proof}
\subsection{Proof of Lemma \ref{thm vbc2}}
\begin{proof}\label{thm vbc21}
In the proof we keep $0\leq b_1< a< b_2$ and $0\leq b_1\leq a_1\leq b_2$. For $x\in[0, a_1]$, since no dividend is paid before $X$ reaches $b_1$, and dividends are continuously paid while $X$ remains between $b_1$ and $a_1$ until $X$ decreases to $b_1$, $V_{b_1, a_1, b_2}(x)$ is equivalent to $V_{b_1}(x)$, and by \eqref{val 1}, we have that, for $0\leq x\leq b_1$,
$V_{b_1, a_1, b_2}(x) = \frac{W(x)}{W'(b_1)}$,
and for $b_1< x\leq a_1$,
\begin{align*}
&V_{b_1, a_1, b_2}(x) = x-b_1+\frac{W(b_1)}{W'(b_1)}.
\end{align*}
Particularly, for $x=a_1$, we have
\begin{eqnarray}\label{vbab 256}
V_{b_1, a_1, b_2}(a_1)=a_1-b_1+\frac{W(b_1)}{W'(b_1)}.
\end{eqnarray}

For $a_1< x< b_2$, applying the strong Markov property together with the fact that no dividends are paid out until $X$ exceeds the interval $(a_1, b_2)$, by \eqref{e1}, \eqref{e2} and \eqref{vbab 256} we have
\begin{align}\label{vbab 3}
V_{b_1, a_1, b_2}(x)
&=\mathbb{E}_x[e^{-q \tau_{a_1}}; \tau_{a_1}<\tau_{b_2}]V_{b_1, a_1, b_2}(a_1)
+\mathbb{E}_x[e^{-q \tau_{b_2}}; \tau_{b_2}<\tau_{a_1}]V_{b_1, a_1, b_2}(b_2)\nonumber\\
&=\frac{w(x,b_2)}{w(a_1,b_2)}\big(a_1-b_1+\frac{W(b_1)}{W'(b_1)}\big)
+\frac{w(x,a_1)}{w(b_2,a_1)}V_{b_1, a_1, b_2}(b_2).
	\end{align}
We will now determine the expression for $V_{b_1, a_1, b_2}(b_2)$ by considering two cases: $a\leq a_1$ and $a>a_1$. For $a\leq a_1$, re-applying the strong Markov property, by \eqref{val 6}, \eqref{val 5} and \eqref{vbab 256} we have
\begin{align}\label{bbb2}
V_{b_1, a_1, b_2}(b_2)&=\mathbb{E}_{b_2}\big[\int_0^{\hat{\tau}_{a_1}}e^{-q t} \mathrm{d}D_t^{\pi_{b_1, a_1, b_2}}\big]
+\mathbb{E}_{b_2}[e^{-q \hat{\tau}_{a_1}}] V_{b_1, a_1, b_2}(a_1)\nonumber\\
&=\frac{w(b_2,a_1)}{w_{b_2}(b_2,a_1)}+\frac{w_{b_2}(b_2, b_2)}{w_{b_2}(b_2,a_1)}
\big(a_1-b_1+\frac{W(b_1)}{W'(b_1)}\big).
	\end{align}
For  $a> a_1$, by \eqref{vbab 3} we have
\begin{eqnarray}\label{vbab 1256}
		V_{b_1, a_1, b_2}(a)=\frac{w(a,b_2)}{w(a_1,b_2)}\big(a_1-b_1+\frac{W(b_1)}{W'(b_1)}\big)
+\frac{w(a,a_1)}{w(b_2,a_1)}V_{b_1, a_1, b_2}(b_2),
	\end{eqnarray}
and by \eqref{val 6} and \eqref{val 5}, we have
\begin{align}\label{val uyt}
V_{b_1, a_1, b_2}(b_2)&=\mathbb{E}_{b_2}\big[\int_0^{\hat{\tau}_a}e^{-q t} \mathrm{d}D_t^{\pi_{b_1, a_1, b_2}}\big]
+\mathbb{E}_{b_2}[e^{-q \hat{\tau}_a}] V_{b_1, a_1, b_2}(a)\nonumber\\
&=\frac{w(b_2,a)}{w_{b_2}(b_2,a)}+\frac{w_{b_2}(b_2, b_2)}
{w_{b_2}(b_2,a)}V_{b_1, a_1, b_2}(a),
	\end{align}
and then, by solving a system of equations in \eqref{vbab 1256} and \eqref{val uyt}, we can also find the expression for $V_{b_1, a_1, b_2}(b_2)$ in \eqref{bbb2}. Further plugging \eqref{bbb2} into \eqref{vbab 3}, by $w(x, y)= -w(y, x)$, we get, for $a_1< x< b_2$,
\begin{align*}
 V_{b_1, a_1, b_2}(x)
& = \frac{w(x,a_1)}{w_{b_2}(b_2,a_1)}
+ \Big(\frac{w(x,b_2)}{w(a_1,b_2)} + \frac{w(x,a_1)w_{b_2}(b_2, b_2)}{w(b_2,a_1)w_{b_2}(b_2,a_1)} \Big) \,
\big(a_1-b_1+\frac{W(b_1)}{W'(b_1)}\big)\\
&= \frac{w(x,a_1)}{w_{b_2}(b_2,a_1)}
+ \frac{w(b_2, x)w_{b_2}(b_2,a_1) - w(a_1, x)w_{b_2}(b_2, b_2) }{w(b_2,a_1)w_{b_2}(b_2,a_1)} \,
\big(a_1-b_1+\frac{W(b_1)}{W'(b_1)}\big).
\end{align*}
In the same manner as \eqref{w1w2w3rela}, we have
$$w(b_2, x)w_{b_2}(b_2,a_1) - w(a_1, x)w_{b_2}(b_2, b_2)
= w(b_2,a_1) w_{b_2}(b_2, x),$$
and then,
\begin{align*}
& V_{b_1, a_1, b_2}(x)
= \frac{w(x, a_1)}{w_{b_2}(b_2, a_1)}+\big(a_1-b_1+\frac{W(b_1)}{W'(b_1)}\big)
 \frac{w_{b_2}(b_2, x)}{w_{b_2}(b_2, a_1)}.
\end{align*}

For $b_2< x< \infty$, since no dividends are paid before $X$ reaches $b_2$, by \eqref{bbb2} we have
\begin{align*}
V_{b_1, a_1, b_2}(x)
&=x-b_2+\frac{w(b_2, a_1)}{w_{b_2}(b_2, a_1)}+\big(a_1-b_1+\frac{W(b_1)}{W'(b_1)}\big) \frac{w_{b_2}(b_2, b_2)}{w_{b_2}(b_2, a_1)}.
	\end{align*}
Notice that $V_{b_1, a_1, b_2}\in C(\mathbb{R_+})$.
\end{proof}
\subsection{Proof of Proposition \ref{wb2b2a1u2}}
\begin{proof}\label{wb2b2a1u1}
From $w(x,y)$ defined in \eqref{wxy} and its partial derivative $w_x(x,y)$ in \eqref{wxpart}, together with the associated $g_{1,q}(x)$ and $g_{2,q}(x)$ given in \eqref{g1} and \eqref{g2}, we obtain, for $0< a< a_1\leq b_2$, the form of $w_{b_2}(b_2, a_1)$ is given by \eqref{wb2b1fu3}, which expresses $w_{b_2}(b_2, a_1)$ as the product of three positive terms, implying that $w_{b_2}(b_2, a_1)>0$.

For $0\leq a_1\leq a< b_2$,
\begin{align}
w_{b_2}(b_2, a_1)&=\Big(\rho_2^+\big(1-c_{2}(q)\big) e^{\rho_2^+(b_2-a)}+\rho_1^+c_{2}(q)e^{\rho_1^+(b_2-a)}\Big)\label{wb2b1fu2}\\
&\ \ \cdot \Big(c_{1}(q)e^{\rho_2^-(a_1-a)}+\big(1-c_{1}(q)\big)e^{\rho_1^-(a_1-a)}\Big)
-\rho_1^+ e^{\rho_1^+(b_2-a)}e^{\rho_2^-(a_1-a)}.\nonumber
\end{align}
Separating the terms involving $e^{\rho_2^+(b_2-a)}$ and $e^{\rho_1^+(b_2-a)}$ in \eqref{wb2b1fu2}, we rewrite $w_{b_2}(b_2, a_1)$ for $0\leq a_1\leq a< b_2$ in the form shown in \eqref{wb2b1fu1}. We now prove $w_{b_2}(b_2, a_1)>0$ by considering two cases: $c_2(q)< 0$ and $0\leq c_2(q)< 1$. Note that
\begin{align}\label{certw}
&c_1(q)e^{\rho_2^-(a_1-a)}+\big(1-c_1(q)\big)e^{\rho_1^-(a_1-a)}\nonumber\\
& \geq
c_1(q)e^{\rho_2^-(a_1-a)}+\big(1-c_1(q)\big)e^{\rho_2^- (a_1-a)}
=e^{\rho_2^-(a_1-a)}> 0.
\end{align}
\begin{enumerate}[label=(\roman*)]
\item If $c_2(q)< 0$, then by \eqref{1c1q11} and $e^{\rho_1^-(a_1-a)}\geq1\geq e^{\rho_2^-(a_1-a)}$ we have
$$c_{2}(q)\big(1-c_{1}(q)\big)e^{\rho_1^-(a_1-a)}\leq c_{2}(q)\big(1-c_{1}(q)\big)e^{\rho_2^-(a_1-a)},$$
which leads to
$$\big(1-c_{1}(q)c_{2}(q)\big)e^{\rho_2^-(a_1-a)}
-c_{2}(q)\big(1-c_{1}(q)\big)e^{\rho_1^-(a_1-a)}\geq \big(1-c_{2}(q)\big)e^{\rho_2^-(a_1-a)}> 0.$$
By combining this with \eqref{1c1q11} and \eqref{certw}, we deduce that $w_{b_2}(b_2, a_1)$, as given in \eqref{wb2b1fu1}, is the sum of two positive terms, implying $w_{b_2}(b_2, a_1)>0$.
\item If $0\leq c_2(q)< 1$, then by \eqref{ca66}
\[\rho_2^+\big(1-c_2(q)\big)e^{\rho_2^+(b_2-a)}
+\rho_1^+ c_2(q)e^{\rho_1^+(b_2-a)}>\Big(\rho_2^+\big(1-c_2(q)\big)+ \rho_1^+ c_2(q)\Big)e^{\rho_1^+(b_2-a)}>0. \]
Therefore, $w_{b_2}(b_2, a_1)>0$.
\end{enumerate}
This completes the proof.
\end{proof}
\subsection{Proof of Proposition \ref{va1vb21kl}}
\begin{proof}\label{va1vb21kl34}
We show the differentiability of $V_{b_1, a_1, b_2}(x)$ at $x=a_1$ and $x=b_{2}$. by Lemma \ref{thm vbc2} we have $V_{b_1, a_1, b_2}'(x)=1$ for $x\in[b_1, a_1)$ and then $V_{b_1, a_1, b_2}'(a_1-)=1$. Since  $w_{b_2,b_2}\big(b_{2}, b_{2}\big):=w_{x,y}(x, y)|_{x=y=b_2}=0$, by Lemma \ref{thm vbc2} we obtain
\begin{align*}
&V_{b_1, a_1, b_2}'(b_{2}-)=\frac{w_{b_2}(b_{2}, a_{1})}{w_{b_2}(b_{2}, a_{1})}+
 \frac{w_{b_2,b_2}(b_{2}, b_{2})}{w_{b_2}(b_{2}, a_{1})}V_{b_1, a_1, b_2}(a_{1})
=1,\\
&V_{b_1, a_1, b_2}'(b_{2}+)=1,
\end{align*}
which implies $V_{b_1, a_1, b_2}'(b_{2})=1$.
\end{proof}
\subsection{Proof of Lemma \ref{lem b1a125}}
\begin{proof}\label{b1a1225}
In the proof we keep $0\leq b_1\leq a_1 < a< b_2$. Preliminarily evaluating $V_{b_1, a_1, b_2}'(x)$ and $V_{b_1, a_1, b_2}''(x)$, as defined in \eqref{vb1a1b266} and \eqref{vsecde}, respectively, at $x=a_1$ yields
\begin{align}
&V_{b_1, a_1, b_2}'(a_1)
=\frac{\tilde{K}_{2}(b_1, a_1, b_2){\rho_2^-} e^{\rho_2^-(a_1-a)}-\tilde{K}_{1}(b_1, a_1, b_2){\rho_1^-} e^{\rho_1^-(a_1-a)}}{w_{b_2}(b_2, a_1)},\nonumber\\
&V_{b_1, a_1, b_2}''(a_1)
=\frac{\tilde{K}_{2}(b_1, a_1, b_2){\rho_2^-}^2 e^{\rho_2^-(a_1-a)}-\tilde{K}_{1}(b_1, a_1, b_2){\rho_1^-}^2 e^{\rho_1^-(a_1-a)}}{w_{b_2}(b_2, a_1)}.\label{vsecghj}
\end{align}
By referring to the definitions of $g_{i,q}(x)$ for $i=1, 2$ in \eqref{g1} and \eqref{g2}, $\tilde{K}_{1}(b_1, a_1, b_2)$ and $\tilde{K}_{2}(b_1, a_1, b_2)$, as given in \eqref{k1b1a1} and \eqref{k1b1a2}, respectively, can be expanded as
\begin{align}
&\tilde{K}_{1}(b_1, a_1, b_2)
=\big(1-c_1(q)\big)\label{kkk111}\\
& \ \ \ \ \ \ \ \ \  \cdot
\Big(e^{\rho_2^- (a_1-a)}-V_{b_1, a_1, b_2}(a_1)\big(\rho_2^+(1-c_2(q)) e^{\rho_2^+(b_2-a)}+\rho_1^+c_2(q) e^{\rho_1^+(b_2-a)}\big)\Big),\nonumber\\
&\tilde{K}_{2}(b_1, a_1, b_2)
=\big(1-c_1(q)\big)e^{\rho_1^- (a_1-a)}\label{kkk222}\\
&\ \ \ \ \ \ \ \ \
+V_{b_1, a_1, b_2}(a_1)\Big(\rho_2^+ c_1(q)\big(1-c_2(q)\big) e^{\rho_2^+ (b_2-a)}
-\rho_1^+ \big(1-c_1(q) c_2(q)\big) e^{\rho_1^+(b_2-a)}\Big).\nonumber
\end{align}
Then, substituting \eqref{kkk111} and \eqref{kkk222} into  $V_{b_1, a_1, b_2}'(a_1)$,
we obtain
\begin{align*}
&V_{b_1, a_1, b_2}'(a_1) w_{b_2}(b_2, a_1)
=
({\rho_2^-}-{\rho_1^-})\big(1-c_1(q)\big)e^{(\rho_2^- +\rho_1^-)(a_1-a)}\\
& +V_{b_1, a_1, b_2}(a_1) \Big(
\rho_2^+ \big(1-c_2(q)\big)\big({\rho_2^-} c_1(q) e^{\rho_2^- (a_1-a)}
+{\rho_1^-} (1-c_1(q)) e^{\rho_1^- (a_1-a)}\big)e^{\rho_2^+(b_2-a)}\nonumber\\
&\ \ \ \ \ \ \ \ \ \ \ \ \ \ -\rho_1^+\big({\rho_2^-} (1-c_1(q)c_2(q)) e^{\rho_2^- (a_1-a)}
-{\rho_1^-} c_2(q) (1-c_1(q)) e^{\rho_1^- (a_1-a)}\big)e^{\rho_1^+(b_2-a)}\Big).\nonumber
\end{align*}
Since $V_{b_1,a_1, b_2}'(a_1)=1$, by \eqref{wb2b1fu1} we have
\begin{align}\label{asdw11}
&({\rho_2^-}-{\rho_1^-})\big(1-c_1(q)\big)e^{(\rho_2^- +\rho_1^-)(a_1-a)}\nonumber\\
& =\rho_2^+ \big(1-c_{2}(q)\big)\Big(c_{1}(q)e^{\rho_2^-(a_1-a)}+\big(1-c_{1}(q)\big)e^{\rho_1^-(a_1-a)}\Big)
e^{\rho_2^+(b_2-a)}\nonumber\\
&\ \ -\rho_1^+\Big(\big(1-c_{1}(q)c_{2}(q)\big)e^{\rho_2^-(a_1-a)}
-c_{2}(q)\big(1-c_{1}(q)\big)e^{\rho_1^-(a_1-a)}\Big) e^{\rho_1^+(b_2-a)}\nonumber\\
& \ \ -V_{b_1, a_1, b_2}(a_1)
\Big(
\rho_2^+ \big(1-c_2(q)\big)\big({\rho_2^-} c_1(q) e^{\rho_2^- (a_1-a)}
+{\rho_1^-} (1-c_1(q)) e^{\rho_1^- (a_1-a)}\big)e^{\rho_2^+(b_2-a)}\nonumber\\
&\ \ \ \ \ \ \ \ \ \ \ \ \ \ \ \ \ \ \ \ \ \ \ \ -\rho_1^+\big({\rho_2^-} (1-c_1(q)c_2(q)) e^{\rho_2^- (a_1-a)}
-{\rho_1^-} c_2(q) (1-c_1(q)) e^{\rho_1^- (a_1-a)}\big)e^{\rho_1^+(b_2-a)}\Big)\nonumber\\
&=\rho_2^+\big(1-c_2(q)\big)\Big(c_1(q)\big(1-\rho_2^-V_{b_1,a_1, b_2}(a_1)\big)e^{\rho_2^- (a_1-a)}\nonumber\\
&\ \ \ \ \ \ \ \ \ \ \ \ \ \ \ \ \ \ \ \ \ \ \ \ \ \ +\big(1-c_1(q)\big)\big(1-\rho_1^-V_{b_1,a_1, b_2}(a_1)\big)e^{\rho_1^- (a_1-a)}\Big)e^{\rho_2^+ (b_2-a)}\nonumber\\
&\ \ -\rho_1^+\Big(\big(1-c_1(q)c_2(q)\big)\big(1-\rho_2^-V_{b_1,a_1, b_2}(a_1)\big)e^{\rho_2^- (a_1-a)}\nonumber\\
&\ \ \ \ \ \ \ \ \ \ \ -c_2(q)\big(1-c_1(q)\big)\big(1-\rho_1^-V_{b_1,a_1, b_2}(a_1)\big)e^{\rho_1^- (a_1-a)}\Big)e^{\rho_1^+ (b_2-a)}.
\end{align}
For $\tilde{K}_{1}(b_1, a_1, b_2)$ as given in \eqref{kkk111}, by \eqref{asdw11} we have
\begin{align*}
&\tilde{K}_{1}(b_1, a_1, b_2)\\
&=\frac{e^{-\rho_1^- (a_1-a)}}{\rho_2^- -\rho_1^-}
 \Big(\rho_2^+\big(1-c_2(q)\big)\big(c_1(q)\big(1-\rho_2^-V_{b_1,a_1, b_2}(a_1)\big)e^{\rho_2^- (a_1-a)}\nonumber\\
&\ \ \ \ \ \ \ \ \ \ \ \ \ \ \ \ \ \ \ \ \ \ \ \ \ \ \ \ \ \ \ \ \ \ \ \ \ \ \ \ \ \ \ \ \ \ +\big(1-c_1(q)\big)\big(1-\rho_1^-V_{b_1,a_1, b_2}(a_1)\big)e^{\rho_1^- (a_1-a)}\big)e^{\rho_2^+ (b_2-a)}\nonumber\\
&\ \ \ \ \ \ \ \ \ \ \ \ \ \ \ \ \ \ \ \ \ \ -\rho_1^+\big(\big(1-c_1(q)c_2(q)\big)\big(1-\rho_2^-V_{b_1,a_1, b_2}(a_1)\big)e^{\rho_2^- (a_1-a)}\nonumber\\
&\ \ \ \ \ \ \ \ \ \ \ \ \ \ \ \ \ \ \ \ \ \ \ \ \ \ \ \ \ \ \ -c_2(q)\big(1-c_1(q)\big)\big(1-\rho_1^-V_{b_1,a_1, b_2}(a_1)\big)e^{\rho_1^- (a_1-a)}\big)e^{\rho_1^+ (b_2-a)}\Big)\nonumber\\
& \ \ - \big(1-c_1(q)\big) V_{b_1, a_1, b_2}(a_1)\Big(\rho_2^+\big(1-c_2(q)\big) e^{\rho_2^+(b_2-a)}+\rho_1^+c_2(q) e^{\rho_1^+(b_2-a)}\Big).
\end{align*}
Grouping the terms involving $V_{b_1, a_1, b_2}(a_1)$, the above expression becomes
\begin{align*}
&\tilde{K}_{1}(b_1, a_1, b_2)\nonumber\\
&=\frac{1}{\rho_2^- -\rho_1^-}
\Big(\rho_2^+\big(1-c_2(q)\big)\big(c_1(q) e^{(\rho_2^- - \rho_1^-) (a_1-a)}+\big(1-c_1(q)\big)\big)e^{\rho_2^+ (b_2-a)}\nonumber\\
&  \ \ \ \ \ \ \ \ \ \ \ \ \ \ \ \ \ \ \ -\rho_1^+\big(\big(1-c_1(q)c_2(q)\big) e^{(\rho_2^- -\rho_1^-) (a_1-a)} -c_2(q)\big(1-c_1(q)\big)\big)e^{\rho_1^+ (b_2-a)}\Big)\nonumber\\
& \ \
- \frac{V_{b_1,a_1, b_2}(a_1)}{\rho_2^- -\rho_1^-} \Big(\rho_2^+\big(1-c_2(q)\big)\big(c_1(q)\rho_2^- e^{(\rho_2^- - \rho_1^-) (a_1-a)}
+\big(1-c_1(q)\big)\rho_1^-\big)e^{\rho_2^+ (b_2-a)}\nonumber\\
& \ \ \ \ \ \ \ \ \ \ \ \ \ \ \ \ \ \ \ \ \ \ \ \ \ \ -\rho_1^+\big(\big(1-c_1(q)c_2(q)\big)\rho_2^- e^{(\rho_2^- -\rho_1^-) (a_1-a)} -c_2(q)\big(1-c_1(q)\big) \rho_1^-\big)e^{\rho_1^+ (b_2-a)}\Big)\nonumber\\
& \ \ - V_{b_1, a_1, b_2}(a_1) \big(1-c_1(q)\big) \Big(\rho_2^+\big(1-c_2(q)\big) e^{\rho_2^+(b_2-a)}+\rho_1^+c_2(q) e^{\rho_1^+(b_2-a)}\Big)\nonumber\\
&=\frac{1-\rho_2^-V_{b_1, a_1, b_2}(a_1)}{\rho_2^- -\rho_1^- }
\Big(\rho_2^+\big(1-c_2(q)\big)\big(c_1(q) e^{(\rho_2^--\rho_1^-)(a_1-a)}+(1-c_1(q))\big)
e^{\rho_2^+ (b_2-a)}\nonumber\\
&\ \ \ \ \ \ \ \ \ \ \ \ \ \ \ \ \ \ \ \  \ \ \ \ \ \ \ \ \ \ \ \ \ \ \ \ \ \ \ -\rho_1^+\big((1-c_1(q)c_2(q)) e^{(\rho_2^--\rho_1^-)(a_1-a)}-c_2(q) (1-c_1(q))\big)e^{\rho_1^+ (b_2-a)}\Big)\nonumber\\
&=\frac{1-\rho_2^-V_{b_1, a_1, b_2}(a_1)}{\rho_2^- -\rho_1^-}
w_{b_2}(b_2, a_1) e^{-\rho_1^-(a_1-a)}.\nonumber
\end{align*}
Similarly, by \eqref{kkk222} and \eqref{asdw11},
\begin{align*}
&\tilde{K}_{2}(b_1, a_1, b_2)\\
&=\frac{e^{-\rho_2^- (a_1-a)}}{\rho_2^- -\rho_1^-}
  \Big(\rho_2^+\big(1-c_2(q)\big)\big(c_1(q)\big(1-\rho_2^-V_{b_1,a_1, b_2}(a_1)\big)e^{\rho_2^- (a_1-a)}\nonumber\\
&\ \ \ \ \ \ \ \ \ \ \ \ \ \ \ \ \ \ \ \ \ \ \ \ \ \ \ \ \ \ \ \ \ \ \ \ \ \ \ \ \ \ \ \ \ \ \ +\big(1-c_1(q)\big)\big(1-\rho_1^-V_{b_1,a_1, b_2}(a_1)\big)e^{\rho_1^- (a_1-a)}\big)e^{\rho_2^+ (b_2-a)}\nonumber\\
&\ \ \ \ \ \ \ \ \ \ \ \ \ \ \ \ \ \ \ \ \ -\rho_1^+\big(\big(1-c_1(q)c_2(q)\big)\big(1-\rho_2^-V_{b_1,a_1, b_2}(a_1)\big)e^{\rho_2^- (a_1-a)}\nonumber\\
&\ \ \ \ \ \ \ \ \ \ \ \ \ \ \ \ \ \ \ \ \ \ \ \ \ \ \ \ \ -c_2(q)\big(1-c_1(q)\big)\big(1-\rho_1^-V_{b_1,a_1, b_2}(a_1)\big)e^{\rho_1^- (a_1-a)}\big)e^{\rho_1^+ (b_2-a)}\Big)\nonumber\\
&\ \
+V_{b_1, a_1, b_2}(a_1)\Big(\rho_2^+ c_1(q)\big(1-c_2(q)\big) e^{\rho_2^+ (b_2-a)}
-\rho_1^+ \big(1-c_1(q) c_2(q)\big) e^{\rho_1^+(b_2-a)}\Big)\nonumber\\
&=\frac{1}{\rho_2^- -\rho_1^-}
\Big(\rho_2^+\big(1-c_2(q)\big)\big(c_1(q) +\big(1-c_1(q)\big)e^{(\rho_1^- - \rho_2^-) (a_1-a)}\big)e^{\rho_2^+ (b_2-a)}\nonumber\\
&\ \ \ \ \ \ \ \ \ \ \ \ \ \ \ \ \ \ \ -\rho_1^+\big(\big(1-c_1(q)c_2(q)\big) -c_2(q)\big(1-c_1(q)\big)e^{(\rho_1^- -\rho_2^-) (a_1-a)} \big)e^{\rho_1^+ (b_2-a)}\Big)\nonumber\\
&\ \
- \frac{V_{b_1,a_1, b_2}(a_1)}{\rho_2^- -\rho_1^-} \Big(\rho_2^+\big(1-c_2(q)\big)\big(c_1(q)\rho_2^-
+\big(1-c_1(q)\big)\rho_1^-e^{(\rho_1^- - \rho_2^-) (a_1-a)}\big)e^{\rho_2^+ (b_2-a)}\nonumber\\
& \ \ \ \ \ \ \ \ \ \ \ \ \ \ \ \ \ \ \ \ \ \ \ \ \ \   -\rho_1^+\big(\big(1-c_1(q)c_2(q)\big)\rho_2^-  -c_2(q)\big(1-c_1(q)\big) \rho_1^- e^{(\rho_1^- -\rho_2^-) (a_1-a)} \big)e^{\rho_1^+ (b_2-a)}\Big)\nonumber\\
& \ \ +V_{b_1, a_1, b_2}(a_1)\Big(\rho_2^+ c_1(q)\big(1-c_2(q)\big) e^{\rho_2^+ (b_2-a)}
-\rho_1^+ \big(1-c_1(q) c_2(q)\big) e^{\rho_1^+(b_2-a)}\Big)\\
&=\frac{1-\rho_1^-V_{b_1, a_1, b_2}(a_1)}{\rho_2^- -\rho_1^- }
\Big(\rho_2^+\big(1-c_2(q)\big)\big(c_1(q) +(1-c_1(q))e^{(\rho_1^--\rho_2^-)(a_1-a)}\big)
e^{\rho_2^+ (b_2-a)}
\end{align*}
\begin{align*}
&\ \ \ \ \ \ \ \ \ \ \ \ \ \ \ \ \ \ \ \ \ \ \ \ \ \ \ \ \  -\rho_1^+\big((1-c_1(q)c_2(q)) -c_2(q) (1-c_1(q))e^{(\rho_1^--\rho_2^-)(a_1-a)}\big)e^{\rho_1^+ (b_2-a)}\Big)\nonumber\\
&=\frac{1-\rho_1^-V_{b_1, a_1, b_2}(a_1)}{\rho_2^- -\rho_1^-}
w_{b_2}(b_2, a_1) e^{-\rho_2^-(a_1-a)}.
\end{align*}

Next, we analyze the expression for $V_{b_1, a_1, b_2}''(a_1)$, under the condition that $V_{b_1,a_1, b_2}'(a_1)=1$ with $a_1\in[b_1, a)$. By substituting $\tilde{K}_{1}(b_1, a_1, b_2)$ with \eqref{fde33216} and $\tilde{K}_{2}(b_1, a_1, b_2)$ with \eqref{fde3311} in \eqref{vsecghj}, $V_{b_1, a_1, b_2}''(a_1)$ can be reformulated as
\begin{align*}
V_{b_1, a_1, b_2}''(a_1)
&=\frac{1-\rho_1^-V_{b_1, a_1, b_2}(a_1)}{\rho_2^- -\rho_1^-}
w_{b_2}(b_2, a_1) e^{-\rho_2^-(a_1-a)}\frac{{\rho_2^-}^2 e^{\rho_2^-(a_1-a)}}{w_{b_2}(b_2, a_1)}\\
&\ \  -\frac{1-\rho_2^-V_{b_1, a_1, b_2}(a_1)}{\rho_2^- -\rho_1^-}
w_{b_2}(b_2, a_1) e^{-\rho_1^-(a_1-a)} \frac{{\rho_1^-}^2 e^{\rho_1^-(a_1-a)}}{w_{b_2}(b_2, a_1)}\\
&=\frac{{\rho_2^-}^2\big(1-\rho_1^-V_{b_1, a_1, b_2}(a_1)\big)-{\rho_1^-}^2\big(1-\rho_2^-V_{b_1, a_1, b_2}(a_1)\big)}{\rho_2^- -\rho_1^-}\\
& = \rho_2^-+\rho_1^--\rho_1^- \rho_2^- V_{b_1, a_1, b_2}(a_1).
\end{align*}
Then \eqref{v2b12a1er} follows from $\rho_2^-+\rho_1^-= -2\mu_-$ and $\rho_1^- \rho_2^-=-2q$.
\end{proof}
\subsection{Proof of Lemma \ref{lem b1a1}}
\begin{proof}\label{b1a12}
In the proof we keep $0\leq b_1\leq a_1 < a< b_2$. If $V_{b_1,a_1, b_2}'(a_1)=1$ with $a_1\in[b_1, a)$, by \eqref{v2b12a1er} we have  $V_{b_1, a_1, b_2}''(a_1)> 0$ if and only if $V_{b_1, a_1, b_2}(a_1)> {\mu_-}/{q}$.

We first prove that $V_{b_1, a_1, b_2}''(a_1)\geq 0$ by verifying the above condition.
For $b_1=b_-\in(0,a_1)$, it follows from
  \eqref{val 1} and Lemma \ref{thm vbc2}  that $V_{b_-, a_1, b_2}(b_-)=V_{b_-}(b_-)={\mu_-}/{q}$ by \eqref{mulg}, and then
\begin{align*}
&V_{b_-, a_1, b_2}(a_1)> V_{b_-, a_1, b_2}(b_-)={\mu_-}/{q},
\end{align*}
which implies $V_{b_-, a_1, b_2}''(a_1)> 0$; for $b_1=b_-=a_1 \in(0, a)$, since $V_{b_-, b_-, b_2}(a_1)=V_{b_-, b_-, b_2}(b_-)={\mu_-}/{q}$, by \eqref{v2b12a1er} we have $V_{b_-, b_-, b_2}''(a_1)=0$; when $b_1=0$ for $b_-\leq0$, by Proposition \ref{b1min} (i) we have $\mu_-\leq0$, and then  we have  $V_{0, a_1, b_2}''(a_1)>0$ for $a_1\in(0, a)$ by \eqref{v2b12a1er}, and $V_{0, 0, b_2}''(a_1)=-2\mu_-\geq 0$ for
$a_1=0$.

We next show the monotonicity of $V_{b_1, a_1, b_2}'(x)$ for $x\in(a_1, a)$
by showing $V_{b_1, a_1, b_2}''(x)>0$.
Note that $w_{b_2}(b_2, a_1)>0$ by Proposition \ref{wb2b2a1u2}.
Under the condition $V_{b_1,a_1, b_2}'(a_1)=1$ for $a_1\in[b_1, a)$, by Lemma \ref{lem b1a125} we have that $\tilde{K}_{1}(b_1, a_1, b_2)$ can be either non-positive or positive, and
\begin{align}\label{fde331}
&\tilde{K}_{2}(b_1, a_1, b_2)>0.
\end{align}
If $\tilde{K}_{1}(b_1, a_1, b_2)\leq0$, then by \eqref{vsecde} and \eqref{fde331}, $V_{b_1, a_1, b_2}''(x)>0$ since it is the sum of two positive terms.
If $\tilde{K}_{1}(b_1, a_1, b_2)>0$,
observing that by \eqref{kkk111} and \eqref{kkk222},
\begin{align*}
&\tilde{K}_{2}(b_1, a_1, b_2)-\tilde{K}_{1}(b_1, a_1, b_2)\nonumber\\
&=\big(1-c_1(q)\big)e^{\rho_1^- (a_1-a)}- \big(1-c_1(q)\big) e^{\rho_2^- (a_1-a)}\nonumber\\
&\ \
+V_{b_1, a_1, b_2}(a_1)\Big(\rho_2^+ c_1(q)\big(1-c_2(q)\big) e^{\rho_2^+ (b_2-a)}
-\rho_1^+ \big(1-c_1(q) c_2(q)\big) e^{\rho_1^+(b_2-a)}\Big)\nonumber\\
& \ \ + V_{b_1, a_1, b_2}(a_1) \big(1-c_1(q)\big) \Big(\rho_2^+\big(1-c_2(q)\big) e^{\rho_2^+(b_2-a)}+\rho_1^+c_2(q) e^{\rho_1^+(b_2-a)}\Big)\nonumber\\
&=\big(1-c_{1}(q)\big)
\big(e^{\rho_1^-(a_1-a)}-e^{\rho_2^-(a_1-a)}\big)\nonumber\\
& \ \ +V_{b_1, a_1, b_2}(a_1)\big(1-c_{2}(q)\big)
\big(\rho_2^+e^{\rho_2^+(b_2-a)}-\rho_1^+e^{\rho_1^+(b_2-a)}\big)>0,
\end{align*}
then by \eqref{fde331} we have $\tilde{K}_{2}(b_1, a_1, b_2)>\tilde{K}_{1}(b_1, a_1, b_2)>0$ and
\begin{align*}
V_{b_1, a_1, b_2}'''(x)
&=\frac{\tilde{K}_{2}(b_1, a_1, b_2){\rho_2^-}^3e^{\rho_2^-(x-a)}-\tilde{K}_{1}(b_1, a_1, b_2){\rho_1^-}^3e^{\rho_1^-(x-a)}}{w_{b_2}(b_2, a_1)}\\
&>\frac{\tilde{K}_{1}(b_1, a_1, b_2)\big({\rho_2^-}^3e^{\rho_2^-(x-a)}-{\rho_1^-}^3e^{\rho_1^-(x-a)}\big)}{w_{b_2}(b_2, a_1)}>0,
\end{align*}
which implies that $V_{b_1, a_1, b_2}''(x)$ is strictly increasing in $x$ and satisfies $V_{b_1, a_1, b_2}''(x)> V_{b_1, a_1, b_2}''(a_1)\geq 0$.
\end{proof}
\subsection{Proof of Lemma \ref{lem k12122}}
\begin{proof}\label{lem k12222}
In the proof we assume that $0\leq b_1 < a < b_2$ and $0\leq b_1\leq a_1 < b_2$ and take $x\in[a_1, b_{2})\cap (a, b_2)$ unless stated otherwise, while using the fact that $w_{b_2}(b_2, a_1)>0$ as shown in Proposition \ref{wb2b2a1u2}.
By \eqref{vb1a1b222},
\begin{align}\label{vsecb2re}
& V_{b_1,a_1, b_2}''(b_2)=\frac{\hat{K}_{2}(b_1, a_1, b_2){\rho_2^+}^2e^{\rho_2^+(b_2-a)}-\hat{K}_{1}(b_1, a_1, b_2){\rho_1^+}^2e^{\rho_1^+(b_2-a)}}{w_{b_2}(b_2, a_1)}.
\end{align}
By \eqref{f11}, for $a_1<a$,
\begin{align*}
&\hat{K}_{2}(b_1, a_1, b_2)\\
&=
\big(1-c_{2}(q)\big)\Big(c_{1}(q)e^{\rho_2^-(a_1-a)}
+\big(1-c_{1}(q)\big)e^{\rho_1^-(a_1-a)}
-\rho_1^+e^{\rho_1^+(b_2-a)}V_{b_1, a_1, b_2}(a_1)\Big)\nonumber\\
&>0,\nonumber
\end{align*}
applying \eqref{certw}, and for $a_1\geq a$,
\begin{align*}
\hat{K}_{2}(b_1, a_1, b_2)&=\big(1-c_{2}(q)\big)\big(e^{\rho_1^+(a_1-a)}
-\rho_1^+e^{\rho_1^+(b_2-a)}V_{b_1, a_1, b_2}(a_1)\big)>0,
\end{align*}
since it is a product of two positive terms.

If there exists $b_2>(a_1 \vee a)$ such that $V_{b_1,a_1, b_2}''(b_2)=0$, then by \eqref{vsecb2re} we have
\begin{eqnarray*}\label{3b3}
&&b_{2}:=a+\frac{1}{\rho_2^+-\rho_1^+}\ln \hat{K}(b_1, a_{1}, b_{2}),
\end{eqnarray*}
where $\hat{K}(b_1, a_{1}, b_{2}):=
{\rho_1^+}^2\hat{K}_{1}(b_1, a_{1}, b_{2})/
\big({\rho_2^+}^2\hat{K}_{2}(b_1, a_{1}, b_{2})\big)$.
For $b_{2}>a$ we have $\hat{K}(b_1, a_{1}, b_{2})>1$, which combined with $\hat{K}_{2}(b_1, a_{1}, b_{2})>0$ leads to
\begin{align*}
&\hat{K}_{1}(b_1, a_{1}, b_{2}) >\frac{{\rho_2^+}^2}{{\rho_1^+}^2} \hat{K}_{2}(b_1, a_{1}, b_{2})>0.
\end{align*}

Next, we prove that $V_{b_1, a_1, b_2}'(x)$ is strictly decreasing  under the condition $V_{b_1,a_1, b_2}''(b_2)=0$ for $b_{2}>a$.
By \eqref{vb1a1b222},
\begin{align*}
V_{b_1, a_1, b_2}'''(x)&=
\frac{\hat{K}_{2}(b_1, a_1, b_2){\rho_2^+}^3e^{\rho_2^+(x-a)}-\hat{K}_{1}(b_1, a_1, b_2){\rho_1^+}^3e^{\rho_1^+(x-a)}}{w_{b_2}(b_2, a_1)}>0.
\end{align*}
Then $V_{b_1, a_1, b_2}''(x)$ is strictly increasing in $x$ and $V_{b_1, a_1, b_2}''(x)< V_{b_1, a_1, b_2}''(b_{2})=0$. Then $V_{b_1, a_1, b_2}'(x)$ is strictly decreasing and $V_{b_1, a_1, b_2}'(x)>V_{b_1, a_1, b_2}'(b_{2})$.
\end{proof}

\subsection{Proof of Remark \ref{orderv1v2}}
\begin{proof}\label{orderv1v200}
(i) By \eqref{val 1} and Lemma \ref{thm vbc2}, the comparison between barrier and band value functions on $[0,b_1)$ is determined by the location of $\arg\min W'$.
If the minimum of $W'$ is attained only at $a+$ or $b_+$, the corresponding barrier value strictly exceeds the band value on $[0,b_1)$, hence no $(b_1,a_1, b_2)$-band is optimal. Conversely, a necessary condition for band optimality is that $W'$ attain its minimum at one of $0+, b_-, a-$.

(ii) Since $\mathcal M\in\{0+, b_-, a-\}$, it follows directly that $V_{b_1}(x)>\max\{V_{a+}(x),V_{b_+}(x)\}$.
Assume $V_{b_1,a_1, b_2}'(a_1)=1$ for some $a_1\in[b_1, a)$ and consider $x\in(a_1, a)$. By Lemma \ref{lem b1a1} we have $V_{b_1, a_1, b_2}'(x)$ is strictly increasing, and then, $V_{b_1, a_1, b_2}'(x)> V_{b_1,a_1, b_2}'(a_1)=1$.
From \eqref{val 1}, we also know that $V_{b_1}'(x)=1$ for all $x\in(a_1, a)$. Consequently, $V_{b_1, a_1, b_2}'(x)- V_{b_1}'(x)>0$. Integrating over $(a_1, x)$ and using $V_{b_1, a_1, b_2}(a_1)=V_{b_1}(a_1)$ yields $V_{b_1, a_1, b_2}(x)> V_{b_1}(x)$ for every $x\in(a_1, a)$.

(iii) We omit the proof of (iii), which follows from Lemma \ref{lem k12122} together with $V_{b_1,a_1, b_2}'(b_2)=1$ (proved in Proposition \ref{va1vb21kl}) by the same argument as in (ii).
\end{proof}

\subsection{Proof of Lemma \ref{thm vbb}}
\begin{proof}\label{thm vbbproof}
For $a \leq a_1$,  $V_{b_1, a_1, b_2}(x)$ is a special case of Lemma \ref{thm vbc2}, and we focus on the case $x \geq a_1$. For $x\in[a_1, b_2)$, by \eqref{wb2b1fu3} and
\begin{align*}
&w(x, a_1)= \big(1-c_{2}(q)\big)e^{(\rho_2^++\rho_1^+)(a_1-a)}
\big(e^{\rho_2^+(x-a_1)}- e^{\rho_1^+(x-a_1)}\big),\\
&w_{b_2}(b_2, x)= \big(1-c_{2}(q)\big)e^{(\rho_2^++\rho_1^+)(x-a)}
\big(\rho_2^+e^{\rho_2^+(b_2-x)}-\rho_1^+ e^{\rho_1^+(b_2-x)}\big),
\end{align*}
 applying $W_+(x)$ given in \eqref{w+xeft}, we have
\begin{align*}
&\frac{w(x, a_1)}{w_{b_2}(b_2, a_1)}=\frac{e^{\rho_2^+(x-a_1)}- e^{\rho_1^+(x-a_1)} }
{\rho_2^+ e^{\rho_2^+(b_2-a_1)}-\rho_1^+ e^{\rho_1^+(b_2-a_1)}}=\frac{W_+(x-a_1)}{W_+'(b_2-a_1)},\\
&\frac{w_{b_2}(b_2, x)}{w_{b_2}(b_2, a_1)}
=\frac{e^{(\rho_2^++\rho_1^+)(x-a)}
\big(\rho_2^+e^{\rho_2^+(b_2-x)}-\rho_1^+ e^{\rho_1^+(b_2-x)}\big)}
{e^{(\rho_2^++\rho_1^+)(a_1-a)}
\big(\rho_2^+ e^{\rho_2^+(b_2-a_1)}-\rho_1^+ e^{\rho_1^+(b_2-a_1)}\big)}=\frac{W_+'(b_2-x)e^{(\rho_2^++\rho_1^+)(x-a_1)}}{W_+'(b_2-a_1)}.
\end{align*}
In particular, at $x=b_2$, we obtain the following result
\begin{align*}
&\frac{w(b_2, a_1)}{w_{b_2}(b_2, a_1)}=\frac{W_+(b_2-a_1)}{W_+'(b_2-a_1)} \ \ \ \mathrm{and} \ \
\frac{w_{b_2}(b_2, b_2 )}{w_{b_2}(b_2, a_1)}
=\frac{W_+'(0)e^{(\rho_2^++\rho_1^+)(b_2-a_1)}}{W_+'(b_2-a_1)}.
\end{align*}
Substituting these results into Lemma \ref{thm vbc2} yields Lemma \ref{thm vbb}.
\end{proof}
\subsection{Proof of Lemma \ref{lem s11}}
\begin{proof}\label{s11}
Fix $0\leq b_1<a<b_2$. By Proposition \ref{va1vb21kl}, we have $V_{b_1, a, b_2}'(b_2)=1$. If $V_{b_1,a, b_2}''(b_2)=0$, then by Lemma \ref{lem k12122} we have $V_{b_1,a_1, b_2}'(x)$ is strictly decreasing for $x\in (a, b_2)$, and then $V_{b_1, a, b_2}'(a+)> V_{b_1,a, b_2}'(b_2)=1$. It follows that $S'(\beta)= V_{b_1, a, b_2}'(a+)+1> 2$, and then, $S(\beta)$ is strictly increasing for $\beta\in(-1,1)$.

Since $\lim_{\beta\downarrow -1}S(\beta)=-2<0$ and $S(0)=V_{b_1, a, b_2}'(a+)-1>0$, there exists a unique $\beta^*\in(-1,0)$,  given by \eqref{al2}, satisfying $S(\beta^*)=0$ and $S(\beta)\leq 0$ for $\beta\in(-1, \beta^*]$.
\end{proof}

\subsection{Proof of Remark \ref{prop:decision}}
\begin{proof}\label{prop:decision00}
For $\mathcal{M}\in\{a+,\,b_+\}$, the conclusion follows directly from Theorems \ref{thm va+}-\ref{thm vb2}.
Throughout $(C1)$-$(C3)$ we assume $\mathcal M\in\{0+,\,b_-,\,a-\}$. The justifications are:
$(C1)\text{-(i)}$ from Theorem \ref{thm vbb5} together with Remark \ref{orderv1v2}-(ii); \((C1)\text{-(ii)}\) from Theorems \ref{thm v0+}-\ref{thm va-};
\((C2)\text{-(i)}\) with \(V''_{b_1,a_1,b_2}(b_2)=0\) from Theorems \ref{thm vabb51} and Remark \ref{orderv1v2}-(iii);
\((C2)\text{-(i)}\) with \(V''_{b_1,a_1,b_2}(b_2)\neq 0\) and \((C2)\text{-(ii)}\) from Theorems \ref{thm v0+}-\ref{thm va-};
and \((C3)\) from Remark \ref{rem52}.
\end{proof}

\end{appendices}


\bibliographystyle{plain}
\bibliography{bibliography}


\end{document}